\title{Algebraic dependence and Milnor K-theory}
\author{Adam Topaz}
\address{
    Mathematical and Statistical Sciences \\
    University of Alberta \\
    632 Central Academic Building \\
    Edmonton AB T6G 2G1 \\
    Canada }
\subjclass{12G99, 12J10, 19D45}
\email{topaz@ualberta.ca}
\urladdr{\url{https://adamtopaz.com/}}
\date{\today}
\thanks{Research supported by the author's NSERC discovery grant.}
\keywords{Algebraic dependence, Milnor K-theory, valuation theory, anabelian geometry}
\date{Last updated: \today}
\newcommand{\cl}{\operatorname{cl}}
\newcommand{\Sup}{\operatorname{Sup}}
\newcommand{\KM}{\operatorname{K}^\mathrm{M}}
\newcommand{\T}{\operatorname{T}}
\newcommand{\Zbb}{\mathbb{Z}}
\newcommand{\Qbb}{\mathbb{Q}}
\newcommand{\Cbb}{\mathbb{C}}
\newcommand{\Gbb}{\mathbb{G}}
\newcommand{\smin}{\smallsetminus}
\newcommand{\trdeg}{\operatorname{trdeg}}
\newcommand{\acl}{\operatorname{acl}}
\newcommand{\Hom}{\operatorname{Hom}}
\newcommand{\Gal}{\operatorname{Gal}}
\newcommand{\Urm}{\mathrm{U}}
\newcommand{\Ocal}{\mathcal{O}}
\newcommand{\Bcal}{\mathcal{B}}
\newcommand{\mfrak}{\mathfrak{m}}
\newcommand{\Ascr}{\mathscr{A}}
\newcommand{\Vscr}{\mathscr{V}}
\newcommand{\Zcal}{\mathcal{Z}}
\newcommand{\Ccal}{\mathcal{C}}
\newcommand{\Gcal}{\mathcal{G}}
\newcommand{\Rcal}{\mathcal{R}}
\newcommand{\Dcal}{\mathcal{D}}
\newcommand{\Ical}{\mathcal{I}}
\newcommand{\Ecal}{\mathcal{E}}
\newcommand{\Hcal}{\mathcal{H}}
\newcommand{\Ucal}{\mathcal{U}}
\newcommand{\Kcal}{\mathcal{K}}
\renewcommand{\tilde}{\widetilde}
\renewcommand{\bar}{\overline}
\renewcommand{\epsilon}{\varepsilon}
\renewcommand{\phi}{\varphi}
\newtheorem{theorem}{Theorem}[section]
\newtheorem{lemma}[theorem]{Lemma}
\newtheorem{proposition}[theorem]{Proposition}
\newtheorem{fact}[theorem]{Fact}
\newtheorem*{maintheorem*}{Main Theorem}
\theoremstyle{definition}
\begin{document}
\maketitle

\begin{abstract}
  This paper shows that algebraic (in)dependence is encoded in Milnor K-theory of fields.
  As an application, we show that the isomorphism type of a field is determined by its Milnor K-theory, up to purely inseparable extensions, in most situations.
\end{abstract}

\setcounter{tocdepth}{1}
\tableofcontents

\section{Introduction}

Let $K$ be a field.
The Milnor K-theory of $K$ has a very simple definition:
\[ \KM_{*}(K) := \frac{\T_{*}(K^{\times})}{ \langle x \otimes y \ | \ x + y = 1 \rangle } \]
where $\T_{*}(K^{\times})$ denotes the tensor algebra of the $\Zbb$-module $K^{\times}$, and the two-sided ideal $\langle x \otimes y \ | \ x + y = 1 \rangle$ consists of the so-called \emph{Steinberg relations}.

In degree one, we have the multiplicative group, $\KM_{1}(K) = K^{\times}$, while the ring structure of $\KM_{*}(K)$ involves the additive structure of $K$ as well.
It is natural to ask whether the \emph{field} $K$ itself is determined (up-to isomorphism) by $\KM_{*}(K)$.
This question was considered in~\cite{zbMATH05635168,zbMATH07463742}, focusing mostly on finitely-generated field extensions and eventually relying on the so-called \emph{fundamental theorem of projective geometry} to reconstruct the fields in question.

In this paper, we investigate this question for fields which do not necessarily satisfy any finiteness conditions, and we obtain the following main result.
\begin{maintheorem*}
  Let $K$ be any field whose absolute transcendence degree is at least $5$.
  Then the isomorphism type of $K$ is determined, up to purely inseparable extensions, by the $\Qbb$-algebra $\Qbb \otimes_{\Zbb} \KM_{*}(K)$.
\end{maintheorem*}
See Theorem~\ref{theorem:main_absolute_Kcal} for the precise statement.
We also prove a similar \emph{relative} result for relatively algebraically closed field extensions of sufficiently large transcendence degree in Theorem~\ref{theorem:main_relative_Kcal}.
Note that the theorem above imposes no additional restrictions on $K$ besides the bound on the absolute transcendence degree.
For example, this theorem applies to any sufficiently large \emph{algebraically closed} field.

Since we wish to work with fields whose multiplicative group may even be \emph{divisible}, it is important to work with $\Qbb \otimes_{\Zbb} \KM_{*}(K)$ as opposed to $\KM_{*}(K)$ itself.
More precisely, if $K$ is radically closed, then the quotient $K^{\times}/\mathrm{torsion}$ is already a $\Qbb$-vector space, and thus $\Qbb^{\times}$ provides a source of indeterminacy for $\KM_{*}(K)/\langle \text{torsion} \rangle$.
Namely, any $r \in \Qbb^{\times}$ yields an automorphism of $\KM_{*}(K)/\langle \mathrm{torsion} \rangle$ defined by $t \mapsto t^{r}$ in degree one, and such automorphisms only arise from field theory if $r$ is a power of the characteristic of $K$.
By tensoring $\KM_{*}(K)$ with $\Qbb$, we can control for such indeterminacies in our main result.

Furthermore, if $K$ is any field and $K \to K^{i}$ denotes the perfection of $K$, then the corresponding map
\[ \KM_{*}(K) \to \KM_{*}(K^{i}) \]
induces an \emph{isomorphism} after tensoring with $\Qbb$.
Thus, inseparability is an additional source of indeterminacy which must be accounted for, hence we can only expect to recover the isomorphism type of $K$ up to purely inseparable extensions when working with $\Qbb \otimes_{\Zbb} \KM_{*}(K)$.

The technical core of this work is in recovering all the information about \emph{algebraic dependence} from Milnor K-theory, see Theorem~\ref{theorem:recover_Kcal_geom_lattice}.
Once we obtain all information about algebraic dependence, our reconstruction results will follow by applying a distant cousin of the fundamental theorem of projective geometry, due to Evans-Hrushovski~\cite{zbMATH00007333,zbMATH00839199} and Gismatullin~\cite{zbMATH05350217}, based on the \emph{group-configuration theorem}.
In the case where the fields (or field extensions) in question are finitely-generated, we can instead use one of the main theorems from~\cite{zbMATH07463742} to obtain better reconstruction results.
For example, Theorem~\ref{theorem:fg_case_absolute} (which uses~\cite[Theorem 4]{zbMATH07463742} in an essential way) shows that the isomorphism type of a finitely-generated field $K$ of absolute transcendence degree $\geq 2$ is determined by $\KM_{*}(K)$ with no need to pass to inseparable extensions.

\section{Notation and preliminaries}

We will primarily work with a fixed field denoted by $K$.
In some cases we will also consider subfields of $K$, usually denoted $k$.

\subsection{Quotients of Milnor K-theory}
For a subgroup $T$ of $K^{\times}$, we write
\[ \KM_{*}(K|T) := \frac{\KM_{*}(K)}{\langle T \rangle} \]
where $\langle T \rangle$ refers to the (two-sided) ideal of $\KM_{*}(K)$ generated by $T \subset K^{\times} = \KM_{1}(K)$.
If $T$ is trivial, then we omit it from the notation to match the standard notation for Milnor K-theory: $\KM_{*}(K) := \KM_{*}(K|\{1\})$.
In the case where $T = k^{\times}$ for a subfield $k$ of $K$, we write $\KM_{*}(K|k)$ instead of $\KM_{*}(K|k^{\times})$.

As usual, we will use the notation $\{f_{1},\ldots,f_{n}\}$ to denote the product of $f_{1},\ldots,f_{n} \in K^{\times} = \KM_{1}(K)$ in $\KM_{n}(K)$, and such elements of $\KM_{n}(K)$ will be called \emph{symbols}.
A similar notation and terminology will also be used for the variants of $\KM_{*}(K)$ we consider in this paper, while ensuring that the variant being considered is clear from context.

\subsection{Duality}

For a subgroup $T$ of $K^{\times}$, we write
\[ \Kcal_{K|T} := \Qbb \otimes_{\Zbb}(K^{\times}/T). \]
If $T = \{1\}$, then we write $\Kcal_{K}$ instead of $\Kcal_{K|T}$, and if $T = k^{\times}$ for a subfield $k$ of $K$, we will write $\Kcal_{K|k}$ instead of $\Kcal_{K|k^{\times}}$.
The operation of $\Kcal_{K|T}$ will always be written \emph{additively}.

We will consider the dual
\[ \Gcal_{K} := \Hom_{\Zbb}(K^{\times},\Qbb) = \Hom_{\Qbb}(\Kcal_{K},\Qbb) \]
as a topological vector space with respect to the weak topology, where $\Qbb$ is given the discrete topology.
We have an obvious perfect pairing
\[ K^{\times} \times \Gcal_{K} \to \Qbb. \]
For a subspace $\Hcal$ of $\Gcal_{K}$, we write $\Hcal^{\perp} \subset K^{\times}$ for the orthogonal of $\Hcal$ with respect to this pairing.
For a subgroup $T$ of $K^{\times}$, we will use the notation $\Gcal_{K|T} \subset \Gcal_{K}$ for the orthogonal of $T$ with respect to this pairing.
As always, if $T = k^{\times}$ for a subfield $k$ of $K$, we write $\Gcal_{K|k}$ instead of $\Gcal_{K|k^{\times}}$.
When $T = \{1\}$ is trivial, one has $\Gcal_{K|T} = \Gcal_{K}$, so our convention of omitting $T$ from the notation in this case still works.

A subgroup $T$ of an abelian group $A$ will be called \emph{saturated} if $A/T$ is torsion-free.
For any subgroup $T$ of $K^{\times}$, the subspace $\Gcal_{K|T}$ is closed, and for a subspace $\Hcal \subset \Gcal_{K}$, the subgroup $\Hcal^{\perp}$ is saturated.
In fact, if $T$ is any subgroup of $K^{\times}$, then $\Gcal_{K|T}^{\perp}$ is the saturation of $T$ (i.e.~the smallest saturated subgroup containing $T$) and if $\Hcal \subset \Gcal_{K}$ is any subspace then $\Gcal_{K|\Hcal^{\perp}}$ is the closure of $\Hcal$.

The maps $\Hcal \mapsto \Hcal^{\perp}$ and $T \mapsto \Gcal_{K|T}$ provide a one-to-one order-reversing correspondence between the closed subspaces of $\Gcal_{K}$ and the saturated subgroups of $K^{\times}$.
We also have canonical perfect pairing
\[ \Kcal_{K|T} \times \Gcal_{K|T} \to \Qbb \]
associated to any subgroup $T$ of $K^{\times}$.
We will say so explicitly when considering orthogonals with respect to this pairings to avoid any potential confusion with the notation $(-)^{\perp}$ introduced above.

The base-change $\Qbb \otimes_{\Zbb} \KM_{*}(K|T)$ will be denoted by $\Kcal_{*}(K|T)$.
As usual, if $T = \{1\}$ then we write $\Kcal_{*}(K)$ instead of $\Kcal_{*}(K|\{1\})$ and if $T = k^{\times}$ for a subfield $k$ of $K$, then we write $\Kcal_{*}(K|k)$ instead of $\Kcal_{*}(K|k^{\times})$.
Note that for any subgroup $T$ of $K^{\times}$, one has $\Kcal_{1}(K|T) = \Kcal_{K|T}$.

\subsection{Alternating pairs}

Elements of $\Gcal_{K}$ will be considered both as $\Zbb$-linear maps $K^{\times} \to \Qbb$ and as $\Qbb$-linear maps $\Kcal_{K} \to \Qbb$.
If $f \in \Gcal_{K}$ with $T = \ker(f) \subset K^{\times}$, then we may also consider $f$ as a $\Zbb$-linear map $K^{\times}/T \to \Qbb$ and as a $\Qbb$-linear map $\Kcal_{K|T} \to \Qbb$.

A pair of elements $f,g \in \Gcal_{K}$ will be called an \emph{alternating pair} provided that
\[ f(x) \cdot g(y) = f(y) \cdot g(x) \]
whenever $x,y \in K^{\times}$ satisfy $x + y = 1$ in $K$.
We denote the associated binary relation on $\Gcal_{K}$ by $\Rcal_{K}$:
\[ \Rcal_{K}(f,g) \Longleftrightarrow \text{$f,g$ are an alternating pair.} \]
In fact, for the majority of this paper we will be working with the structure consisting of the following data (associated to various subgroups $T$ of $K^{\times}$), which we abbreviate as $\Ascr(K|T)$:
\begin{enumerate}
  \item The $\Qbb$-vector space $\Kcal_{K|T}$.
  \item The topological $\Qbb$-vector space $\Gcal_{K|T}$.
  \item The canonical pairing $\Kcal_{K|T} \times \Gcal_{K|T} \to \Qbb$.
  \item The restriction of the relation $\Rcal_{K}$ to $\Gcal_{K|T}$.
\end{enumerate}
As before, we write $\Ascr(K)$ instead of $\Ascr(K|T)$ if $T$ is trivial and $\Ascr(K|k)$ instead of $\Ascr(K|k^{\times})$ when $k$ is a subfield of $K$.

Recall that the Steinberg relations in Milnor K-theory are generated by basic tensors of the form $x \otimes y$ for $x,y \in K^{\times}$ satisfying $x + y = 1$.
Thus, the alternating condition for pairs of elements of $\Gcal_{K}$ can be tested using the product in Milnor K-theory, as the following fact summarizes.
\begin{fact}\label{fact:alternating_iff_milnor_condition}
  Let $f,g \in \Gcal_{K}$ be given and let $T \subset (\Qbb \cdot f + \Qbb \cdot g)^{\perp}$ be any subgroup.
  The following are equivalent:
  \begin{enumerate}
    \item For all $x,y \in K^{\times}/T = \KM_{1}(K|T)$ satisfying $\{x,y\} = 0$ in $\KM_{2}(K|T)$, one has
          \[ f(x) \cdot g(y) = f(y) \cdot g(x). \]
    \item For all $x,y \in \Kcal_{K|T} = \Kcal_{1}(K|T)$ satisfying $\{x,y\} = 0$ in $\Kcal_{2}(K|T)$, one has
          \[ f(x) \cdot g(y) = f(y) \cdot g(x). \]
    \item $f,g$ are an alternating pair.
  \end{enumerate}
\end{fact}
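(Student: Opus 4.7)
The strategy is to close the cycle $(2) \Rightarrow (1) \Rightarrow (3) \Rightarrow (2)$. The first two implications are essentially formal, and the substance is in $(3) \Rightarrow (2)$.

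For $(2) \Rightarrow (1)$, the natural base-change map $\KM_2(K|T) \to \Kcal_2(K|T)$ sends every vanishing symbol to a vanishing symbol, so this direction is immediate. For $(1) \Rightarrow (3)$, given $x, y \in K^\times$ with $x + y = 1$, the symbol $\{x,y\}$ already vanishes in $\KM_2(K)$ by definition, hence also in the further quotient $\KM_2(K|T)$. Applying (1) to the classes of $x$ and $y$ in $K^\times/T = \KM_1(K|T)$ then yields the alternating identity.

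The heart of the argument is $(3) \Rightarrow (2)$, which I would prove by packaging the alternating condition into a single $\Qbb$-linear functional on $\Kcal_2(K|T)$. First define the $\Zbb$-bilinear map
\[
  \omega_0 \colon K^\times \times K^\times \to \Qbb, \qquad (x,y) \mapsto f(x) g(y) - f(y) g(x).
\]
This map is tautologically alternating on the diagonal. The hypothesis $T \subset (\Qbb f + \Qbb g)^{\perp}$ forces both $f$ and $g$ to vanish on $T$, so $\omega_0$ vanishes whenever either argument lies in $T$; and assumption (3) says exactly that $\omega_0$ annihilates every Steinberg pair $(x,y)$ with $x+y=1$. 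These three properties together mean $\omega_0$ descends to a $\Zbb$-linear map $\KM_2(K|T) \to \Qbb$, and tensoring with $\Qbb$ produces a $\Qbb$-linear functional $\omega \colon \Kcal_2(K|T) \to \Qbb$.

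By construction, $\omega$ sends a symbol $\{u,v\}$ with $u, v \in K^\times/T$ to $f(u) g(v) - f(v) g(u)$. Since $f$ and $g$ extend $\Qbb$-linearly to $\Kcal_{K|T}$, and since symbols are $\Qbb$-bilinear in their entries, the same formula persists for all $u, v \in \Kcal_{K|T}$. Thus $\{x, y\} = 0$ in $\Kcal_2(K|T)$ immediately gives $f(x) g(y) - f(y) g(x) = \omega(\{x,y\}) = 0$, establishing (2). The only bookkeeping required is checking that $\omega_0$ factors cleanly through the defining relations of $\KM_2(K|T)$ and that base-change to $\Qbb$ commutes with the symbol notation; neither presents a genuine obstacle, so I expect no serious difficulty anywhere in the argument.
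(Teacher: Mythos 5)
Your proof is correct and follows exactly the reasoning the paper indicates: the bilinear form $(x,y) \mapsto f(x)g(y) - f(y)g(x)$ annihilates the Steinberg generators (by (3)) and the degree-two part of the ideal generated by $T$ (since $T \subset \ker f \cap \ker g$), hence descends to a functional on $\KM_2(K|T)$ and then on $\Kcal_2(K|T)$ after base change, from which both (1) and (2) follow. The paper records this as an unproved Fact preceded by a one-sentence remark about the Steinberg relations being generated by basic tensors; your argument simply writes out those details and is in agreement with the intended approach.
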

In particular, this shows that the data $\Ascr(K|T)$ is \emph{completely determined} (in a functorial manner) by the algebra $\Kcal_{*}(K|T)$.
Indeed, $\Gcal_{K|T}$ is the (weak) dual of $\Kcal_{K|T} = \Kcal_{1}(K|T)$, and the fact above shows that for $f,g \in \Gcal_{K|T}$ one has $\Rcal_{K}(f,g)$ if and only if for all $x,y \in \Kcal_{1}(K|T)$ such that $\{x,y\} = 0$ in $\Kcal_{2}(K|T)$, one has $f(x) \cdot g(y) = f(y) \cdot g(x)$.

We will borrow some notation and terminology from group theory by considering $\Rcal_{K}(-,-)$ as being analogous to the condition that two elements of a group commute.
Namely, for a closed subspace $\Hcal$ of $\Gcal_{K}$ we consider the following (closed) subspaces of $\Gcal_{K}$:
\begin{enumerate}
  \item $\Ccal_{K}(\Hcal) := \{f \in \Gcal_{K} \ | \ \forall g \in \Hcal, \ \Rcal_{K}(f,g)\}$, the $\Rcal_{K}$-\emph{centralizer} of $\Hcal$.
  \item $\Zcal_{K}(\Hcal) := \{f \in \Hcal \ | \ \forall g \in \Hcal, \ \Rcal_{K}(f,g)\}$, the $\Rcal_{K}$-\emph{centre} of $\Hcal$.
\end{enumerate}

\subsection{Valuations}

Valuations on $K$ will only be considered up-to equivalence.
Let $v,w$ be two valuations.
Our convention is that $v \le w$ means $v$ is a \emph{coarsening} of $w$.

Let $v$ be a valuation of $K$.
We shall write $\Ocal_{v}$ for the valuation ring of $v$, $\mfrak_{v}$ for the maximal ideal of $\Ocal_{v}$, $Kv$ for the residue field of $v$ and $vK$ for the value group of $v$.
The unit group $\Ocal_{v}^{\times}$ will be denoted by $\Urm_{v}$ and the principal unit group $1 + \mfrak_{v}$ will be denoted by $\Urm_{v}^{1}$.
If $k$ is a subfield of $K$, then we write $kv$ and $vk$ for the residue field and value group of the restriction of $v$ to $k$.

We define:
\[ \Ical_{v} := \Gcal_{K|\Urm_{v}}, \ \ \Dcal_{v} := \Gcal_{K|\Urm_{v}^{1}}. \]
Note that $\Ical_{v} \subset \Dcal_{v}$ and that the exact sequence
\[ 1 \to Kv^{\times} \to K^{\times}/\Urm_{v}^{1} \to vK \to 1 \]
dualizes to an exact sequence
\[ 0 \to \Ical_{v} \to \Dcal_{v} \to \Gcal_{Kv} \to 0. \]

For a subgroup $T$ of $K^{\times}$, write $Tv$ for the image of $T \cap \Urm_{v}$ in $Kv^{\times}$ and $vT$ for the image of $T$ in $vK$.
We have an exact sequence
\[ 1 \to Kv^{\times}/Tv \to K^{\times}/(T \cdot \Urm_{v}^{1}) \to vK/vT \to 1 \]
which dualizes to an exact sequence of the form
\[ 0 \to \Gcal_{K|T} \cap \Ical_{v} \to \Gcal_{K|T} \cap \Dcal_{v} \to \Gcal_{Kv|Tv} \to 0. \]
In the special case where $T = k^{\times}$ for a subfield $k$ of $K$, our notational conventions are compatible.
Namely, the natural exact sequence
\[ 1 \to Kv^{\times}/kv^{\times} \to K^{\times}/(k^{\times} \cdot \Urm_{v}^{1}) \to vK/vk \to 1 \]
dualizes to an exact sequence
\[ 0 \to \Gcal_{K|k} \cap \Ical_{v} \to \Gcal_{K|k} \cap \Dcal_{v} \to \Gcal_{Kv|kv} \to 0. \]

The subspace of $\Kcal_{K}$ generated by the image of $\Urm_{v}^{1}$ will be denoted by $\Ucal_{v}^{1}$ and the subspace generated by the image of $\Urm_{v}$ will be denoted by $\Ucal_{v}$.
Similarly, if $T$ is a subgroup of $K^{\times}$, we write $\Ucal_{v|T}^{1}$ for the image of $\Ucal_{v}^{1}$ in $\Kcal_{K|T}$ and $\Ucal_{v|T}$ for the image of $\Ucal_{v}$.
As always, when $T = k^{\times}$, we write $\Ucal_{v|k}$ and $\Ucal_{v|k}^{1}$ instead of $\Ucal_{v|k^{\times}}$ and $\Ucal_{v|k^{\times}}^{1}$.

Note that $\Ucal_{v}$ resp.~$\Ucal_{v}^{1}$ is the orthogonal of $\Ical_{v}$ resp.~$\Dcal_{v}$ with respect to the pairing $\Kcal_{K} \times \Gcal_{K} \to \Qbb$.
Similarly, $\Ucal_{v|T}$ resp.~$\Ucal_{v|T}^{1}$ is the orthogonal of $\Gcal_{K|T} \cap \Ical_{v}$ resp.~$\Gcal_{K|T} \cap \Dcal_{v}$ with respect to the pairing $\Kcal_{K|T} \times \Gcal_{K|T} \to \Qbb$.

\section{The local theory}

Our starting point is the following fundamental result.
\begin{theorem}\label{theorem:main_acl}
  Let $K$ be any field, and let $\Dcal$ be a closed subspace of $\Gcal_{K}$.
  The following are equivalent:
  \begin{enumerate}
    \item For all $f,g \in \Dcal$, one has $\Rcal_{K}(f,g)$.
    \item There exists a valuation $v$ of $K$ and a closed subspace $\Ical \subset \Dcal$ of codimension $\le 1$, such that $\Dcal \subset \Dcal_{v}$ and $\Ical \subset \Ical_{v}$.
  \end{enumerate}
\end{theorem}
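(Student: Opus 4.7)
The direction (2)$\Rightarrow$(1) is a direct computation. Given any $f,g \in \Dcal$, the codimension-$\le 1$ hypothesis together with the $\Qbb$-bilinearity and symmetry of $\Rcal_K$ let us replace $(f,g)$ by a pair $(f,g')$ with $g' \in \Ical \subset \Ical_v$ and $f \in \Dcal_v$, without changing the alternating-pair relation. For $x,y \in K^{\times}$ with $x+y=1$, the ultrametric inequality leaves the following four cases, in each of which $f(x)g'(y)=f(y)g'(x)$ is immediate: both $v(x)=v(y)=0$ (so $g'$ kills both $x$ and $y$); $v(x)>0$ (so $y \in \Urm_v^1$ and hence $f(y)=g'(y)=0$); the symmetric case $v(y)>0$; and $v(x)=v(y)<0$, where $-x/y \in \Urm_v^1$, yielding $f(x)=f(y)$ and $g'(x)=g'(y)$ once one notes that $f(-1)=g'(-1)=0$ because $\Qbb$ is torsion-free.

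For (1)$\Rightarrow$(2), the cases $\dim_\Qbb \Dcal \le 1$ are handled trivially (take $v$ trivial and $\Ical = 0$). For $\dim_\Qbb \Dcal \ge 2$, the approach has two stages: extract a valuation from a single alternating pair, then globalize across all of $\Dcal$. For a $\Qbb$-linearly independent pair $f,g \in \Dcal$, the alternating-pair condition is precisely the statement that the map $\varphi\colon K \smin \{0,1\} \to \Pbb^1(\Qbb)$, $x \mapsto [f(x):g(x)]$, satisfies $\varphi(x)=\varphi(1-x)$; combined with the multiplicativity $\varphi(x^n)=\varphi(x)$, this is a strong rigidity condition. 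The Bogomolov--Pop style rigidity argument, in its Milnor K-theoretic form, produces from this data a valuation $v$ on $K$ together with a line $L \subset \Qbb f + \Qbb g$ such that $L \subset \Ical_v$ and $\Qbb f + \Qbb g \subset \Dcal_v$. This is the technical heart of the argument.

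To globalize, I would consider the collection of valuations $v$ of $K$ satisfying $\Dcal \subset \Dcal_v$; by the previous step applied to any alternating pair in $\Dcal$, this collection is nonempty, and a Zorn argument on the specialization ordering yields a maximal (``finest'') such $v$. Setting $\Ical := \Dcal \cap \Ical_v$, which is automatically closed of some codimension in $\Dcal$, the quotient $\Dcal/\Ical$ embeds in $\Gcal_{Kv}$ via the exact sequence $0 \to \Ical_v \to \Dcal_v \to \Gcal_{Kv} \to 0$, and the alternating-pair condition on $\Dcal$ descends to its image. If $\dim_\Qbb \Dcal/\Ical \ge 2$, the pair case applied in $\Gcal_{Kv}$ produces a nontrivial valuation of $Kv$, which lifts to a proper refinement of $v$ still satisfying $\Dcal \subset \Dcal_v$, contradicting maximality. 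Thus $\dim_\Qbb \Dcal/\Ical \le 1$, as required. The main obstacle is the pair-case rigidity argument: the explicit construction of a valuation ring on $K$ from the Steinberg constraints on a single alternating pair is delicate, and forms the genuine technical core of the ``local theory''.
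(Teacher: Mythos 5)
The paper does not supply a proof of this theorem at all: it cites the Galois-theoretic antecedents (Bogomolov, Bogomolov--Tschinkel, Efrat, Koenigsmann, Engler--Koenigsmann, and the author's earlier work) together with a machine-verified \texttt{Lean3} proof, and explicitly omits the argument. There is therefore no in-paper proof to compare against; your proposal must be judged on its own. Your $(2)\Rightarrow(1)$ direction is correct and complete: the reduction to a pair $(f,g')$ with $g' \in \Ical$ uses only linearity of the form $(f,g)\mapsto f(x)g(y)-f(y)g(x)$ and its vanishing on the diagonal, and the four ultrametric cases (including $f(-1)=g'(-1)=0$ from torsion-freeness of $\Qbb$) are exactly the computation the paper later isolates inside Lemma~\ref{lemma:Rcal_residue_compat}.

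Your $(1)\Rightarrow(2)$ direction has two gaps. The first you flag yourself: the construction of a valuation from a single alternating pair is invoked as rigidity with no argument, and this is the entire nontrivial content of the theorem. The second is in the globalization and is genuine even granting the pair case. When $v$ is Zorn-maximal with $\Dcal\subset\Dcal_v$ and $\dim_\Qbb(\Dcal/\Ical)\ge 2$, applying the pair case inside $\Gcal_{Kv}$ to a linearly independent pair $\bar f,\bar g$ only yields a valuation $\bar w$ of $Kv$ with $\Qbb\bar f+\Qbb\bar g\subset\Dcal_{\bar w}$; it does \emph{not} follow that the full image $\bar\Dcal$ of $\Dcal$ lies in $\Dcal_{\bar w}$, so the composed valuation $w$ on $K$ need not satisfy $\Dcal\subset\Dcal_w$, and maximality is not contradicted. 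The missing step is the centralizer mechanism the paper develops separately: the pair case also supplies a nontrivial line $\bar L\subset\Qbb\bar f+\Qbb\bar g\subset\bar\Dcal$ with $\bar L\subset\Ical_{\bar w}$, so $\bar L$ is valuative; since $\bar\Dcal$ is alternating one has $\bar\Dcal\subset\Ccal_{Kv}(\bar L)$, and by the argument of Lemma~\ref{lemma:centralizer_valuative} this centralizer equals $\Dcal_{v_{\bar L}}$ for the nontrivial minimal valuation $v_{\bar L}$ of Lemma~\ref{lemma:main-valuative}. Composing $v_{\bar L}$ with $v$ then does yield a proper refinement with $\Dcal\subset\Dcal_w$, giving the contradiction. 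Without this extra observation, the Zorn step as you wrote it does not close.
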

Variants of this theorem have appeared in the works of Bogomolov~\cite{zbMATH00124400}, Bogomolov-Tschinkel~\cite{zbMATH02078165}, Efrat~\cite{zbMATH01489500}, Koenigsmann~\cite{zbMATH00785379}, Engler-Koenigsmann~\cite{zbMATH01135369}, the author~\cite{zbMATH06778776,https://doi.org/10.48550/arxiv.1705.01084}, and others, albeit primarily the Galois-theoretic context.
The proof of Theorem~\ref{theorem:main_acl} has now been completely formally verified using the \texttt{Lean3} interactive theorem prover~\cite{10.1007/978-3-319-21401-6_26} and its formally verified mathematics library \texttt{mathlib}~\cite{10.1145/3372885.3373824}, see~\cite{lean-acl-pairs}.
We thus omit the proof, referring instead to the references above for the key ideas and to~\cite{lean-acl-pairs} for the computer-verified proof.

The power of this theorem is in the implication $(1) \Rightarrow (2)$, while the converse is a simple consequence of the ultrametric inequality.
We will need a slightly stronger variant of the ``easy'' direction $(2) \Rightarrow (1)$, formulated as follows.
\begin{lemma}\label{lemma:Rcal_residue_compat}
  Suppose that $v$ is a valuation of $K$ and $f,g \in \Dcal_{v}$ are given.
  Let $f_{v}$ and $g_{v}$ denote the images of $f$ and $g$ in $\Gcal_{Kv}$ under the canonical map $\Dcal_{v} \to \Gcal_{Kv}$.
  Then $\Rcal_{K}(f,g)$ holds if and only if $\Rcal_{Kv}(f_{v},g_{v})$ holds.
\end{lemma}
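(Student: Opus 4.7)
The plan is to establish the equivalence directly on the level of the group homomorphisms $K^{\times} \to \Qbb$ underlying $f, g$, using two elementary observations. First, since $f, g \in \Dcal_{v} = \Gcal_{K|\Urm_{v}^{1}}$ vanish on $\Urm_{v}^{1}$, they factor through $K^{\times}/\Urm_{v}^{1}$, and via the inclusion $Kv^{\times} = \Urm_{v}/\Urm_{v}^{1} \hookrightarrow K^{\times}/\Urm_{v}^{1}$ coming from the exact sequence in the preliminaries, one has $f(u) = f_{v}(\bar{u})$ and $g(u) = g_{v}(\bar{u})$ for every $u \in \Urm_{v}$. Second, for any $h \in \Gcal_{K}$ we have $h(-1) = 0$, since $2 h(-1) = h(1) = 0$ in the torsion-free group $\Qbb$.

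For the forward direction, I assume $\Rcal_{K}(f,g)$ and take $\bar{x}, \bar{y} \in Kv^{\times}$ with $\bar{x} + \bar{y} = 1$. Picking any unit lift $x \in \Urm_{v}$ of $\bar{x}$ and setting $y := 1 - x$, the element $y$ lies in $\Urm_{v}$ with residue $\bar{y}$ (as $\bar{y} \ne 0$), so $(x,y)$ is a Steinberg pair in $K$ lifting $(\bar{x}, \bar{y})$. Applying $\Rcal_{K}(f,g)$ to $(x,y)$ and transporting along the first observation yields $f_{v}(\bar{x}) g_{v}(\bar{y}) = f_{v}(\bar{y}) g_{v}(\bar{x})$; since every Steinberg pair in $Kv$ arises in this way, this proves $\Rcal_{Kv}(f_{v}, g_{v})$.

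For the reverse direction, I assume $\Rcal_{Kv}(f_{v}, g_{v})$, fix $x, y \in K^{\times}$ with $x + y = 1$, and analyze the values $v(x), v(y)$. The ultrametric inequality rules out $v(x), v(y)$ both being positive and forces $v(x) = v(y)$ whenever either is negative, leaving three essentially distinct cases. If $v(x) > 0$ (or symmetrically $v(y) > 0$), then $y$ (resp.\ $x$) lies in $\Urm_{v}^{1}$, so one side of the alternating identity is zero and the other equals zero too, and the identity is trivial. If $v(x) = v(y) = 0$, then $x, y \in \Urm_{v}$ and their residues form a Steinberg pair in $Kv^{\times}$, so the identity in $K$ follows from $\Rcal_{Kv}(f_{v}, g_{v})$ via the first observation. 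The only real subtlety is the case $v(x) < 0$: here $y/(-x) = 1 - 1/x \in \Urm_{v}^{1}$, so the first observation gives $f(y) = f(-x)$ and $g(y) = g(-x)$, which by the second observation reduce to $f(x)$ and $g(x)$ respectively, collapsing the desired identity to a tautology. This last case is the only place where torsion-freeness of $\Qbb$ is used, and it is the main (and minor) obstacle in the proof.
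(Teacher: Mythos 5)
Your proof is correct and follows essentially the same strategy as the paper's: choosing unit lifts $x, 1-x$ of a Steinberg pair in $Kv$ for the forward direction, and a case analysis on $v(x), v(y)$ for the converse, with the $v(x) < 0$ case handled by the same trick of writing $y$ as $(-x)$ times a principal unit and using $h(-1) = 0$. The only (cosmetic) differences are that you justify $h(-1)=0$ explicitly via torsion-freeness of $\Qbb$ and organize the case split more tightly using the ultrametric inequality upfront.
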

\begin{proof}
  Suppose $\Rcal_{K}(f,g)$ holds, and let $x,y \in Kv^{\times}$ satisfy $x + y = 1$.
  We may choose lifts $\tilde x, \tilde y \in \Urm_{v}$ of $x, y$ such that $\tilde x + \tilde y = 1$.
  Thus
  \[ f_{v}(x) \cdot g_{v}(y) = f(\tilde x) \cdot g(\tilde y) = f(\tilde y) \cdot g(\tilde x) = f_{v}(y) \cdot g_{v}(y). \]

  Conversely, suppose that $\Rcal_{Kv}(f_{v},g_{v})$ holds and let $x,y \in K^{\times}$ be such that $x + y = 1$.
  We must show that
  \[ f(x) \cdot g(y) = f(y) \cdot g(x). \]
  If $v(x) > 0$ then $f(y) = g(y) = 0$ since $f,g \in \Dcal_{v}$, so the equation in question trivially holds true.
  The equation similarly holds true if $v(y) > 0$.
  If $v(x) < 0$ then $y = x \cdot (x^{-1} - 1)$ while $v(x^{-1}) > 0$.
  Since $f(-1) = 0$ and $f \in \Dcal_{v}$, it follows that
  \[ f(y) = f(x) + f(x^{-1}-1) = f(x) + f(1-x^{-1}) = f(x). \]
  We similarly have $g(y) = g(x)$, so the equation again holds true.
  The equation similarly holds true if $v(y) < 0$.

  The last case to consider is where $v(x) = v(y) = 0$, in which case $x,y \in \Urm_{v}$ and the values of $f$ and $g$ at $x$ and $y$ can be computed in the residue field.
  In other words, letting $\bar x$ and $\bar y$ denote the images of $x$ and $y$ in $Kv^\times$, we have $ \bar x + \bar y = 1$ so that
  \[ f(x) \cdot g(y) = f_{v}(\bar x) \cdot g_{v}(\bar y) = f_{v}(\bar y) \cdot g_{v}(\bar x) = f(y) \cdot g(x). \]
  In any case, we see that the necessary equation does indeed hold.
\end{proof}

\subsection{Valuative subspaces}

A closed subspace $\Ical$ of $\Gcal_{K}$ will be called \emph{valuative} provided that $\Ical \subset \Ical_{v}$ for some valuation $v$ of $K$.
\begin{lemma}\label{lemma:main-valuative}
  Suppose that $\Ical$ is valuative.
  Then there exists a unique minimal valuation $v_{\Ical}$ such that $\Ical \subset \Ical_{v_{\Ical}}$.
  The valuation $v = v_{\Ical}$ is characterized by the following two properties:
  \begin{enumerate}
    \item One has $\Ical \subset \Ical_{v}$, or equivalently $\Urm_{v} \subset \Ical^{\perp}$.
    \item The subgroup $v(\Ical^{\perp})$ contains no nontrivial convex subgroups.
  \end{enumerate}
\end{lemma}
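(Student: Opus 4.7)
The plan is to work with the saturated subgroup $T := \Ical^{\perp} \subset K^{\times}$ and the set $V := \{v \ | \ \Urm_{v} \subset T\}$ of valuations satisfying~(1); the valuative hypothesis ensures that $V$ is nonempty, and (1) is equivalent to $\Urm_{v} \subset T$ via $\Ical_{v} = \Gcal_{K|\Urm_{v}}$. The goal is to show that $V$ has a unique minimum element in the coarsening order, and that this minimum is characterized by~(2).

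A preliminary observation: for any $v \in V$, one has $T = v^{-1}(v(T))$, since for $y \in K^{\times}$ with $v(y) = v(t)$ for some $t \in T$ the ratio $y/t$ lies in $\Urm_{v} \subset T$. Consequently, coarsenings of $v$ that lie in $V$ correspond bijectively to convex subgroups $H$ of $vK$ contained in $v(T)$, with the coarsening corresponding to $H$ having unit group $v^{-1}(H)$. In particular, condition~(2) on a given $v \in V$ is equivalent to $v$ admitting no proper coarsening in $V$.

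The technical heart of the argument is showing that $V$ is \emph{downward directed}: for any $v_{1}, v_{2} \in V$, the compositum $\tilde v$, with valuation ring $\Ocal_{\tilde v} = \Ocal_{v_{1}} \cdot \Ocal_{v_{2}}$ (a valuation ring since it is an overring of $\Ocal_{v_{1}}$), lies in $V$. For this I would use that $\tilde v$ is by construction the finest common coarsening of $v_{1}$ and $v_{2}$, so the valuations induced by $v_{1}$ and $v_{2}$ on the residue field $K\tilde v$ are independent. The approximation theorem then allows one to write the residue $\bar x$ of any $x \in \Urm_{\tilde v}$ as a product $\bar x = \bar u_{1} \cdot \bar u_{2}$, where the image of $\bar u_{i}$ under the valuation induced by $v_{i}$ vanishes. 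Lifting produces a factorization $x = u_{1} u_{2} w$ with $u_{i} \in \Urm_{v_{i}} \subset T$ and $w \in \Urm_{\tilde v}^{1}$; the ultrametric inequality then yields $\Urm_{\tilde v}^{1} \subset \Urm_{v_{1}} \cap \Urm_{v_{2}} \subset T$, so $x \in T$, establishing $\Urm_{\tilde v} \subset T$.

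Once directedness is established, Zorn's lemma applied in the dual order---whose chain condition is satisfied because the union of an ascending chain of valuation rings in $V$ is again a valuation ring with unit group in $T$---yields a minimal $v_{\Ical} \in V$, and directedness immediately promotes this to a minimum: for any $v \in V$, the common coarsening of $v$ and $v_{\Ical}$ lies in $V$ and coarsens $v_{\Ical}$, hence equals $v_{\Ical}$ by minimality, so $v_{\Ical}$ coarsens $v$. Combining with the preliminary observation, a valuation $v$ satisfies both~(1) and~(2) precisely when $v$ has no proper coarsening in $V$, which by the minimum property is equivalent to $v = v_{\Ical}$. The main obstacle is the directedness step, and more specifically the observation that $\Urm_{\tilde v}^{1}$ sits inside both $\Urm_{v_{1}}$ and $\Urm_{v_{2}}$ (via the ultrametric inequality applied to elements $1 + m$ with $\tilde v(m) > 0$), which is what makes the third factor $w$ in the decomposition of $x$ automatically land in $T$.
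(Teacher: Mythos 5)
Your proof is correct and takes essentially the same approach as the paper: show that the set $V$ of valuations satisfying~(1) is closed under infima of chains (via unions of valuation rings) and under binary infima, conclude a unique minimum exists, and then match condition~(2) against ``no proper coarsening in $V$'' via the correspondence between coarsenings and convex subgroups. The one notable difference is presentational: the paper simply asserts $\Urm_{w_{1}} \cdot \Urm_{w_{2}} = \Urm_{w}$ for the finest common coarsening $w$, whereas you actually prove this factorization via the approximation theorem for independent valuations on the residue field and the ultrametric bound $\Urm_{\tilde v}^{1} \subset \Urm_{v_{1}} \cap \Urm_{v_{2}}$, which is a genuine (and welcome) piece of added detail.
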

\begin{proof}
  The collection of all valuations $w$ such that $\Ical \subset \Ical_{w}$ is nonempty by assumption.
  Also, if $w_{i}$ is a chain of such valuations, then the infimum $w$ of the $w_{i}$ satisfies
  \[ \Urm_{w} = \bigcup_{i} \Urm_{w_{i}} \]
  and as $\Urm_{w_{i}} \subset \Ical^{\perp}$ for all $i$, it follows that $\Urm_{w} \subset \Ical^{\perp}$ hence $w$ is also in the collection.
  This collection is also closed under binary infimums: If $w_{1}$ and $w_{2}$ are two such valuations and $w$ is their infimum, then
  \[ \Urm_{w_{1}} \cdot \Urm_{w_{2}} = \Urm_{w}, \]
  and since $\Urm_{w_{i}} \subset \Ical^{\perp}$, we also have $\Urm_{w} \subset \Ical^{\perp}$, hence $w$ is again in the collection.
  It follows that this collection has a unique minimal element $v_{\Ical}$.

  Put $v = v_{\Ical}$.
  Note that $v(\Ical^{\perp})$ contains no nontrivial convex subgroups for otherwise the coarsening associated to such a subgroup would contradict the minimality of $v$.
  Conversely, if $v$ satisfies (1) and (2) and $w$ is a coarsening of $v$ satisfying (1), then the convex subgroup of $vK$ associated to $w$ must be contained in $v(\Ical^{\perp})$.
  This subgroup must be trivial by condition (2) and thus $w = v$.
  It follows that $v$ is minimal with respect to condition (1), hence $v = v_{\Ical}$.
  This concludes the proof.
\end{proof}

\begin{lemma}\label{lemma:v_perp_saturation}
  Suppose that $v$ is a valuation of $K$ and that $\Hcal$ is a closed subspace of $\Gcal_{K}$.
  Then $v((\Ical_{v} \cap \Hcal)^{\perp})$ is the saturation of $v(\Hcal^{\perp})$ in $vK$.
  In particular, if $\Hcal \subset \Ical_{v}$ then $v(\Hcal^{\perp})$ is saturated.
\end{lemma}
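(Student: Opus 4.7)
The plan is to read off the claim from the Galois correspondence between closed subspaces of $\Gcal_K$ and saturated subgroups of $K^\times$ together with the short exact sequence $1 \to \Urm_v \to K^\times \to vK \to 1$.

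First, I would identify the orthogonal on the left-hand side. Under the duality recalled in the preliminaries, for any two closed subspaces $\Hcal_1, \Hcal_2 \subset \Gcal_K$ one has $(\Hcal_1 \cap \Hcal_2)^\perp = \widetilde{\Hcal_1^\perp \cdot \Hcal_2^\perp}$, the saturation in $K^\times$ of the product subgroup (both sides are the smallest saturated subgroup whose orthogonal contains $\Hcal_1 \cap \Hcal_2$). Since $\Ical_v^\perp = \Urm_v$, which is already saturated because $K^\times/\Urm_v = vK$ is torsion-free, specializing to $\Hcal_1 = \Ical_v$ and $\Hcal_2 = \Hcal$ yields
\[ (\Ical_v \cap \Hcal)^\perp = \widetilde{\Urm_v \cdot \Hcal^\perp}. \]

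Next, I would prove the following elementary fact: for any subgroup $T \subset K^\times$ with $\Urm_v \subset T$, the saturation $\widetilde T$ in $K^\times$ equals $v^{-1}(\widetilde{v(T)})$, where $\widetilde{v(T)}$ is the saturation of $v(T)$ in $vK$. The hypothesis $T \supset \Urm_v$ means $T = v^{-1}(v(T))$, so both inclusions reduce to the equivalence $x^n \in T \Leftrightarrow n \cdot v(x) \in v(T)$. Applying this to $T = \Urm_v \cdot \Hcal^\perp$, whose image under $v$ is $v(\Hcal^\perp)$, and then applying the surjection $v$, gives
\[ v((\Ical_v \cap \Hcal)^\perp) = \widetilde{v(\Hcal^\perp)}, \]
as required. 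The \emph{in particular} statement then follows immediately by taking $\Hcal \subset \Ical_v$, so that $\Ical_v \cap \Hcal = \Hcal$ and the identity reads $v(\Hcal^\perp) = \widetilde{v(\Hcal^\perp)}$.

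I do not anticipate a serious obstacle: the argument is a two-step translation, first from the $(\Kcal_K,\Gcal_K)$-duality to saturation in $K^\times$, then from $K^\times$ down to $vK$ via the valuation sequence. The only delicate point is that saturation does not commute with arbitrary quotients of abelian groups, but it does commute with the quotient $K^\times \to vK$ when applied to subgroups containing $\Urm_v = \ker v$; this is precisely why the product $\Urm_v \cdot \Hcal^\perp$, rather than $\Hcal^\perp$ alone, appears naturally after invoking the Galois correspondence.
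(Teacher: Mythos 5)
Your argument is correct and follows essentially the same route as the paper: identify $(\Ical_v\cap\Hcal)^\perp$ as the saturation of $\Urm_v\cdot\Hcal^\perp$ via the Galois correspondence, then transport saturation along $v$ using that $\Urm_v=\ker v$. Your packaging of the second step as the clean auxiliary fact $\widetilde T = v^{-1}\bigl(\widetilde{v(T)}\bigr)$ for $T\supset\Urm_v$ is a modest tidying of the paper's more hands-on computation, and it handles the ``in particular'' clause automatically since the right-hand side is saturated by definition.
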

\begin{proof}
  Put $H := (\Ical_{v} \cap \Hcal)^{\perp}$.
  First, note that $v(H)$ is indeed saturated in $vK$.
  Indeed, if $n \cdot v(t) \in v(H)$ for some $t \in K^{\times}$ and some positive integer $n$, then $t^{n} \in \Urm_{v} \cdot H$ while $\Urm_{v} \subset H$ so that $t^{n} \in H$.
  Since $H$ is itself saturated, it follows that $t \in H$ hence $v(t) \in v(H)$.

  Put $T := \Urm_{v} \cdot \Hcal^{\perp}$.
  Since $\Ical_{v}^{\perp} = \Urm_{v}$, it follows that $\Gcal_{K|T} = \Ical_{v} \cap \Hcal$ and thus $H$ is the saturation of $T$.
  This means that $H/T$ is torsion, hence $v(H)/v(T)$ is torsion as well, while $v(T) = v(\Hcal^{\perp})$.
  It follows that $v(H)$ is indeed the saturation of $v(\Hcal^{\perp})$.
\end{proof}

\begin{lemma}\label{lemma:valuative_iff_saturation}
  Let $\Hcal$ be a closed subspace of $\Gcal_{K}$, and let $v$ be a valuation of $K$.
  Then $v = v_{\Ical}$ for $\Ical = \Ical_{v} \cap \Hcal$ if and only if the saturation of $v(\Hcal^{\perp})$ in $vK$ contains no nontrivial convex subgroups.
\end{lemma}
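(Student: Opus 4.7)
The plan is to assemble this statement directly from the two preceding lemmas, namely Lemma~\ref{lemma:main-valuative} and Lemma~\ref{lemma:v_perp_saturation}. Write $\Ical := \Ical_v \cap \Hcal$. Since $\Ical \subset \Ical_v$ by construction, $\Ical$ is valuative, so Lemma~\ref{lemma:main-valuative} applies and produces a minimal valuation $v_{\Ical}$ characterized by conditions (1) and (2) of that lemma. My task reduces to identifying when the given valuation $v$ coincides with $v_\Ical$.

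First I would observe that condition (1) of Lemma~\ref{lemma:main-valuative} is automatic for our $v$, again because $\Ical \subset \Ical_v$ holds by definition of $\Ical$. Hence the equality $v = v_{\Ical}$ is equivalent to condition (2) alone, which in the present notation reads: the subgroup $v\bigl((\Ical_v \cap \Hcal)^\perp\bigr) = v(\Ical^\perp)$ of $vK$ contains no nontrivial convex subgroups.

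Next I would invoke Lemma~\ref{lemma:v_perp_saturation} with the given $\Hcal$, which asserts precisely that $v\bigl((\Ical_v \cap \Hcal)^\perp\bigr)$ is the saturation of $v(\Hcal^\perp)$ in $vK$. Substituting this identification into condition (2) above yields exactly the condition in the statement of the lemma, namely that the saturation of $v(\Hcal^\perp)$ in $vK$ contains no nontrivial convex subgroups. This gives both implications simultaneously.

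There is no genuine obstacle here; the lemma is essentially a bookkeeping corollary of the two previous lemmas. The only small thing to verify explicitly is that $\Ical$ is valuative (so that $v_{\Ical}$ is well-defined), but this is immediate from the fact that $v$ itself witnesses the valuative property of $\Ical$.
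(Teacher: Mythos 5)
Your proof is correct and is precisely the argument the paper has in mind: the paper's own proof reads simply ``Combine Lemmas~\ref{lemma:main-valuative} and~\ref{lemma:v_perp_saturation},'' and you have spelled out that combination accurately, including the observation that condition~(1) of Lemma~\ref{lemma:main-valuative} is automatic for the given $v$.
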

\begin{proof}
  Combine Lemmas~\ref{lemma:main-valuative} and~\ref{lemma:v_perp_saturation}.
\end{proof}

\subsection{Detecting valuative subspaces}

\begin{lemma}\label{lemma:centralizer_valuative}
  Let $\Ical$ be a valuative subspace of $\Gcal_{K}$ with associated valuation $v := v_{\Ical}$.
  Then one has $\Dcal_{v} = \Ccal_{K}(\Ical)$.
\end{lemma}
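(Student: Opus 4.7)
The plan is to prove the two inclusions separately; the inclusion $\Dcal_v \subset \Ccal_K(\Ical)$ is immediate, while the reverse inclusion carries all the content. For the easy direction, given $f \in \Dcal_v$ and $g \in \Ical \subset \Ical_v$, Lemma~\ref{lemma:Rcal_residue_compat} reduces $\Rcal_K(f, g)$ to $\Rcal_{Kv}(f_v, g_v)$; but $g_v = 0$ in $\Gcal_{Kv}$ because $g$ vanishes on all of $\Urm_v$, so the residue condition holds trivially.

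For the reverse inclusion I would fix $f \in \Ccal_K(\Ical)$ and aim to prove $f(1+m) = 0$ for every $m \in \mfrak_v \smin \{0\}$. The central trick is to feed the Steinberg relation $(1+m) + (-m) = 1$ into the alternating condition $\Rcal_K(f, g)$ for arbitrary $g \in \Ical$. Using $g(1+m) = 0$ (since $1+m \in \Urm_v \subset \Ical^\perp$) together with the torsion identities $f(-1) = g(-1) = 0$, the alternating equation collapses to
\[ f(1+m) \cdot g(m) = 0 \qquad \text{for all } g \in \Ical. \]
If $m \notin \Ical^\perp$, choosing $g \in \Ical$ with $g(m) \ne 0$ immediately forces $f(1+m) = 0$.

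The main obstacle is the complementary case $m \in \Ical^\perp$, where $\alpha := v(m) > 0$ lies inside $v(\Ical^\perp)$. I would invoke Lemma~\ref{lemma:main-valuative}, which asserts that $v(\Ical^\perp)$ contains no nontrivial convex subgroup of $vK$. Applied to the convex subgroup $C$ of $vK$ generated by $\alpha$, this produces some $\gamma \in C \smin v(\Ical^\perp)$; after replacing $\gamma$ by $-\gamma$ if necessary, we may assume $\gamma > 0$. Taking the minimal $n \ge 1$ with $\gamma \le n\alpha$ and setting $\beta := \gamma - (n-1)\alpha$, the minimality of $n$ forces $\beta > 0$, the inequality $\gamma \le n\alpha$ forces $\beta \le \alpha$, and the possibilities $\beta = \alpha$ and $\beta \in v(\Ical^\perp)$ are both ruled out since either would place $\gamma$ back inside $v(\Ical^\perp)$. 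Thus $0 < \beta < \alpha$ and $\beta \notin v(\Ical^\perp)$. Any $m' \in K^\times$ with $v(m') = \beta$ then lies in $\mfrak_v \smin \Ical^\perp$, so the previous case supplies $f(1 + m') = 0$. Writing $(1+m)(1+m') = 1 + m''$ with $m'' = m + m' + mm'$, the strict inequality $\beta < \alpha$ ensures $v(m'') = \beta$, whence $m'' \notin \Ical^\perp$ and $f(1 + m'') = 0$ as well. Multiplicativity of $f$ now yields $f(1+m) = f(1+m'') - f(1+m') = 0$, completing the argument.
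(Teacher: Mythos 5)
Your proof is correct and takes essentially the same approach as the paper: the same two-case analysis on whether the principal unit $1+m$ comes from an $m$ in $\Ical^\perp$, with Lemma~\ref{lemma:main-valuative} producing an auxiliary element of strictly smaller positive value outside $\Ical^\perp$ in the harder case, and the same multiplicativity trick $(1+m)(1+m')=1+m''$ to finish. Your explicit extraction of $\beta$ from the convex subgroup generated by $\alpha$ is a slightly more detailed rendering of the paper's one-line observation that the interval $[0,v(x)]$ cannot be contained in $v(\Ical^\perp)$; since $\Urm_v\subset\Ical^\perp$ makes $y\in\Ical^\perp$ equivalent to $v(y)\in v(\Ical^\perp)$, the two formulations of the auxiliary condition agree.
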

\begin{proof}
  The inclusion $\Dcal_{v} \subset \Ccal_{K}(\Ical)$ follows from Lemma~\ref{lemma:Rcal_residue_compat}.
  Conversely, suppose that $f \in \Gcal_{K}$ satisfies $\Rcal_{K}(f,g)$ for all $g \in \Ical$.
  Let $x \in K^{\times}$ be an element satisfying $v(x) > 0$.

  Note that for all $g \in \Ical$, one has $g(1-x) = 0$ since $1-x \in \Urm_{v} \subset \Ical^{\perp}$.
  If there exists some $g \in \Ical$ such that $g(x) \neq 0$, then one has $f(1-x) = 0$ since
  \[ f(1-x) \cdot g(x) = f(x) \cdot g(1-x) = 0. \]

  Otherwise, there must exists some $y \in K^{\times}$ such that $0 < v(y) < v(x)$ and some $g \in \Ical$ such that $g(y) \neq 0$.
  Indeed, if this does not hold then $[0,v(x)] \subset vK$ would be contained in $v(\Ical^{\perp})$, which cannot happen since $v = v_{\Ical}$.
  With such a $y$, the argument above shows that $f(1-y) = 0$ while
  \[ f(1-x) = f((1-x) \cdot (1-y)) = f(1-(y + x \cdot (1-y))). \]
  But $v(y + x \cdot (1-y)) = v(y)$, so, again, the argument above shows that $f(1-x) = 0$.
  In other words, $\Urm_{v}^{1} \subset f^{\perp}$, hence $f \in \Dcal_{v}$.
\end{proof}

\begin{proposition}\label{proposition:centre_valuative}
  Suppose that $\Dcal$ is a closed subspace of $\Gcal_{K}$ such that $\Zcal_{K}(\Dcal) \neq \Dcal$.
  Then $\Ical := \Zcal_{K}(\Dcal)$ is valuative and $\Dcal \subset \Dcal_{v}$ for $v = v_{\Ical}$.
\end{proposition}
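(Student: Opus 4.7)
The plan is to pick some $f_0 \in \Dcal \smallsetminus \Ical$—which exists by hypothesis—and work with the pairwise-alternating subspace $\Ical + \Qbb f_0$. Since $\Ical = \Zcal_K(\Dcal)$, every element of $\Ical$ alternates with $f_0$, and combined with the bilinearity of the alternating relation (implicit in Fact~\ref{fact:alternating_iff_milnor_condition}) and the triviality of $\Rcal_K(f_0,f_0)$, the closed subspace $\Ical + \Qbb f_0$ is pairwise alternating. Theorem~\ref{theorem:main_acl} then yields a valuation $v$ with $\Ical + \Qbb f_0 \subset \Dcal_v$ and a closed codimension-$\le 1$ subspace $\Ical' \subset \Ical + \Qbb f_0$ contained in $\Ical_v$.

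The heart of the argument is to promote this to $\Ical \subset \Ical_v$. Once that is known, $\Ical$ is valuative with some minimal valuation $v_\Ical$ by Lemma~\ref{lemma:main-valuative}, and since every element of $\Dcal$ alternates with $\Ical$ by the definition of the centre, Lemma~\ref{lemma:centralizer_valuative} gives at once $\Dcal \subset \Ccal_K(\Ical) = \Dcal_{v_\Ical}$, which completes the proposition.

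To establish $\Ical \subset \Ical_v$, suppose not. The image of $\Ical + \Qbb f_0$ in $\Gcal_{Kv}$ has dimension at most one (its kernel contains $\Ical'$ of codimension $\le 1$), so the image of $\Ical$ is exactly one-dimensional, spanned by $\bar h$ for some $h \in \Ical \smallsetminus \Ical_v$. Moreover $\bar f_0 \in \Qbb \bar h$, so after replacing $f_0$ by $f_0 - \lambda h$ for the appropriate $\lambda \in \Qbb$ we may assume $f_0 \in \Ical_v \cap (\Dcal \smallsetminus \Ical)$. A direct case analysis on $v(x)$ and $v(y)$ for $x + y = 1$—patterned on the proof of Lemma~\ref{lemma:Rcal_residue_compat}—shows that under the condition $f_0 \in \Ical_v$ the relation $\Rcal_K(f_0, g)$ holds automatically for every $g \in \Dcal_v$. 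Since $f_0 \notin \Zcal_K(\Dcal)$, there is some $g_0 \in \Dcal$ with $\Rcal_K(f_0, g_0)$ failing, and by the preceding observation $g_0 \notin \Dcal_v$.

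The main obstacle is to close the argument from this point: namely, to deduce a contradiction (or equivalently, to construct a refinement $v''$ of $v$ with $\Ical \subset \Ical_{v''}$) out of the data $\Ical + \Qbb f_0 \subset \Dcal_v$ with $f_0 \in \Ical_v$, together with $g_0 \in \Dcal \smallsetminus \Dcal_v$ alternating with every element of $\Ical$. The natural strategy is to apply Theorem~\ref{theorem:main_acl} to the pairwise-alternating closed subspace $\Ical + \Qbb g_0$, producing a second valuation $v'$, and then to compare $v$ and $v'$ through their residues on $\Ical$—ultimately forcing $\bar h \in \Gcal_{Kv}$ to be valuation-like in the sense of lying in some $\Ical_w$ for a valuation $w$ of $Kv$, after which composing $v$ with $w$ yields the desired $v''$. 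Making this comparison rigorous is the subtle part, since $v$ and $v'$ need not be comparable as valuations, and one must keep careful track of both value-group and residue-field data on both sides.
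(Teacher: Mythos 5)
Your setup is sound and closely parallels the paper's: you correctly reduce to the point where Theorem~\ref{theorem:main_acl} has produced a valuation $v$ from the pairwise-alternating space $\Ical + \Qbb f_{0}$, and you correctly observe (via Lemma~\ref{lemma:Rcal_residue_compat}, since the residue of $f_{0}$ is trivial when $f_{0} \in \Ical_{v}$) that this forces the existence of a second element $g_{0} \in \Dcal \smin \Dcal_{v}$ not alternating with $f_{0}$. But you explicitly stop there. The part you label as ``the main obstacle'' --- comparing $v$ with the valuation $v'$ produced from $\Ical + \Qbb g_{0}$ --- is precisely the content of the paper's proof, and no amount of bookkeeping of residue-field data is a substitute for the two actual ideas needed: (i) if $v$ and $v'$ are \emph{incomparable}, their infimum $w$ satisfies $\Urm_{v}^{1} \cdot \Urm_{v'}^{1} = \Urm_{w}$ by the approximation theorem for independent valuations, whence $\Dcal_{v} \cap \Dcal_{v'} = \Ical_{w}$ and so $\Ical \subset \Ical_{w}$, giving valuativity directly; and (ii) if they are \emph{comparable}, say $v \le v'$, one picks $g_{i} \in \Ical_{v_{i}} \cap \Dcal_{i}$ spanning $\Dcal_{i}$ over $\Ical$, observes that $\Dcal_{1} + \Dcal_{2} = \Ical + \Qbb g_{1} + \Qbb g_{2}$ then lands in $\Dcal_{v'}$ with at most one-dimensional image in $\Gcal_{Kv'}$, and invokes Lemma~\ref{lemma:Rcal_residue_compat} to conclude that the non-alternating pair is in fact alternating, a contradiction.

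Your alternative sketch --- forcing $\bar h \in \Gcal_{Kv}$ to lie in some $\Ical_{w}$ for a valuation $w$ of $Kv$ and then composing --- is not obviously equivalent to this and is not carried out; indeed, composition of valuations would only refine $v$, which cannot help in the incomparable case where one must pass to a \emph{coarsening} (the infimum), not a refinement. Also note that the preliminary reduction replacing $f_{0}$ by $f_{0} - \lambda h$ is an extra move that neither appears in the paper nor visibly simplifies the remaining work. The proposal identifies the right intermediate configuration but is missing the argument that actually closes it.
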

\begin{proof}
  By Lemma~\ref{lemma:centralizer_valuative} it suffices to show that $\Ical$ is valuative.
  Since $\Ical \neq \Dcal$, there exists $f_{1},f_{2} \in \Dcal$ such that $\Rcal_{K}(f_{1},f_{2})$ does not hold.
  Put $\Dcal_{i} = \Ical + \Qbb \cdot f_{i}$.
  Then $\Dcal_{i}$ are both closed subspaces of $\Gcal_{K}$, and the following hold:
  \begin{enumerate}
    \item $\Ical$ has codimension $1$ in $\Dcal_{i}$.
    \item $\Ical$ has codimension $2$ in $\Dcal_{1} + \Dcal_{2}$.
    \item $\Ical = \Dcal_{1} \cap \Dcal_{2}$.
    \item For all $f,g \in \Dcal_{i}$, one has $\Rcal_{K}(f,g)$.
  \end{enumerate}
  By Theorem~\ref{theorem:main_acl}, there exist valuations $v_{i}$ and closed subspaces $\Ical_{i}$ of $\Dcal_{i}$ of codimension $\leq 1$ such that $\Dcal_{i} \subset \Dcal_{v_{i}}$ and $\Ical_{i} \subset \Ical_{v_{i}}$.

  If $v_{1}$ and $v_{2}$ are \emph{not} comparable, then, letting $v$ denote their infimum, one has $\Urm_{v_{1}}^{1} \cdot \Urm_{v_{2}}^{1} = \Urm_{v}$ by the approximation theorem for independent valuations.
  It follows that $\Dcal_{v_{1}} \cap \Dcal_{v_{2}} = \Ical_{v}$, hence $\Ical \subset \Ical_{v}$, thereby concluding the proof.

  So, assume without loss of generality that $v_{1} \le v_{2}$ hence
  \[ \Ical_{v_{1}} \subset \Ical_{v_{2}} \subset \Dcal_{v_{2}} \subset \Dcal_{v_{1}}. \]
  Assume for a contradiction that $\Ical$ is \emph{not valuative}.
  Then $\Ical$ is not contained in $\Ical_{v_{i}} \cap \Dcal_{i}$, so we may find $g_{i} \in \Ical_{v_{i}} \cap \Dcal_{i}$ such that $\Dcal_{i} = \Ical + \Qbb \cdot g_{i}$.
  In particular, we have
  \[ \Dcal_{1} + \Dcal_{2} = \Ical + \Qbb \cdot g_{1} + \Qbb \cdot g_{2} \]
  while $g_{1},g_{2} \in \Ical_{v_{2}}$.
  It follows that $\Dcal_{1} + \Dcal_{2} \subset \Dcal_{v_{2}}$ and that the image of $\Dcal_{1} + \Dcal_{2}$ in $\Dcal_{v_{2}}/\Ical_{v_{2}}$ agrees with the image of $\Dcal_{2}$ in this quotient, which has dimension $\leq 1$.
  This together with Lemma~\ref{lemma:Rcal_residue_compat} would imply that $\Rcal_{K}(f,g)$ holds for all $f,g \in \Dcal_{1} + \Dcal_{2}$, which is impossible since $f_{1},f_{2} \in \Dcal_{1} + \Dcal_{2}$.
\end{proof}

\subsection{Visible valuations}

Let $T$ be a subgroup of $K^{\times}$ and $v$ a valuation of $K$.
We shall say that $v$ is \emph{$T$-visible} provided that the following hold:
\begin{enumerate}
  \item One has $\Ical_{v} \cap \Gcal_{K|T} = \Zcal_{K}(\Dcal_{v} \cap \Gcal_{K|T}) \neq \Dcal_{v} \cap \Gcal_{K|T}$.
  \item One has $v = v_{\Ical}$ for $\Ical = \Ical_{v} \cap \Gcal_{K|T}$.
\end{enumerate}
These are precisely the valuations $v$ for which we will be able to characterize $\Ical_{v} \cap \Gcal_{K|T}$ and $\Dcal_{v} \cap \Gcal_{K|T}$ using the relation $\Rcal_{K}$ restricted to $\Gcal_{K|T}$, as we show in the following theorem.
If $H$ is the saturation of $T$, then $\Gcal_{K|T} = \Gcal_{K|H}$ hence a valuation is $T$-visible if and only if it is $H$-visible.
When $T = k^{\times}$ for a subfield $k$ of $K$, then we shall say ``$k$-visible'' instead of ``$k^{\times}$-visible.''
When $T = \{1\}$ is trivial, we shall say ``visible'' instead of ``$\{1\}$-visible.''

\begin{theorem}\label{theorem:detect_visible_valuations}
  Let $T$ be a subgroup of $K^{\times}$ and $\Dcal \subset \Gcal_{K|T}$ be a closed subspace.
  There exists a $T$-visible valuation of $K$ such that $\Zcal_{K}(\Dcal) = \Ical_{v} \cap \Gcal_{K|T}$ and $\Dcal = \Dcal_{v} \cap \Gcal_{K|T}$ if and only if the following conditions hold:
  \begin{enumerate}
    \item One has $\Zcal_{K}(\Dcal) \neq \Dcal$.
    \item One has $\Ccal_{K}(\Zcal_{K}(\Dcal)) \cap \Gcal_{K|T} = \Dcal$.
  \end{enumerate}
\end{theorem}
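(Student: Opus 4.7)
The plan is to prove both directions; the forward direction is largely unpacking definitions, while the reverse direction constructs the valuation via Proposition~\ref{proposition:centre_valuative} and verifies its properties using Lemmas~\ref{lemma:centralizer_valuative} and~\ref{lemma:Rcal_residue_compat}.

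For the forward direction, assume a $T$-visible $v$ exists with $\Zcal_K(\Dcal) = \Ical_v \cap \Gcal_{K|T}$ and $\Dcal = \Dcal_v \cap \Gcal_{K|T}$. Condition~(1) is exactly the inequality in visibility condition~(1). For condition~(2), visibility condition~(2) identifies $v = v_\Ical$ for $\Ical := \Ical_v \cap \Gcal_{K|T} = \Zcal_K(\Dcal)$, so Lemma~\ref{lemma:centralizer_valuative} yields $\Ccal_K(\Zcal_K(\Dcal)) = \Dcal_v$; intersecting with $\Gcal_{K|T}$ gives $\Dcal$.

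For the reverse direction, condition~(1) lets us invoke Proposition~\ref{proposition:centre_valuative} on $\Dcal$ to conclude that $\Ical := \Zcal_K(\Dcal)$ is valuative and $\Dcal \subset \Dcal_v$ for $v := v_\Ical$; this $v$ is the candidate valuation. To check $\Dcal = \Dcal_v \cap \Gcal_{K|T}$, use Lemma~\ref{lemma:centralizer_valuative} to rewrite $\Dcal_v = \Ccal_K(\Ical) = \Ccal_K(\Zcal_K(\Dcal))$; condition~(2) then gives the nontrivial inclusion, the reverse being immediate since $\Dcal \subset \Dcal_v \cap \Gcal_{K|T}$. To check $\Zcal_K(\Dcal) = \Ical_v \cap \Gcal_{K|T}$, the inclusion $\subset$ is immediate from valuativity of $\Ical$ together with $\Ical \subset \Dcal \subset \Gcal_{K|T}$. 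The reverse inclusion is the heart of the argument: given $f \in \Ical_v \cap \Gcal_{K|T}$, the previous equality gives $f \in \Dcal$, and for any $g \in \Dcal \subset \Dcal_v$, Lemma~\ref{lemma:Rcal_residue_compat} reduces $\Rcal_K(f,g)$ to $\Rcal_{Kv}(f_v,g_v)$, which holds trivially since $f \in \Ical_v$ forces $f_v = 0$. Hence $f \in \Zcal_K(\Dcal)$. With both equalities in hand, $T$-visibility of $v$ is automatic: visibility~(1) becomes $\Ical_v \cap \Gcal_{K|T} = \Zcal_K(\Dcal_v \cap \Gcal_{K|T}) \neq \Dcal_v \cap \Gcal_{K|T}$, which is condition~(1) rephrased via the equalities just proved, and visibility~(2) holds by the choice $v = v_\Ical$.

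The main obstacle is the reverse inclusion $\Ical_v \cap \Gcal_{K|T} \subset \Zcal_K(\Dcal)$: a priori an element of $\Ical_v$ has no reason to be central in $\Dcal$, but Lemma~\ref{lemma:Rcal_residue_compat} resolves this cleanly by pushing the alternating condition down to the residue field, where it vanishes trivially.
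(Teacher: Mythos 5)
Your proposal is correct and follows essentially the same route as the paper's proof: Lemma~\ref{lemma:centralizer_valuative} for the forward direction and for identifying $\Dcal_v$, Proposition~\ref{proposition:centre_valuative} to produce the valuation in the reverse direction, and Lemma~\ref{lemma:Rcal_residue_compat} to establish the inclusion $\Ical_v \cap \Gcal_{K|T} \subset \Zcal_K(\Dcal)$. The only difference is that you unpack the residue-field reduction ($f_v = 0$ trivializes the alternating condition) explicitly, whereas the paper cites the lemma without spelling this out.
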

\begin{proof}
  First suppose that $v$ is indeed $T$-visible.
  Put $\Dcal := \Dcal_{v} \cap \Gcal_{K|T}$ and $\Ical := \Ical_{v} \cap \Gcal_{K|T}$.
  By assumption, we have $\Ical = \Zcal_{K}(\Dcal) \neq \Dcal$ and by Lemma~\ref{lemma:centralizer_valuative} we have $\Ccal_{K}(\Ical) = \Dcal_{v}$ since $v = v_{\Ical}$, hence both conditions (1) and (2) hold true.

  Conversely, suppose that $\Dcal$ satisfies conditions (1) and (2) and put $\Ical := \Zcal_{K}(\Dcal)$.
  By Proposition~\ref{proposition:centre_valuative} and condition (1), $\Ical$ is valuative and, setting $v = v_{\Ical}$, one has $\Dcal \subset \Dcal_{v}$.
  Lemma~\ref{lemma:centralizer_valuative} shows that $\Ccal_{K}(\Ical) = \Dcal_{v}$ hence condition (2) implies that $\Dcal = \Dcal_{v} \cap \Gcal_{K|T}$.
  We also know that $\Ical_{v} \cap \Gcal_{K|T} \subset \Zcal_{K}(\Dcal_{v} \cap \Gcal_{K|T}) = \Ical$ by Lemma~\ref{lemma:Rcal_residue_compat}, while $\Ical \subset \Ical_{v} \cap \Gcal_{K|T}$ because $v = v_{\Ical}$.
  Thus $\Ical = \Ical_{v} \cap \Gcal_{K|T}$.
  The fact that $v$ is $T$-visible follows directly from the definition and the observations above.
\end{proof}

\subsection{Abundant visibility}

This section shows that fields of higher transcendence degree have an abundance of visible valuations.
\begin{lemma}\label{lemma:pos_trdeg_non_henselian}
  Suppose that $k$ is a subfield of $K$, and let $T$ be any subgroup of $K^{\times}$ which is contained in $k^{\times}$.
  Assume that $\trdeg(K|k) \geq 1$, and that $v$ is a valuation of $K$ such that $\Gcal_{K|T} \subset \Dcal_{v}$.
  Then $v$ is trivial.
\end{lemma}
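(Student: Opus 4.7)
The plan is to translate the containment $\Gcal_{K|T} \subset \Dcal_{v}$ into a statement about the principal units of $v$, and then produce a contradiction by exhibiting a principal unit that is transcendental over $k$.

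First I would take orthogonals with respect to the pairing $K^{\times} \times \Gcal_{K} \to \Qbb$. Since $\Dcal_{v} = \Gcal_{K|\Urm_{v}^{1}}$, the orthogonal of $\Dcal_{v}$ in $K^{\times}$ is the saturation of $\Urm_{v}^{1}$, by the order-reversing correspondence between closed subspaces of $\Gcal_{K}$ and saturated subgroups of $K^{\times}$ recorded in the preliminaries. Similarly, the orthogonal of $\Gcal_{K|T}$ is the saturation of $T$, which sits inside the saturation of $k^{\times}$ because $T \subset k^{\times}$. Hence the inclusion $\Gcal_{K|T} \subset \Dcal_{v}$ dualizes to the inclusion of the saturation of $\Urm_{v}^{1}$ in the saturation of $k^{\times}$; in particular, every $u \in \Urm_{v}^{1}$ has some positive power in $k^{\times}$, so every principal unit of $v$ is algebraic over $k$.

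Now suppose for contradiction that $v$ is nontrivial. It suffices to produce $x \in \mfrak_{v}$ that is transcendental over $k$: for then $1+x \in \Urm_{v}^{1}$ would be algebraic over $k$, forcing $x$ itself to be algebraic, a contradiction. I would split into two cases according to whether the restriction $v|_{k}$ is trivial. If $v|_{k}$ is nontrivial, pick $a \in k^{\times}$ with $v(a) > 0$ and pick $y \in K$ transcendental over $k$ (using $\trdeg(K|k) \geq 1$); after replacing $y$ by $y^{-1}$ if necessary so that $v(y) \geq 0$, the element $x := ay$ is transcendental over $k$ with $v(x) > 0$. If instead $v|_{k}$ is trivial, then since the value group $vK$ is totally ordered, hence torsion-free, every element of $K$ algebraic over $k$ has valuation zero; but $v$ is nontrivial, so any nonzero $x \in \mfrak_{v}$ is automatically transcendental over $k$ (and the transcendence degree hypothesis is not even needed in this subcase).

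I expect the first step to be the main conceptual hurdle, since the rest is an elementary case analysis in valuation theory; the one point to handle carefully is the bookkeeping around saturation when passing between subgroups of $K^{\times}$ and closed subspaces of $\Gcal_{K}$.
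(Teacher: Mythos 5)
Your proof is correct and follows essentially the same route as the paper: dualize the inclusion $\Gcal_{K|T} \subset \Dcal_v$ to conclude that principal units of $v$ are algebraic over $k$, then exhibit a transcendental $t$ with $v(t)>0$ and contradict by considering $1+t$. The paper simply takes the existence of such a $t$ as evident, whereas you spell it out with a short case analysis on $v|_k$; otherwise the arguments coincide.
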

\begin{proof}
  If not, then there exists some $t \in K$ which is transcendental over $k$ such that $v(t) > 0$.
  Thus $1 + t \in \Urm_{v}^{1} \subset \Dcal_{v}^{\perp} \subset \Gcal_{K|T}^{\perp} \subset \Gcal_{K|k}^{\perp}$.
  But $\Gcal_{K|k}^{\perp}$ is the saturation of $k^{\times}$ in $K^{\times}$.
  This implies that $1+t$ is algebraic over $k$, which is impossible.
\end{proof}

\begin{proposition}\label{proposition:visible_of_trdeg_ge_one}
  Suppose that $k$ is a subfield of $K$, and let $T$ be any subgroup of $K^{\times}$ which is contained in $k^{\times}$.
  Let $v$ be a valuation of $K$ such that the saturation of $vk$ in $vK$ contains no nontrivial convex subgroups and such that $\trdeg(Kv|kv) \geq 1$.
  Then $v$ is visible over $T$.
\end{proposition}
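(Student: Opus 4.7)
The plan is to verify the two clauses of the definition of $T$-visibility in turn.

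For clause (2), that $v = v_\Ical$ for $\Ical := \Ical_v \cap \Gcal_{K|T}$, I would apply Lemma~\ref{lemma:valuative_iff_saturation} with $\Hcal := \Gcal_{K|T}$. Here $\Hcal^\perp$ is the saturation of $T$ in $K^\times$; since $T \subset k^\times$, any $x \in \Hcal^\perp$ has some power in $k^\times$, so $v(x)$ lies in the saturation of $vk$ inside $vK$. Thus the saturation of $v(\Hcal^\perp)$ in $vK$ is contained in the saturation of $vk$ in $vK$, which by hypothesis contains no nontrivial convex subgroups; the same therefore holds for the saturation of $v(\Hcal^\perp)$, and Lemma~\ref{lemma:valuative_iff_saturation} applies.

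For the inequality part of clause (1), namely $\Ical_v \cap \Gcal_{K|T} \neq \Dcal_v \cap \Gcal_{K|T}$, the exact sequence at the end of the ``Valuations'' subsection identifies the quotient with $\Gcal_{Kv|Tv}$, so it suffices to show this is nonzero --- and in fact of $\Qbb$-dimension at least $2$, a stronger bound needed below. Since $\trdeg(Kv|kv) \geq 1$, pick a transcendental $t \in Kv$. Then $kv[t]$ has infinitely many monic irreducibles, so $kv(t)^\times / kv^\times$ is free abelian of infinite rank and injects into $Kv^\times / kv^\times$. Tensoring with $\Qbb$ and taking $\Qbb$-duals gives $\Gcal_{Kv|kv}$ infinite-dimensional; since $Tv \subset kv^\times$, $\Gcal_{Kv|kv} \subset \Gcal_{Kv|Tv}$, yielding the bound.

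The remaining equality $\Ical_v \cap \Gcal_{K|T} = \Zcal_K(\Dcal_v \cap \Gcal_{K|T})$ is the heart of the matter. The inclusion $(\subset)$ is immediate from Lemma~\ref{lemma:Rcal_residue_compat}, since any $f \in \Ical_v$ has residue image $f_v = 0$. For $(\supset)$, if $f$ lies in the centre, then Lemma~\ref{lemma:Rcal_residue_compat} combined with the surjectivity of the residue map $\Dcal_v \cap \Gcal_{K|T} \to \Gcal_{Kv|Tv}$ shows $f_v \in \Zcal_{Kv}(\Gcal_{Kv|Tv})$; the claim thus reduces to proving that this centre vanishes. I would handle this by a dichotomy on $\Dcal' := \Gcal_{Kv|Tv}$. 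If $\Zcal_{Kv}(\Dcal') = \Dcal'$, Theorem~\ref{theorem:main_acl} supplies a valuation $w$ of $Kv$ with $\Dcal' \subset \Dcal_w$ and a subspace of $\Dcal'$ of codimension $\leq 1$ contained in $\Ical_w$; Lemma~\ref{lemma:pos_trdeg_non_henselian}, applied in $Kv \supset kv$ with $Tv \subset kv^\times$ and $\trdeg(Kv|kv) \geq 1$, forces $w$ trivial, so $\Ical_w = 0$ and $\dim \Dcal' \leq 1$ --- contradicting the bound above. Hence $\Zcal_{Kv}(\Dcal') \neq \Dcal'$, and Proposition~\ref{proposition:centre_valuative} produces a valuation $w = v_{\Zcal_{Kv}(\Dcal')}$ of $Kv$ with $\Dcal' \subset \Dcal_w$ and $\Zcal_{Kv}(\Dcal') \subset \Ical_w$; Lemma~\ref{lemma:pos_trdeg_non_henselian} again forces $w$ trivial, whence $\Ical_w = 0$ and $\Zcal_{Kv}(\Dcal') = 0$. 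The main obstacle is precisely this dichotomy: both horns hinge on Lemma~\ref{lemma:pos_trdeg_non_henselian} to eliminate nontrivial residue-field valuations, with the first horn additionally requiring the transcendence-based dimensional lower bound to contradict the codimension-$\leq 1$ conclusion of Theorem~\ref{theorem:main_acl}.
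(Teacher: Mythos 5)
Your proof is correct and follows essentially the same route as the paper's: clause (2) via the saturation characterization (you invoke Lemma~\ref{lemma:valuative_iff_saturation} directly where the paper first reduces to saturated $T$ and radically closed $k$ and then applies Lemmas~\ref{lemma:main-valuative} and~\ref{lemma:v_perp_saturation}, but these amount to the same thing), the nontriviality via the infinite-dimensionality of $\Gcal_{Kv|kv}$, and the centre computation via the same dichotomy on $\Zcal_{Kv}(\Gcal_{Kv|Tv})$ using Theorem~\ref{theorem:main_acl}, Proposition~\ref{proposition:centre_valuative}, and Lemma~\ref{lemma:pos_trdeg_non_henselian}.
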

\begin{proof}
  Since the saturation of $T$ in $K^{\times}$ is contained in the radical closure of $k$ in $K$, we may assume without loss of generality that $T$ is saturated and that $k$ is radically closed in $K$.
  In particular, $k^{\times}$ is also saturated in $K^{\times}$ and hence $k^{\times} = \Gcal_{K|k}^{\perp}$ while $T = \Gcal_{K|T}^{\perp}$.
  By Lemma~\ref{lemma:v_perp_saturation}, we see that the saturation of $vk$ in $vK$ is $v(\Ical_{0}^{\perp})$ where $\Ical_{0} = \Ical_{v} \cap \Gcal_{K|k}$.
  On the other hand, our assumption ensures that $\Gcal_{K|k} \subset \Gcal_{K|T}$ hence $\Ical_{0} \subset \Ical_{1} := \Ical_{v} \cap \Gcal_{K|T}$, thus $\Ical_{1}^{\perp} \subset \Ical_{0}^{\perp}$.
  Thus $v(\Ical_{1}^{\perp})$ contains no nontrivial convex subgroups, so that $v$ is indeed the valuation associated to $\Ical_{1}$ due to the characterization from Lemma~\ref{lemma:main-valuative}.

  We must show that
  \[ \Ical_{v} \cap \Gcal_{K|T} = \Zcal_{K}(\Dcal_{v} \cap \Gcal_{K|T}) \neq \Dcal_{v} \cap \Gcal_{K|T}. \]
  Recall that $\Dcal_{v} \cap \Gcal_{K|T} / \Ical_{v} \cap \Gcal_{K|T} \cong \Gcal_{Kv|Tv}$.
  Since $Tv \subset kv^{\times}$ we find that $\Gcal_{Kv|kv} \subset \Gcal_{Kv|Tv}$, while $\trdeg(Kv|kv) \geq 1$ ensures that $\Gcal_{Kv|kv}$ is infinite dimensional.
  Thus $\Ical_{v} \cap \Gcal_{K|T} \neq \Dcal_{v} \cap \Gcal_{K|T}$.

  Thus all that remains to show is that $\Ical_{v} \cap \Gcal_{K|T} = \Zcal_{K}(\Dcal_{v} \cap \Gcal_{K|T})$.
  Note that $\Ical_{v} \cap \Gcal_{K|T} \subset \Zcal_{K}(\Dcal_{v} \cap \Gcal_{K|T})$ by Lemma~\ref{lemma:Rcal_residue_compat}, so we only need to show the other inclusion.
  As noted above, the image of the composition
  \[ \Dcal_{v} \cap \Gcal_{K|T} \hookrightarrow \Dcal_{v} \twoheadrightarrow \Dcal_{v}/\Ical_{v} \cong \Gcal_{Kv} \]
  is precisely $\Gcal_{Kv|Tv}$.
  By Lemma~\ref{lemma:Rcal_residue_compat}, it follows that the image of $\Zcal_{K}(\Dcal_{v} \cap \Gcal_{K|T})$ in $\Gcal_{Kv|Tv}$ is precisely $\Zcal_{Kv}(\Gcal_{Kv|Tv})$.
  It suffices to show that this image, or equivalently $\Zcal_{Kv}(\Gcal_{Kv|Tv})$, is trivial.

  We now have two cases to consider.
  If $\Zcal_{Kv}(\Gcal_{Kv|Tv}) = \Gcal_{Kv|Tv}$ then by Theorem~\ref{theorem:main_acl}, there exists a valuation $w$ of $Kv$ such that $\Gcal_{Kv|Tv} \subset \Dcal_{w}$ and such that $\Ical_{w} \cap \Gcal_{Kv|Tv}$ has codimension $\le 1$ in $\Gcal_{Kv|Tv}$.
  Lemma~\ref{lemma:pos_trdeg_non_henselian} shows that $w$ is trivial and thus $\Ical_{w}$ is trivial, which is impossible since $\Gcal_{Kv|kv} \subset \Gcal_{Kv|Tv}$ is infinite dimensional.

  Thus we have $\Zcal_{Kv}(\Gcal_{Kv|Tv}) \neq \Gcal_{Kv|Tv}$.
  In this case, Proposition~\ref{proposition:centre_valuative} shows that $\Zcal_{Kv}(\Gcal_{Kv|Tv})$ is valuative and, letting $w$ denote the associated valuation, one has $\Gcal_{Kv|Tv} \subset \Dcal_{w}$.
  Again, Lemma~\ref{lemma:pos_trdeg_non_henselian} shows that $w$ is trivial, hence $\Ical_{w}$ is trivial, so $\Zcal_{Kv}(\Gcal_{Kv|Tv})$ is trivial as well.
  This concludes the proof of the proposition.
\end{proof}

\section{Algebraic dependence}\label{section:algebraic_dependence}

Let $k$ be a relatively algebraically closed subfield of $K$.
Our goal in this section is to provide a characterization of algebraic dependence (over $k$) in $K$ using the algebra $\Kcal_{*}(K|k)$, or equivalently, using the structure $\Ascr(K|k)$.

\subsection{Milnor-closed subspaces}

Let $T$ be a subgroup of $K^{\times}$ and $\Hcal$ a subspace of $\Kcal_{K|T}$.
We say that $\Hcal$ is \emph{Milnor-closed} provided that for all nontrivial $s \in \Hcal$ and $t \in \Kcal_{K|T}$ such that $\{s,t\} = 0$ in $\Kcal_{2}(K|T)$, one has $t \in \Hcal$ as well.
Any subset $S$ of $\Kcal_{K|T}$ has a \emph{Milnor-closure} which is the smallest Milnor-closed subspace $\Hcal$ of $\Kcal_{K|T}$ that contains $S$.
Explicitly, the Milnor-closure of $S$ can be computed as a union
\[ \Hcal = \bigcup_{n = 0}^{\infty} \Hcal_{n} \]
where $\Hcal_{0}$ is the subspace generated by $S$, and $\Hcal_{n+1}$ is the subspace generated by $\Hcal_{n}$ and all $t \in \Kcal_{K|T}$ such that there exists some nontrivial $s \in \Hcal_{n}$ where $\{s,t\} = 0$ in $\Kcal_{2}(K|T)$.

\begin{lemma}\label{lemma:milnor_closure_description}
  Let $\Hcal$ be a Milnor-closed subspace of $\Kcal_{K|T}$.
  Let $H$ be the preimage of $\Hcal$ with respect to the canonical map $K^{\times} \to \Kcal_{K|T}$.
  Then $H$ contains the following:
  \begin{enumerate}
    \item The saturation of $T$.
    \item Elements of $K^{\times}$ of the form $a + b \cdot h$ for any $h \in H$ whose image in $\Kcal_{K|T}$ is nontrivial and any $a,b$ in the saturation of $T$.
  \end{enumerate}
\end{lemma}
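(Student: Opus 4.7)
My plan is to handle (1) essentially by unpacking definitions, then prove (2) via a single well-chosen Steinberg relation combined with the Milnor-closed property of $\Hcal$, after a trivial reduction.

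For (1), I would observe that the preimage $H$ automatically contains the kernel of the canonical map $K^{\times} \to \Kcal_{K|T} = \Qbb \otimes_{\Zbb}(K^{\times}/T)$. Since tensoring with $\Qbb$ kills exactly the torsion of $K^{\times}/T$, this kernel is precisely the saturation of $T$ (the set of $x \in K^{\times}$ with some positive power in $T$), which is what (1) asserts.

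For (2), let $S$ denote the saturation of $T$, a multiplicative subgroup of $K^{\times}$ that contains $-1$ (since $(-1)^{2} = 1 \in T$); by (1) every element of $S$ has trivial image in $\Kcal_{K|T}$. Given $a,b \in S$ and $h \in H$ with $\bar h \in \Hcal$ nontrivial, and assuming $a + b h \in K^{\times}$ (i.e.~$a + b h \ne 0$), I would factor $a + b h = a \cdot (1 + c h)$ where $c := b/a \in S$; since $\bar a = 0$, it suffices to show $1 + c h \in H$. For this I would apply the Steinberg relation to the pair $(-c h,\ 1 + c h)$, whose sum equals $1$ and whose members are both nonzero. This yields $\{-c h,\ 1 + c h\} = 0$ in $\Kcal_{2}(K|T)$. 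Because $-1$ and $c$ both lie in $S$ and therefore have trivial image, the image of $-c h$ in $\Kcal_{K|T}$ equals $\bar h$, which is nontrivial by hypothesis. The Milnor-closed property of $\Hcal$ then forces $\overline{1 + c h} \in \Hcal$, i.e.~$1 + c h \in H$, and multiplying back by $a$ gives $a + b h \in H$.

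The main conceptual point---and really the only thing requiring any thought---is the choice of the Steinberg pair $(-c h,\ 1 + c h)$, which exploits the fact that $-c h$ has the same image in $\Kcal_{K|T}$ as the nontrivial element $\bar h$, while $1 + c h$ is exactly the element we want to capture. There is no essential obstacle beyond this: once one notices that the saturation of $T$ is a multiplicative subgroup of $K^{\times}$ containing $-1$, the rest is formal manipulation of the Steinberg relation in the quotient $\Kcal_{2}(K|T)$.
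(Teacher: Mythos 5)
Your proposal is correct and follows essentially the same route as the paper: part (1) by identifying the kernel of $K^{\times} \to \Kcal_{K|T}$ with the saturation, and part (2) by a single Steinberg relation combined with the Milnor-closed property, after using the fact that the saturation has trivial image. The paper manipulates the symbol $\{h, a+bh\}$ directly into Steinberg form via the chain $\{h, a+bh\} = \{h, 1-(-ba^{-1})h\} = \{(-ba^{-1})h, 1-(-ba^{-1})h\} = 0$, whereas you factor out $a$ first and apply Steinberg to $(-ch, 1+ch)$; these are the same computation, merely presented in a slightly different order.
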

\begin{proof}
  Let $R$ denote the saturation of $T$.
  It is clear that $H$ contains $R$.
  Recall that we have a natural morphism of graded rings
  \[ \KM_{*}(K|R) \to \Kcal_{*}(K|R) = \Kcal_{*}(K|T). \]
  Consider elements of $K^{\times}$ the form $a + b \cdot h$ as described in (3).
  We calculate some symbols in $\KM_{2}(K|R)$:
  \begin{align*}
    \{h, a + b \cdot h\} &= \{h, 1 - (-b \cdot a^{-1}) \cdot h \} \\
                         &= \{(-b \cdot a^{-1}) \cdot h, 1 - (-b \cdot a^{-1}) \cdot h\} \\
                         &= 0.
  \end{align*}
  The first equality follows from the fact that $a^{-1} \in R$, the second from the fact that $-b \cdot a^{-1} \in R$, and the last from the Steinberg relations in Milnor K-theory.
  Since $\Hcal$ is Milnor-closed and $h \in H$ it follows that $a + b \cdot h \in H$ as well.
\end{proof}

\subsection{Detecting algebraic dependence}

\begin{lemma}\label{lemma:mod_units_exterior}
  Let $v$ be a valuation on $K$.
  The canonical map
  \[ \wedge^{*} (\Kcal_{K|\Urm_{v}}) \to \Kcal_{*}(K|\Urm_{v}) \]
  given by the identity in degree one is an isomorphism.
\end{lemma}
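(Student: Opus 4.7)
The plan is to construct an inverse to the natural graded algebra morphism $\psi : \wedge^{*}_{\Qbb} \Kcal_{K|\Urm_{v}} \to \Kcal_{*}(K|\Urm_{v})$ extending the identity in degree one. The map $\psi$ is well-defined because Milnor K-theory is graded anti-commutative, and for any $x \in K^{\times}$ the identity $\{x, -x\} = 0$ in $\KM_{2}(K)$ yields $\{x,x\} = -\{x,-1\} \in \langle \Urm_{v} \rangle$, since $-1 \in \Urm_{v}$. Consequently $\bar{x} \cdot \bar{x} = 0$ in $\Kcal_{2}(K|\Urm_{v})$ for every $\bar{x} \in \Kcal_{K|\Urm_{v}}$, which verifies the universal property of the exterior algebra. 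Moreover $\psi$ is clearly surjective since $\Kcal_{*}(K|\Urm_{v})$ is generated in degree one.

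For the inverse $\phi$, the idea is to extend the canonical map $K^{\times} \to \Kcal_{K|\Urm_{v}}$ multiplicatively via the universal property of the tensor algebra, obtaining a ring map $\T_{*}(K^{\times}) \to \wedge^{*}_{\Qbb} \Kcal_{K|\Urm_{v}}$. Elements of $\Urm_{v}$ vanish in degree one, so this descends past $\langle \Urm_{v} \rangle$; since the target is a $\Qbb$-algebra, it also factors through the rationalization. The only remaining condition is that it must kill the Steinberg relations, i.e.,\ that $\bar{x} \wedge \bar{y} = 0$ in $\wedge^{2}_{\Qbb} \Kcal_{K|\Urm_{v}}$ whenever $x, y \in K^{\times}$ satisfy $x + y = 1$.

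This verification, which is the crux of the argument, is an elementary case analysis on $v(x), v(y)$. If either $x$ or $y$ lies in $\Urm_{v}$, the corresponding class vanishes. If $v(x) > 0$, the ultrametric inequality gives $v(y) = v(1-x) = 0$, so $y \in \Urm_{v}$, and symmetrically for $v(y) > 0$. Lastly, if $v(x) < 0$ (symmetrically if $v(y) < 0$), then $v(y) = v(1-x) = v(x)$ by the ultrametric inequality, so $y/x \in \Urm_{v}$, yielding $\bar{y} = \bar{x}$ in $\Kcal_{K|\Urm_{v}}$ and hence $\bar{x} \wedge \bar{y} = 0$. Once $\phi$ is well-defined, both $\phi \circ \psi$ and $\psi \circ \phi$ restrict to the identity in degree one, so they coincide with the identity globally since both algebras are generated in degree one. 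I foresee no significant obstacle: the case analysis above is routine, and the main conceptual point is simply the correct orchestration of the universal properties of the tensor and exterior algebras together with the quotient by $\langle \Urm_{v} \rangle$ and the rationalization.
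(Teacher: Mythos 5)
Your proof is correct and, at its core, uses the same idea as the paper: the map $\wedge^{*}(\Kcal_{K|\Urm_{v}}) \to \Kcal_{*}(K|\Urm_{v})$ is surjective, and the essential point is that the Steinberg relations $\bar{x} \wedge \bar{y}$ (with $x + y = 1$) already vanish in $\wedge^{2}(\Kcal_{K|\Urm_{v}})$, verified by the same ultrametric case analysis on $v(x),v(y)$ (one of $v(x), v(y)$ is zero, or $v(x) = v(y)$). The paper phrases this as observing the kernel of the surjection is generated by images of Steinberg pairs and showing those generators are zero, whereas you package the same computation as constructing an explicit inverse via the universal property of the tensor algebra — a presentational choice rather than a mathematical difference.
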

\begin{proof}
  Note that the map is surjective and that $\Urm_{v} = \Ical_{v}^{\perp}$.
  We will identify $\Kcal_{K|\Urm_{v}}$ with $\Qbb \otimes_{\Zbb} vK$ and for $t \in K^{\times}$, we abuse the notation and write $v(t)$ for the image of $t$ in $\Qbb \otimes_{\Zbb} vK = \Kcal_{K|\Urm_{v}}$.
  The kernel of the map in question is generated by $r_{x,y} := v(x) \wedge v(y)$ where $x,y \in K^{\times}$ satisfy $x + y = 1$.
  We claim that all such $r_{x,y}$ are already trivial.
  If $v(x) = 0$ or $v(y) = 0$ then $r_{x,y} = 0$, so there is nothing to show.
  If $v(x) > 0$ then $v(y) = 0$ so $r_{x,y} = 0$, and similarly if $v(y) > 0$.
  Otherwise, $v(x) < 0$ and $v(y) = v(x)$ so again $r_{x,y} = 0$.
  In any case, the kernel is trivial, so the map in question is an isomorphism.
\end{proof}

\begin{lemma}\label{lemma:detect_independence}
  Suppose that $k$ is a subfield of $K$ and $T$ is a subgroup of $K^{\times}$ which is contained in $k^{\times}$.
  Let $t_{1},\ldots,t_{n} \in K^{\times}$ be given, and let $\bar t_{i}$ denote the image of $t_{i}$ in $\Kcal_{K|T}$.
  If $\{\bar t_{1},\ldots,\bar t_{n}\} = 0$ in $\Kcal_{n}(K|T)$, then $t_{1},\ldots,t_{n}$ are algebraically dependent over $k$.
\end{lemma}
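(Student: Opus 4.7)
\medskip

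\noindent\textbf{Proof proposal.} I will argue by contraposition: assuming $t_{1},\ldots,t_{n}$ are algebraically independent over $k$, I will exhibit a quotient of $\Kcal_{*}(K|T)$ in which the symbol $\{\bar t_{1},\ldots,\bar t_{n}\}$ is visibly nonzero. The target quotient will be $\Kcal_{*}(K|\Urm_{v})$ for a suitable valuation $v$, where Lemma~\ref{lemma:mod_units_exterior} identifies this algebra with an exterior algebra and converts the nonvanishing of the symbol into linear independence of valuation values.

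The plan is as follows. First, I would construct a valuation $v$ of $K$ that is trivial on $k$ and such that $v(t_{1}),\ldots,v(t_{n})$ are $\Zbb$-linearly independent in $vK$. Concretely, on the purely transcendental subfield $k(t_{1},\ldots,t_{n})$ one defines $v$ with value group $\Zbb^{n}$ (in lexicographic order) by sending $t_{i}$ to the $i$-th standard basis vector and extending to polynomials via the minimum of the exponent vectors of the monomials with nonzero coefficient; this is a well-defined valuation trivial on $k$. Any extension of this valuation to $K$ (which exists by Chevalley's extension theorem) then has the required properties, since the value group of an extension contains the value group of the restriction.

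Next, since $T \subset k^{\times}$ and $k^{\times} \subset \Urm_{v}$ (as $v$ is trivial on $k$), the quotient map $K^{\times}/T \twoheadrightarrow K^{\times}/\Urm_{v}$ induces a surjective morphism of graded $\Qbb$-algebras
\[
    \Kcal_{*}(K|T) \twoheadrightarrow \Kcal_{*}(K|\Urm_{v}).
\]
Applying Lemma~\ref{lemma:mod_units_exterior}, the target is canonically identified with $\wedge^{*}(\Qbb \otimes_{\Zbb} vK)$, and the image of $\{\bar t_{1},\ldots,\bar t_{n}\}$ is $v(t_{1}) \wedge \cdots \wedge v(t_{n})$. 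By the choice of $v$, the elements $v(t_{1}),\ldots,v(t_{n})$ are $\Qbb$-linearly independent in $\Qbb \otimes_{\Zbb} vK$, so this wedge is nonzero. Consequently $\{\bar t_{1},\ldots,\bar t_{n}\}$ is nonzero in $\Kcal_{n}(K|T)$, contradicting the hypothesis and completing the proof.

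The only nontrivial input is the valuation-theoretic construction in the first step, but this is entirely standard (the Gauss-type lexicographic valuation on $k(t_{1},\ldots,t_{n})$ followed by Chevalley extension). Once the valuation is in hand, the rest is a formal application of Lemma~\ref{lemma:mod_units_exterior} together with the obvious functoriality of $\Kcal_{*}(K|-)$ with respect to enlarging the quotient subgroup. There is no genuine obstacle; the content is entirely in recognizing that algebraic independence is detected by a suitable valuation and that the exterior-algebra description of Milnor K-theory modulo units converts this to a triviality.
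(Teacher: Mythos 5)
Your proposal is correct and follows essentially the same path as the paper's own proof: both argue by contraposition, construct a rank-$n$ valuation $v$ trivial on $k$ (hence on $T$) with $v(t_{1}),\ldots,v(t_{n})$ rationally independent, and then apply Lemma~\ref{lemma:mod_units_exterior} to detect the nonvanishing of the symbol in the exterior algebra $\wedge^{*}(\Qbb \otimes_{\Zbb} vK)$. The only cosmetic difference is that you build the valuation as a Gauss-type lexicographic monomial valuation, whereas the paper invokes the rank-$n$ valuation attached to the regular sequence $(t_{1},\ldots,t_{n})$; these produce the same kind of object and either works.
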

\begin{proof}
  If $t_{1},\ldots,t_{n}$ are algebraically independent, then we may find a $k$-valuation $v$ on $K$ such that $v(t_{1}),\ldots,v(t_{n})$ are linearly independent in $\Qbb \otimes_{\Zbb} vK = \Kcal_{K|\Urm_{v}}$.
  For example, we can take the discrete rank $n$ valuation on $k(t_{1},\ldots,t_{n})$ associated to the regular sequence $(t_{1},\ldots,t_{n})$ and choose $v$ to be some prolongation of this valuation to $K$.
  Since $T \subset k^{\times} \subset \Urm_{v}$, the assertion follows from Lemma~\ref{lemma:mod_units_exterior}.
\end{proof}

\subsection{Geometric lattices}

Suppose that $k$ is a relatively algebraically closed subfield of $K$.
The collection of all relatively algebraically closed subextension of $K|k$ will be denoted by $\Gbb(K|k)$.
This is a \emph{complete} lattice, with respect to inclusion of subfields of $K$, meaning that any set has a greatest lower bound (the infimum) and a smallest upper bound (the supremum).
In this case, the infimum is computed by taking intersections in $K$, and the supremum is computed by taking the relative algebraic closure of the compositum in $K$.
We call $\Gbb(K|k)$ the \emph{geometric lattice} associated to $K|k$.

For a field $F$, write $F^{i}$ for the perfect closure of $F$.
Note that restriction along $K \hookrightarrow K^{i}$ induces an isomorphism of geometric lattices
\[ \Gbb(K^{i}|k^{i}) \cong \Gbb(K|k), \]
where the inverse is given by $M \mapsto M^{i}$.
We will make use of the following result from~\cite{zbMATH05350217}, which fundamentally relies on the work of Evans-Hrushovski~\cite{zbMATH00007333,zbMATH00839199} based on the \emph{group configuration theorem}.
This theorem should be thought of as an analogue of the fundamental theorem of projective geometry, but for an incidence geometry associated to $\Gbb(K|k)$ as opposed to a projective space.

\begin{theorem}[\cite{zbMATH05350217}, Theorem 4.2]\label{theorem:Evans-Hrushovski-Gismatullin}
  Suppose that $K|k$ and $L|l$ are relatively algebraically closed extensions of fields.
  Assume that $\trdeg(K|k) \geq 5$, and that $\phi : \Gbb(K|k) \cong \Gbb(L|l)$ is an isomorphism of geometric lattices.
  Then there exists an isomorphism $\Phi : K^{i} \cong L^{i}$ of fields satisfying $\Phi(k^{i}) = l^{i}$ such that $\phi(M) = \Phi(M^{i}) \cap L$ for all $M \in \Gbb(K|k)$.
  Furthermore, $\Phi$ is unique with these properties up-to composition with some power of the $p$-power Frobenius $x \mapsto x^{p}$, where $p$ is the characteristic exponent of $K$.
\end{theorem}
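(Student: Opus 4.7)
The plan is to invoke the group configuration theorem of Hrushovski, in the field-reconstruction form due to Evans-Hrushovski and Gismatullin, which plays for the algebraic closure pregeometry the role that the fundamental theorem of projective geometry plays for incidence geometries of subspaces. The geometric lattice $\Gbb(K|k)$ is exactly the lattice of flats of the $k$-algebraic closure pregeometry on $K$: its atoms are the relative algebraic closures $\acl_{k}(\alpha) \cap K$ of individual transcendental elements $\alpha \in K \smin k$, and the rank of a flat is its transcendence degree over $k$. Consequently the isomorphism $\phi$ transports not only the lattice data but the entire matroid structure — dimension and dependence on atoms — from $K|k$ to $L|l$, and the remaining task is to promote this combinatorial datum to an isomorphism of fields.

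The crucial step is to locate, inside the matroid of $K|k$, the six-point configurations (group configurations) arising from the graphs of the field operations: tuples of atoms whose pattern of dependencies is forced by relations of the form $z = x + y$ or $z = x \cdot y$ among transcendental elements of $K$. The hypothesis $\trdeg(K|k) \geq 5$ is exactly what guarantees enough room for such configurations to be realised generically, with sufficient free parameters for the conclusion of the group configuration theorem to apply non-degenerately. From such configurations Evans-Hrushovski-Gismatullin extract interpretable group structures on sets of atoms, verify that the additive and multiplicative operations so obtained combine into a field structure, and identify the resulting field with $K^{i}$ — the perfect closure appears because powers of Frobenius act as the identity on the pregeometry, so the reconstruction cannot distinguish $K$ from $K^{i}$. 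Running this procedure in parallel on the $L|l$ side and matching via $\phi$ produces the field isomorphism $\Phi \colon K^{i} \cong L^{i}$ with $\Phi(k^{i}) = l^{i}$, and the uniqueness up to a power of Frobenius records precisely the residual indeterminacy in the reconstruction.

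The hard part will be the group configuration theorem itself, a deep result in geometric stability theory whose field-reconstruction upgrade in arbitrary characteristic, with a nontrivial base field $k$ of parameters, constitutes the bulk of the work in the Evans-Hrushovski and Gismatullin papers. Secondary technicalities are checking that configurations of the combinatorial shape demanded by the theorem are actually realised by generic transcendental tuples under the assumption $\trdeg(K|k) \geq 5$, and bookkeeping the canonical parameters finely enough to pin down the Frobenius indeterminacy in the uniqueness clause; everything else in the argument is formal transport of structure along $\phi$.
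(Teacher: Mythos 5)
Your proposal sketches the internal strategy of the Evans--Hrushovski--Gismatullin theorem itself (group configurations, interpretable groups, etc.), which the paper treats entirely as a black box by citing \cite[Theorem~4.2]{zbMATH05350217}. As such you are effectively re-explaining the cited result rather than proving the statement as it appears here, and in doing so you skip over the two pieces of actual work that the paper's proof supplies: the translation between the two inequivalent formulations of $\Gbb(K|k)$, and the extension of the conclusion from atoms to arbitrary flats.

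Concretely, the cited references define $\Gbb(K|k)$ to be a \emph{combinatorial geometry}: the set of rank-one relatively algebraically closed subextensions together with a closure operator. The present paper instead defines $\Gbb(K|k)$ as the \emph{complete lattice} of all relatively algebraically closed subextensions. These determine each other (atoms of the lattice recover the point set; the closure of $S$ is the set of atoms below $\Sup(S)$; conversely the lattice is the lattice of closed subsets), but that equivalence is precisely what has to be checked before the cited theorem can be applied to an isomorphism $\phi : \Gbb(K|k) \cong \Gbb(L|l)$ in the paper's sense. Your proposal never addresses this mismatch. Furthermore, the cited theorem only gives $\phi'(M) = \Phi(M^{i}) \cap L$ for rank-one $M$, i.e.\ for atoms; the statement here asserts it for \emph{all} $M \in \Gbb(K|k)$, and the paper closes this gap by invoking atomisticity of the geometric lattice (every flat is the supremum of the atoms it dominates). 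Your proposal omits this step as well. So while the background you describe is accurate, the proposal misses the proof that the paper actually gives: a short functorial dictionary between lattices and combinatorial geometries plus an atomisticity argument, with the deep model-theoretic machinery delegated wholesale to the citation.
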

\begin{proof}
  Since $\Gbb(K|k) \cong \Gbb(L|l)$, we have $\trdeg(L|l) = \trdeg(K|k) \geq 5$, as the transcendence degree of a relatively algebraically closed field extension is the Krull dimension of the associated geometric lattice.

  The only part that doesn't follow immediately from~\cite[Theorem 4.2]{zbMATH05350217} is that~\cite{zbMATH00007333,zbMATH00839199,zbMATH05350217} all write $\Gbb(K|k)$ for the \emph{combinatorial geometry} associated to $K|k$ as opposed to the geometric lattice as we have defined above.
  But the two approaches are easily seen to be equivalent (this is a very well-known fact of matroid theory).

  Indeed, let us write $\Gbb'(K|k)$ for the combinatorial geometry associated to $K|k$.
  This object refers to the set of all relatively algebraically closed subextensions $M$ of $K|k$ such that $\trdeg(M|k) = 1$, and $\Gbb'(K|k)$ is endowed with a \emph{closure operator} $\cl$ which associates to a subset $S \subset \Gbb'(K|k)$ the set $\cl(S)$ of all $M \in \Gbb'(K|k)$ such that $M \subset \bar{k(S)} \cap K$.
  A subset $S$ of $\Gbb'(K|k)$ is called \emph{closed} provided that $\cl(S) = S$.

  One can functorially recover $\Gbb'(K|k)$ from $\Gbb(K|k)$, and vice-versa, as follows.
  First note that $\Gbb(K|k)$ has a unique minimal element $\bot$ corresponding to the field $k$.
  Next, note that $\Gbb'(K|k)$ is the set of \emph{atoms} of $\Gbb(K|k)$, i.e.~the set of elements $M$ of $\Gbb(K|k)$ which are different from $\bot$ and minimal with that property.
  The closure operator $\cl$ is obtained using the lattice structure of $\Gbb(K|k)$ as follows:
  \[ \cl(S) = \{M \in \Gbb'(K|k) \ | \ M \le \Sup(S) \}, \]
  where $\Sup(S)$ denotes the supremum of $S$ in $\Gbb(K|k)$.
  Conversely, one may identify $\Gbb(K|k)$ with the lattice of closed subsets of $\Gbb'(K|k)$.

  Going back to the context of the theorem, we have an isomorphism $\phi : \Gbb(K|k) \cong \Gbb(L|l)$ of lattices, which induces an isomorphism $\phi' : \Gbb'(K|k) \cong \Gbb'(L|l)$ of combinatorial geometries.
  By~\cite[Theorem 4.2(ii)]{zbMATH05350217}, there exists an isomorphism $\Phi : K^{i} \cong L^{i}$ of fields with $\Phi(k^{i}) = l^{i}$ such that for all $M \in \Gbb'(K|k)$, one has $\phi'(M) = \Phi(M^{i}) \cap L$.

  It remains to show that $\phi(M) = \Phi(M^{i}) \cap L$ for \emph{all} $M \in \Gbb(K|k)$, but this follows easily from the fact that $\Gbb(K|k)$ and $\Gbb(L|l)$ are \emph{atomistic} lattices, meaning that every element $M$ of $\Gbb(K|k)$ is the supremum of the atoms it bounds from above (and similarly for $\Gbb(L|l)$).
\end{proof}

\subsection{Geometric subspaces}

Suppose that $k$ is a relatively algebraically closed subfield of $K$.
Let $L$ be any subextension of $K|k$.
Then the canonical map
\[ \Kcal_{L|k} \to \Kcal_{K|k} \]
is injective, and we will identify $\Kcal_{L|k}$ with its image in $\Kcal_{K|k}$.
A subspace of $\Kcal_{K|k}$ will be called \emph{geometric} if it is of the form $\Kcal_{L|k}$ for some \emph{relatively algebraically closed} subextension $L$ of $K|k$.
The collection of all geometric subspaces of $\Kcal_{K|k}$ will be denoted by $\Gbb_{\Kcal}(K|k)$, considered as a poset with respect to inclusion in $\Kcal_{K|k}$.

\begin{proposition}\label{proposition:geometric_lattice_iso}
  The canonical map $\Gbb(K|k) \to \Gbb_{\Kcal}(K|k)$ sending $L \in \Gbb(K|k)$ to $\Kcal_{L|k}$ is an order isomorphism $\Gbb(K|k) \cong \Gbb_{\Kcal}(K|k)$.
  In particular, $\Gbb_{\Kcal}(K|k)$ is also a complete lattice.
\end{proposition}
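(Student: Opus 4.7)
The plan is to show that $L$ can be reconstructed from $\Kcal_{L|k}$ as a subset of $K^\times$, which will give injectivity, and then to observe that the inverse map is order-preserving, giving the isomorphism. Completeness of $\Gbb_{\Kcal}(K|k)$ will then be transported from the known completeness of $\Gbb(K|k)$.

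First I would analyze the canonical projection $\pi \colon K^\times \to \Kcal_{K|k}$. Since $k$ is relatively algebraically closed in $K$, the subgroup $k^\times$ is saturated in $K^\times$, so $K^\times/k^\times$ is torsion-free and $\pi$ identifies $K^\times/k^\times$ with a $\Zbb$-submodule of the $\Qbb$-vector space $\Kcal_{K|k}$. For $L \in \Gbb(K|k)$, the subspace $\Kcal_{L|k}$ is exactly the $\Qbb$-span of $\pi(L^\times)$, and its preimage $\pi^{-1}(\Kcal_{L|k})$ is the saturation of $L^\times \cdot k^\times = L^\times$ in $K^\times$. The key step is then the observation that $L^\times$ itself is already saturated in $K^\times$: if $t \in K^\times$ satisfies $t^n \in L^\times$ for some $n \geq 1$, then $t$ is a root of $X^n - t^n \in L[X]$, hence algebraic over $L$, hence in $L$ by relative algebraic closedness. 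Consequently $\pi^{-1}(\Kcal_{L|k}) = L^\times$.

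From this reconstruction formula, injectivity of $L \mapsto \Kcal_{L|k}$ is immediate: if $\Kcal_{L|k} = \Kcal_{L'|k}$ then $L^\times = \pi^{-1}(\Kcal_{L|k}) = \pi^{-1}(\Kcal_{L'|k}) = {L'}^\times$, and hence $L = L'$. Surjectivity holds by the definition of $\Gbb_{\Kcal}(K|k)$. The map is clearly order-preserving, and the same reconstruction formula shows the inverse is order-preserving as well: $\Kcal_{L|k} \subseteq \Kcal_{L'|k}$ forces $L^\times \subseteq {L'}^\times$, hence $L \subseteq L'$. This establishes the order isomorphism.

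Finally, for the ``in particular'' statement, I would simply transport the lattice structure from $\Gbb(K|k)$: since $\Gbb(K|k)$ is a complete lattice and order isomorphisms preserve arbitrary infima and suprema (they exist in one poset iff they exist in the other, and are sent to one another), $\Gbb_{\Kcal}(K|k)$ is complete as well. There is no genuine obstacle here; the only delicate point is the saturation argument for $L^\times$, which reduces to elementary field theory given the relative algebraic closedness hypothesis on both $k \subset K$ and $L \subset K$.
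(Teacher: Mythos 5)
Your proposal is correct and takes essentially the same approach as the paper: both rest on the observation that if the image of $t \in K^\times$ lies in $\Kcal_{L|k}$ then $t^n \in L^\times$ for some $n \geq 1$ (up to a factor in $k^\times \subset L^\times$), whence $t \in L$ by relative algebraic closedness of $L$ in $K$. You package this as a reconstruction formula $\pi^{-1}(\Kcal_{L|k}) = L^\times$ before deducing the order isomorphism, whereas the paper proves monotonicity of the inverse directly, but the underlying argument is identical.
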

\begin{proof}
  This map is clearly monotone and surjective.
  Conversely, suppose that $L_{1}$ and $L_{2}$ are two elements of $\Gbb(K|k)$, that $\Kcal_{L_{1}|k} \le \Kcal_{L_{2}|k}$ and that $t \in L_{1}$ is some element.
  Letting $\bar t$ denote the image of $t$ in $\Kcal_{K|k}$, we have $\bar t \in \Kcal_{L_{1}|k}$, which is contained in $\Kcal_{L_{2}|k}$.
  Thus there exists some positive integer $n$ and some constant $c \in k^{\times}$ such that $c \cdot t^{n} \in L_{2}^{\times}$.
  This implies that $t$ is algebraic over $L_{2}$ and thus $t \in L_{2}$.
  In other words, we have $L_{1} \le L_{2}$ in $\Gbb(K|k)$.
  In particular, the map in question is then injective, hence bijective, and its inverse is also monotone.
\end{proof}

\subsection{Milnor closure vs.~algebraic closure}

We continue to work with a relatively algebraically closed subfield $k$ of $K$.
Recall that we have a canonical pairing
\[ \Kcal_{K|k} \times \Gcal_{K|k} \to \Qbb. \]
For the rest of this subsection, we will use the notation $\Ecal^{\perp}$ to denote the orthogonal of a subspace $\Ecal \subset \Gcal_{K|k}$ with respect to the above pairing.
Although this overloads the notation $(-)^{\perp}$ introduced previously, it should be clear from context when $\Ecal^{\perp}$ refers to a subspace of $\Kcal_{K|k}$ as opposed to a subgroup of $K^{\times}$.

The following theorem is the technical core of this paper.
It provides a characterization of the elements $\Gbb_{\Kcal}(K|k)$ as subspaces of $\Kcal_{K|k}$.

\begin{theorem}\label{theorem:recover_Kcal_geom_lattice}
  Assume that $\trdeg(K|k) \geq 2$.
  Let $k$ be a relatively algebraically closed subfield of $K$, and let $\Hcal$ be a Milnor-closed subspace of $\Kcal_{K|k}$.
  Let $H$ denote the preimage of $\Hcal$ with respect to the map $K^{\times} \to \Kcal_{K|k}$, and let $L$ denote the relative algebraic closure of $k(H)$ in $K$.
  Let $\Vscr_{\Hcal}$ denote the collection of all closed subspaces $\Dcal$ of $\Gcal_{K|k}$ satisfying the following conditions:
  \begin{enumerate}
    \item One has $\Zcal_{K}(\Dcal) \neq \Dcal$.
    \item One has $\Ccal_{K}(\Zcal_{K}(\Dcal)) \cap \Gcal_{K|k} = \Dcal$.
    \item One has $\Dcal^{\perp} \cap \Hcal = 0$.
  \end{enumerate}
  Then one has
  \[ \Kcal_{L|k} = \bigcap_{\Dcal \in \Vscr_{\Hcal}} \Zcal_{K}(\Dcal)^{\perp}. \]
  Conversely, any geometric subspace of $\Kcal_{K|k}$ arises in this way.
  More precisely, if $\Hcal = \Kcal_{L|k}$ for $L \in \Gbb(K|k)$, then $\Hcal$ is Milnor-closed and one has
  \[ \Hcal = \bigcap_{\Dcal \in \Vscr_{\Hcal}} \Zcal_{K}(\Dcal)^{\perp}. \]
\end{theorem}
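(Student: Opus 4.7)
The proof hinges on a structural observation: for any Milnor-closed subspace $\Hcal$ of $\Kcal_{K|k}$, the preimage $H \cup \{0\}$ is a subfield of $K$, so that $\Hcal = \Kcal_{k(H)|k}$. Closure of $H$ under products and inverses is immediate from $\Hcal$ being a subspace, and $k^{\times} \subseteq H$ by part~(1) of Lemma~\ref{lemma:milnor_closure_description}. For closure under addition, given $h_{1}, h_{2} \in H$ with $h_{1} + h_{2} \neq 0$, I write $h_{1} + h_{2} = h_{2} \cdot (1 + h_{1} h_{2}^{-1})$. Since $h_{1} h_{2}^{-1} \in H$, part~(2) of Lemma~\ref{lemma:milnor_closure_description} with $a = b = 1$ places $1 + h_{1} h_{2}^{-1}$ in $H$ whenever its image in $\Hcal$ is nontrivial; if that image is trivial then $h_{1}/h_{2} \in k^{\times}$ (since $k$ is relatively algebraically closed in $K$) and the sum lies in $k^{\times} \cdot H \subseteq H$ directly.

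For the forward inclusion $\Kcal_{L|k} \subseteq \bigcap_{\Dcal \in \Vscr_{\Hcal}} \Zcal_{K}(\Dcal)^{\perp}$, I apply Theorem~\ref{theorem:detect_visible_valuations} to identify $\Vscr_{\Hcal}$ with the set of $k$-visible valuations $v$ satisfying $\Ucal_{v|k}^{1} \cap \Hcal = 0$; under this correspondence $\Zcal_{K}(\Dcal)^{\perp} = \Ucal_{v|k}$. The key claim is that $v(k(H)^{\times}) \subseteq vk$ for any such $v$. Fix $h \in k(H)^{\times} \setminus k^{\times}$; by the subfield observation $1 - h \in k(H)^{\times} = H$. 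If $v(h) > 0$ then $1 - h \in \Urm_{v}^{1}$, so the image of $1-h$ lies in $\Hcal \cap \Ucal_{v|k}^{1} = 0$, forcing $1 - h \in k^{\times}$ and contradicting $h \notin k^{\times}$; the symmetric argument via $h^{-1} \in H$ rules out $v(h) < 0$. Hence $v(h) = 0$. Since algebraic extensions contribute only torsion to value groups, $v(L^{\times})$ is contained in the saturation of $vk$ in $vK$, which translates to $\Kcal_{L|k} \subseteq \Ucal_{v|k}$.

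For the reverse inclusion, given $t \in K^{\times}$ with $t \notin L$, transcendence of $t$ over $L$ (by relative algebraic closedness) allows me to extend the $t$-adic valuation on $L(t)$ (trivial on $L$, with $v(t) = 1$) to a valuation $v$ on $K$ chosen so that $\trdeg(Kv|kv) \geq 1$: this is automatic when $\trdeg(L|k) \geq 1$, and otherwise $\trdeg(K|k) \geq 2$ lets me Gauss-extend through a further transcendental element of $K$. Proposition~\ref{proposition:visible_of_trdeg_ge_one} then yields $k$-visibility, since $vk = 0$ is trivially saturated and contains no nontrivial convex subgroups. Condition~(3) is automatic: any $h \in H$ with $h^{n} \in k^{\times} \cdot \Urm_{v}^{1}$ would satisfy $h^{n} = a$ in $L$ for some $a \in k^{\times}$ (since $v$ is trivial on $L \ni h$), forcing $h \in k^{\times}$ and contradicting $\bar{h} \neq 0$. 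Since $v(t) = 1 \notin 0 = vk$, we conclude $\bar{t} \notin \Ucal_{v|k}$.

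The converse assertion (that $\Kcal_{L|k}$ is Milnor-closed for $L \in \Gbb(K|k)$) follows from Lemma~\ref{lemma:detect_independence}: if $s \in L^{\times}$ has $\bar{s} \neq 0$ and $\{s,t\} = 0$ in $\Kcal_{2}(K|k)$, then algebraic dependence of $s$ and $t$ over $k$ combined with the transcendence of $s$ (forced by $\bar{s} \neq 0$ and relative algebraic closedness of $k$) places $t$ algebraic over $k(s) \subseteq L$, whence $t \in L$. The main obstacle is the initial subfield observation; once it is in place, the forward inclusion collapses into a brief ultrametric calculation exploiting condition~(3), and the reverse inclusion follows from a routine valuation construction together with Proposition~\ref{proposition:visible_of_trdeg_ge_one}.
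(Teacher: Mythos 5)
Your proof is correct and follows essentially the same route as the paper. The central observation, that $H\cup\{0\}=k(H)$ is a subfield (equivalently $k(H)^\times=H$), is exactly the paper's crux, and both you and the paper reduce the problem via Theorem~\ref{theorem:detect_visible_valuations} to a statement about $k$-visible valuations; your forward-inclusion argument using $1-h$ and $h^{-1}$ is just the contrapositive of the paper's argument with $1+t$, and your reverse inclusion constructs the same kind of $t$-adic valuation extended to $K$ and verified via Proposition~\ref{proposition:visible_of_trdeg_ge_one}.
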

\begin{proof}
  Let $\Vscr_{\Hcal}$ be as in the statement of the theorem.
  By Theorem~\ref{theorem:detect_visible_valuations}, the subspaces $\Dcal \in \Vscr_{\Hcal}$ all have the form $\Dcal = \Dcal_{v} \cap \Gcal_{K|k}$ where $v$ is a $k$-visible valuation of $K$ such that $\Ucal_{v|k}^{1} \cap \Hcal = 0$.
  Furthermore, in this case one has $\Zcal_{K}(\Dcal) = \Ical_{v} \cap \Gcal_{K|k}$ hence also $\Zcal_{K}(\Dcal)^{\perp} = \Ucal_{v|k}$.
  Thus, the intersection in question is precisely 
  \[ \Delta := \bigcap_{v} \Ucal_{v|k} \]
  where $v$ varies over the $k$-visible valuations of $K$ such that $\Ucal_{v|k}^{1} \cap \Hcal = 0$.

  Let us first handle the case where $\Hcal = 0$, so $L = k$.
  By the above discussion, it suffices to show that for every nontrivial $x \in \Kcal_{K|k}$, there exists some $k$-visible valuation $v$ of $K$ such that $x \notin \Ucal_{v|k}$.
  Replace $x$ by $n \cdot x$ for some positive integer $n$ to assume without loss of generality that $x$ is the image of a transcendental element $t \in K^{\times}$.
  Extend $t$ to a transcendence base
  \[ \Bcal = \{t\} \cup \Bcal_{0} \]
  of $K|k$, write $M = k(\Bcal_{0})$, and let $v$ be a prolongation of the $t$-adic valuation on $M(t)$ to $K$.
  Mote that $Kv|Mv$ is algebraic and $Mv = M$ while $vk = 0$.
  By Proposition~\ref{proposition:visible_of_trdeg_ge_one} and our assumptions on $\trdeg(K|k)$, we see that $v$ is indeed visible, while $v(t) > 0$ by construction.
  Finally, if $x \in \Ucal_{v|k}$ then $c \cdot t \in \Urm_{v}$ for some $c \in k^{\times}$, but $v(c \cdot t) = v(t) > 0$, so this cannot happen.
  Thus the assertion of the theorem holds true in the case where $\Hcal = 0$.
  Assume for the rest of the proof that $\Hcal \neq 0$.

  In order to show that $\Delta = \Kcal_{L|k}$, it suffices to show that a $k$-visible valuation $v$ of $K$ is trivial on $L$ if and only if $\Ucal_{v|k}^{1} \cap \Hcal = 0$.
  Indeed, in this case $\Delta = \bigcap_{v} \Ucal_{v|k}$ where $v$ varies over the $k$-visible valuations which are trivial on $L$.
  Hence $\Kcal_{L|k} \subset \Delta$.
  If $x \in \Kcal_{K|k} \smin \Kcal_{L|k}$, then choose some $t \in K \smin L$ such that the image of $t$ in $\Kcal_{K|k}$ is $n \cdot x$ for some positive integer $n$.
  This $t$ is then transcendental over $L$.
  Complete $t$ to a transcendence basis $\Bcal = \{t\} \cup \Bcal_{0}$ of $K|L$, put $M := L(\Bcal_{0})$, and let $v$ be an extension of the $t$-adic valuation on $M(t)$ to $K$.
  Arguing similarly to the above, we see that $v$ is $k$-visible, trivial on $L$, and that $x \notin \Ucal_{v|k}$.
  This shows that indeed, $\Delta = \Kcal_{L|k}$ provided that a $k$-visible valuation is trivial on $L$ if and only if $\Ucal_{v|k}^{1} \cap \Hcal = 0$.

  Suppose that $v$ is trivial on $L$.
  We have a canonical injective map
  \[ L^{\times}/k^{\times} \to Kv^{\times}/kv^{\times} \]
  and after tensoring with $\Qbb$ we obtain
  \[ \Kcal_{L|k} \to \Kcal_{Kv|kv} \]
  which is again injective.
  This last map is precisely the composition
  \[ \Kcal_{L|k} \hookrightarrow \Ucal_{v|k} \twoheadrightarrow \Ucal_{v|k}/\Ucal_{v|k}^{1} = \Kcal_{Kv|kv} \]
  and thus $\Ucal_{v|k}^{1} \cap \Kcal_{L|k} = 0$.
  Since $\Hcal \subset \Kcal_{L|k}$, we also have $\Ucal_{v|k}^{1} \cap \Hcal = 0$.

  We must now show that a $k$-visible valuation $v$ is trivial on $L$ provided that $\Ucal_{v|k}^{1} \cap \Hcal = 0$.
  \emph{This is the crux of the proof.}
  So, assume that $v$ is $k$-visible and \emph{nontrivial} on $L$.
  Since $L|k(H)$ is algebraic, it follows that $v$ is nontrivial on $k(H)$.

  We observe that $k(H)^{\times} = H$ as subgroups of $K^{\times}$.
  Indeed, first note that $k^{\times} \subset H$ and $H$ is multiplicatively closed.
  Thus, it suffices to show that $H \cup \{0\}$ is additively closed, and since $H$ is a subgroup, for this it suffices to show that $1 + t \in H \cup \{0\}$ whenever $t \in H$.
  If $t \in k^{\times}$, then this is obvious, and if not, then its image in $\Kcal_{K|k}$ is nontrivial, so that $1 + t \in H$ by Lemma~\ref{lemma:milnor_closure_description}, using the assumption that $\Hcal$ is Milnor-closed.

  In any case, we have $k(H)^{\times} = H$.
  Since $\Hcal \neq 0$, hence $k^{\times} \neq k(H)^{\times} = H$ there must exist some element $t \in H \smin k^{\times}$ such that $v(t) > 0$.
  Since $1 + t \in H \smin k^{\times}$ as well, the image of $1 + t$ in $\Kcal_{K|k}$ is a nontrivial element of $\Ucal_{v|k}^{1} \cap \Hcal$, showing that $\Ucal_{v|k}^{1} \cap \Hcal \neq 0$.

  The final assertion is easy.
  If $\Hcal = \Kcal_{L|k}$ then $\Hcal$ is Milnor-closed by Lemma~\ref{lemma:detect_independence}, while the preimage of $\Kcal_{L|k}$ in $K^{\times}$ is $L^{\times}$ since $L$ is relatively algebraically closed in $K$.
  So, the first part of the theorem, which is already proved, gives the desired claim.
\end{proof}

\subsection{Recovering transcendence degree}

We continue with the context above where $k$ is a relatively algebraically closed subfield of $K$.
\begin{lemma}\label{lemma:trdeg_geq_of_exists_subspace}
  Let $d$ be any positive integer.
  Assume that there exists a closed subspace $\Dcal$ of $\Gcal_{K|k}$ such that the following conditions hold true:
  \begin{enumerate}
    \item One has $\Zcal_{K}(\Dcal) \neq \Dcal = \Ccal_{K}(\Zcal_{K}(\Dcal)) \cap \Gcal_{K|k}$.
    \item One has $d \le \dim_{K}(\Zcal_{K}(\Dcal))$.
  \end{enumerate}
  Then $\trdeg(K|k) \geq d$.
\end{lemma}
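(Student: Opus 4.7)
The plan is to translate the abstract conditions on $\Dcal$ into a concrete valuation, identify $\Zcal_K(\Dcal)$ with a dual of the value group, and then invoke Abhyankar's inequality.

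First I would apply Theorem~\ref{theorem:detect_visible_valuations} with $T = k^{\times}$. The two conditions in part (1) of the present lemma are precisely conditions (1) and (2) of that theorem, so there exists a $k$-visible valuation $v$ of $K$ such that
\[ \Dcal = \Dcal_{v} \cap \Gcal_{K|k}, \qquad \Zcal_{K}(\Dcal) = \Ical_{v} \cap \Gcal_{K|k}. \]
This replaces the purely combinatorial hypothesis on $\Dcal$ by honest valuation-theoretic data attached to $v$ over $k$.

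Next, I would compute $\dim_{\Qbb}(\Zcal_{K}(\Dcal))$ in terms of $v$. From the preliminaries, $\Ical_{v} \cap \Gcal_{K|k}$ is the orthogonal of $k^{\times} \cdot \Urm_{v}$ in $\Gcal_{K}$, and the canonical isomorphism $K^{\times}/(k^{\times} \cdot \Urm_{v}) \cong vK/vk$ yields
\[ \Zcal_{K}(\Dcal) = \Hom_{\Zbb}(vK/vk, \Qbb) = \Hom_{\Qbb}\bigl(\Qbb \otimes_{\Zbb} (vK/vk), \Qbb\bigr). \]
Since a $\Qbb$-vector space of finite dimension $< d$ has $\Qbb$-linear dual of the same dimension $< d$, the hypothesis $\dim_{\Qbb}(\Zcal_{K}(\Dcal)) \ge d$ (with $d$ a positive integer) forces
\[ \mathrm{rat.rk}(vK/vk) = \dim_{\Qbb}\bigl(\Qbb \otimes_{\Zbb} (vK/vk)\bigr) \ge d. \]

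Finally, I would conclude by applying the classical Abhyankar inequality to $v$ and its restriction to $k$:
\[ \trdeg(K|k) \;\ge\; \mathrm{rat.rk}(vK/vk) + \trdeg(Kv|kv) \;\ge\; d. \]
I do not anticipate any serious obstacle: all the real content has already been packaged into Theorem~\ref{theorem:detect_visible_valuations}, and the remaining steps amount to unwinding the duality between $\Gcal_{K|k} \cap \Ical_{v}$ and $vK/vk$ and quoting Abhyankar.
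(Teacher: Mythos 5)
Your proof is correct and follows the same strategy as the paper: invoke Theorem~\ref{theorem:detect_visible_valuations} to produce a $k$-visible valuation $v$ with $\Zcal_K(\Dcal) = \Ical_v \cap \Gcal_{K|k}$, identify this with the $\Qbb$-dual of $\Qbb \otimes_\Zbb (vK/vk)$, and conclude via Abhyankar's inequality. The only (inessential) difference is that the paper first restricts to a finitely generated subextension $L = k(t_1,\dots,t_d)$ before quoting Abhyankar, whereas you apply the inequality to $K|k$ directly; both are fine since Abhyankar's inequality for $K|k$ reduces to the finitely generated case anyway.
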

\begin{proof}
  This is a simple consequence of Theorem~\ref{theorem:detect_visible_valuations} in conjunction with Abhyankar's inequality.
  By Theorem~\ref{theorem:detect_visible_valuations}, there exists a $k$-visible valuation $v$ of $K$ such that $\Ical := \Zcal_{K}(\Dcal) = \Ical_{v} \cap \Gcal_{K|k}$.
  Dualizing $\Ical$, we obtain $\Kcal_{K|k}/\Ucal_{v|k} = \Qbb \otimes_{\Zbb} (vK/vk)$, and our assumption tells us that $\dim_{\Qbb}(\Qbb \otimes_{\Zbb}(vK/vk)) \geq d$.
  Choose $t_{1},\ldots,t_{d} \in K$ whose images in $\Qbb \otimes_{\Zbb}(vK/vk)$ are linearly independent, and consider $L := k(t_{1},\ldots,t_{d})$, a finitely-generated subextension of $K|k$, which has the property that
  \[ \dim_{\Qbb}(\Qbb \otimes_{\Zbb} (vL/vk)) \geq d. \]
  By Abhyankar's inequality, we find
  \[ d \le \dim_{\Qbb}(\Qbb \otimes_{\Zbb} (vL/vk)) \le \trdeg(L|k) \le \trdeg(K|k), \]
  as claimed in the lemma.
\end{proof}

\begin{lemma}\label{lemma:exists_subspace_of_trdeg_gt}
  Suppose that $d$ is a positive integer and that $\trdeg(K|k) > d$.
  Then there exists a closed subspace $\Dcal$ of $\Gcal_{K|k}$ such that the following conditions hold true:
  \begin{enumerate}
    \item One has $\Zcal_{K}(\Dcal) \neq \Dcal = \Ccal_{K}(\Zcal_{K}(\Dcal)) \cap \Gcal_{K|k}$.
    \item One has $d \le \dim_{K}(\Zcal_{K}(\Dcal))$.
  \end{enumerate}
\end{lemma}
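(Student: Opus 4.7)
The plan is to construct a $k$-visible valuation $v$ of $K$ whose value group has $\Qbb$-rank at least $d$ modulo $vk$, and then take $\Dcal := \Dcal_{v} \cap \Gcal_{K|k}$. Condition (1) of the lemma will follow from the definition of $k$-visibility together with Lemma~\ref{lemma:centralizer_valuative}, while condition (2) reduces to a rank estimate for $vK/vk$ via the identification $\Zcal_{K}(\Dcal) = \Ical_{v} \cap \Gcal_{K|k}$ and its duality with $\Qbb \otimes_{\Zbb}(vK/vk)$.

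Since $\trdeg(K|k) > d$, I pick elements $t_{1},\ldots,t_{d},u \in K$ that are algebraically independent over $k$ and extend these to a transcendence basis $\Bcal = \{t_{1},\ldots,t_{d},u\} \cup \Bcal_{0}$ of $K|k$. Set $M := k(\Bcal_{0} \cup \{u\})$, and equip the purely transcendental extension $M(t_{1},\ldots,t_{d})$ with the discrete rank $d$ valuation $w$ associated to the regular sequence $(t_{1},\ldots,t_{d})$; this has residue field $M$ and value group $\Zbb^{d}$ (lex) with $\Zbb$-basis $w(t_{1}),\ldots,w(t_{d})$. Since $K | M(t_{1},\ldots,t_{d})$ is algebraic, choose a prolongation $v$ of $w$ to $K$. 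Then $v$ is trivial on $M$ (so in particular $vk = 0$) and $Kv | Mv = M$ is algebraic, so $\trdeg(Kv|kv) \geq 1$ because $u \in M$ is transcendental over $k = kv$. The saturation of $vk = 0$ in $vK$ is trivial and a fortiori contains no nontrivial convex subgroups, so Proposition~\ref{proposition:visible_of_trdeg_ge_one} applied with $T = k^{\times}$ shows that $v$ is $k$-visible.

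Now set $\Dcal := \Dcal_{v} \cap \Gcal_{K|k}$. From the definition of $k$-visibility, $\Zcal_{K}(\Dcal) = \Ical_{v} \cap \Gcal_{K|k} \neq \Dcal$ and $v = v_{\Ical}$ for $\Ical = \Ical_{v} \cap \Gcal_{K|k}$; combining the latter with Lemma~\ref{lemma:centralizer_valuative} yields $\Ccal_{K}(\Zcal_{K}(\Dcal)) = \Dcal_{v}$, so that intersecting with $\Gcal_{K|k}$ gives back $\Dcal$, which verifies condition (1). For condition (2), note that $\Zcal_{K}(\Dcal) = \Gcal_{K|k^{\times} \cdot \Urm_{v}}$ is the full $\Qbb$-linear dual of $\Kcal_{K|k}/\Ucal_{v|k} \cong \Qbb \otimes_{\Zbb}(vK/vk)$, and the elements $v(t_{1}),\ldots,v(t_{d})$ are $\Qbb$-linearly independent in this quotient since $vk = 0$; hence $\dim_{\Qbb} \Zcal_{K}(\Dcal) \geq d$. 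The only subtle point in the construction is the choice of the auxiliary transcendental $u$: without it, the residue extension $Kv|kv$ could be algebraic and Proposition~\ref{proposition:visible_of_trdeg_ge_one} would fail to apply, which is precisely why the strict inequality $\trdeg(K|k) > d$ is required in the hypothesis rather than $\trdeg(K|k) \geq d$.
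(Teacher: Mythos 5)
Your proof is correct and follows essentially the same route as the paper: extend $t_{1},\ldots,t_{d}$ (plus an extra transcendental, forced by the strict inequality) to a transcendence base, take the discrete rank $d$ valuation along the regular sequence $(t_{1},\ldots,t_{d})$, prolong to $K$, invoke Proposition~\ref{proposition:visible_of_trdeg_ge_one} for $k$-visibility, and read off the dimension bound from the rational independence of $v(t_{1}),\ldots,v(t_{d})$ in $\Qbb\otimes_\Zbb vK$. The only cosmetic difference is that you invoke Lemma~\ref{lemma:centralizer_valuative} directly for condition (1), whereas the paper routes the same observation through Theorem~\ref{theorem:detect_visible_valuations}.
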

\begin{proof}
  Let $t_{1},\ldots,t_{d} \in K$ be algebraically independent over $k$, and extend $\{t_{1},\ldots,t_{d}\}$ to a transcendence base of $K|k$ of the form
  \[ \Bcal = \{t_{1},\ldots,t_{d}\} \cup \Bcal_{0}. \]
  By assumption on $\trdeg(K|k)$, the set $\Bcal_{0}$ is nonempty.
  Put $L := k(\Bcal_{0})$ and $M := k(\Bcal) = L(t_{1},\ldots,t_{d})$.
  Consider the valuation associated to the system of regular parameters $(t_{1},\ldots,t_{d})$ on $M$. This is a discrete rank $d$ valuation which is trivial on $L$.
  Extend it in some way to a valuation $v$ on $K$.
  Since $\trdeg(L|k) > 0$, it follows from Proposition~\ref{proposition:visible_of_trdeg_ge_one} that $v$ is visible, while $vk = 0$ and the images of $t_{1},\ldots,t_{d}$ in $\Qbb \otimes_{\Zbb} vK$ are rationally independent.
  Thus $\dim_{\Qbb}(\Ical_{v} \cap \Gcal_{K|k}) \geq d$ as well, while condition (1) follows from Theorem~\ref{theorem:detect_visible_valuations}.
\end{proof}

We will use Lemmas~\ref{lemma:trdeg_geq_of_exists_subspace} and~\ref{lemma:exists_subspace_of_trdeg_gt} primarily to provide a \emph{lower bound} on $\trdeg(K|k)$, which will be required in order to apply Theorem~\ref{theorem:recover_Kcal_geom_lattice}.
We summarize this observation in the following lemmas.
\begin{lemma}\label{lemma:le_trdeg_of_iso_of_lt_trdeg}
  Suppose that $K|k$ and $L|l$ are two relatively algebraically closed field extensions, and that $\Kcal_{*}(K|k) \cong \Kcal_{*}(L|l)$ as algebras.
  If $d$ is a positive integer such that $d < \trdeg(K|k)$, then $d \leq \trdeg(L|l)$.
\end{lemma}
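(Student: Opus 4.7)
The plan is to transport the structure-detecting subspace of Lemma~\ref{lemma:exists_subspace_of_trdeg_gt} across the given algebra isomorphism and then apply Lemma~\ref{lemma:trdeg_geq_of_exists_subspace} on the $L$-side. The whole argument is a transport-of-structure exercise, made possible by the fact that $\Ascr(K|k)$ is functorially determined by $\Kcal_{*}(K|k)$ (Fact~\ref{fact:alternating_iff_milnor_condition}) and that Conditions~(1) and (2) of Lemma~\ref{lemma:trdeg_geq_of_exists_subspace} are expressible purely in the structure $\Ascr(K|k)$.

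First, since $d < \trdeg(K|k)$, Lemma~\ref{lemma:exists_subspace_of_trdeg_gt} produces a closed subspace $\Dcal \subset \Gcal_{K|k}$ with $\Zcal_{K}(\Dcal) \neq \Dcal = \Ccal_{K}(\Zcal_{K}(\Dcal)) \cap \Gcal_{K|k}$ and $\dim_{\Qbb}(\Zcal_{K}(\Dcal)) \geq d$.

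Next, I will upgrade the given isomorphism $\Kcal_{*}(K|k) \cong \Kcal_{*}(L|l)$ of graded $\Qbb$-algebras to an isomorphism of the structures $\Ascr(K|k) \cong \Ascr(L|l)$. Indeed, restricting to degree one yields a $\Qbb$-linear isomorphism $\Kcal_{K|k} \cong \Kcal_{L|l}$; taking the weak $\Qbb$-linear dual produces a topological isomorphism $\Gcal_{L|l} \cong \Gcal_{K|k}$ compatible with the canonical pairings. By Fact~\ref{fact:alternating_iff_milnor_condition}, the relation $\Rcal_{K}$ on $\Gcal_{K|k}$ is characterized by the vanishing locus of $\{-,-\}$ in $\Kcal_{2}(K|k)$, and this locus is preserved by the isomorphism in degree $2$. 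Hence the induced map on duals identifies $\Rcal_{K}$ with $\Rcal_{L}$, giving the desired isomorphism of $\Ascr$-structures.

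Under this isomorphism, the closed subspace $\Dcal$ corresponds to a closed subspace $\Dcal' \subset \Gcal_{L|l}$. The operators $\Zcal_{K}(-)$, $\Ccal_{K}(-)$, the intersection with $\Gcal_{K|k}$, and the dimension of a closed subspace are all defined in terms of the data of $\Ascr(K|k)$ alone, so $\Dcal'$ inherits the analogous properties: $\Zcal_{L}(\Dcal') \neq \Dcal' = \Ccal_{L}(\Zcal_{L}(\Dcal')) \cap \Gcal_{L|l}$ and $\dim_{\Qbb}(\Zcal_{L}(\Dcal')) \geq d$. Applying Lemma~\ref{lemma:trdeg_geq_of_exists_subspace} to $\Dcal'$ on the $L$-side yields $\trdeg(L|l) \geq d$, as required.

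There is no serious obstacle. The only point that requires a little care is the observation that everything in the defining conditions of Lemma~\ref{lemma:trdeg_geq_of_exists_subspace}, including closedness of subspaces and the dimension count, is intrinsic to $\Ascr(K|k)$, and hence is transported by any algebra isomorphism $\Kcal_{*}(K|k) \cong \Kcal_{*}(L|l)$ via Fact~\ref{fact:alternating_iff_milnor_condition}.
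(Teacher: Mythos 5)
Your proof is correct and follows the same route as the paper's: invoke Lemma~\ref{lemma:exists_subspace_of_trdeg_gt} on the $K$-side, transfer the resulting subspace via the isomorphism of structures $\Ascr(K|k) \cong \Ascr(L|l)$ coming from Fact~\ref{fact:alternating_iff_milnor_condition}, and apply Lemma~\ref{lemma:trdeg_geq_of_exists_subspace} on the $L$-side. You simply spell out the transport-of-structure step in more detail than the paper does.
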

\begin{proof}
  The isomorphism $\Kcal_{*}(K|k) \cong \Kcal_{*}(L|l)$ induces an isomorphism of structures $\Ascr(K|k) \cong \Ascr(L|l)$ by Fact~\ref{fact:alternating_iff_milnor_condition} and the surrounding discussion.
  By Lemma~\ref{lemma:exists_subspace_of_trdeg_gt}, there exists a closed subspace $\Dcal$ of $\Gcal_{K|k}$ such that
  \begin{enumerate}
    \item One has $\Zcal_{K}(\Dcal) \neq \Dcal = \Ccal_{K}(\Dcal) \cap \Gcal_{K|k}$.
    \item One has $d \le \dim_{K}(\Zcal_{K}(\Dcal))$.
  \end{enumerate}
  Transferring this subspace across the isomorphism $\Gcal_{K|k} \cong \Gcal_{L|k}$, we obtain a closed subspace of $\Gcal_{L|l}$ satisfying the conditions of Lemma~\ref{lemma:trdeg_geq_of_exists_subspace}, which implies that $d \le \trdeg(L|l)$.
\end{proof}

\begin{lemma}\label{lemma:trdeg_eq_of_iso}
  Suppose that $K|k$ and $L|l$ are two relatively algebraically closed field extensions, and that $\Kcal_{*}(K|k) \cong \Kcal_{*}(L|l)$ as algebras.
  Assume that $\trdeg(K|k) \geq 3$.
  Then $\trdeg(K|k) = \trdeg(L|l)$.
\end{lemma}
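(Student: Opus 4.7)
The plan is to upgrade the hypothesized algebra isomorphism to an isomorphism of geometric lattices $\Gbb(K|k) \cong \Gbb(L|l)$, whereupon the equality of transcendence degrees will follow by reading off Krull dimensions. First I would observe that, by Fact~\ref{fact:alternating_iff_milnor_condition} and the surrounding discussion, the given isomorphism $\Kcal_*(K|k) \cong \Kcal_*(L|l)$ functorially induces an isomorphism of structures $\Ascr(K|k) \cong \Ascr(L|l)$. Moreover, since it is an isomorphism of graded algebras, it respects the relation $\{s,t\} = 0$ in degree two and thus carries Milnor-closed subspaces of $\Kcal_{K|k}$ to Milnor-closed subspaces of $\Kcal_{L|l}$.

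To apply Theorem~\ref{theorem:recover_Kcal_geom_lattice} on both sides one needs $\trdeg(L|l) \geq 2$, which is immediate from Lemma~\ref{lemma:le_trdeg_of_iso_of_lt_trdeg} taken with $d = 2 < 3 \leq \trdeg(K|k)$. With both transcendence degrees at least $2$, Theorem~\ref{theorem:recover_Kcal_geom_lattice} characterizes the geometric subspaces of $\Kcal_{K|k}$ as precisely those Milnor-closed subspaces $\Hcal$ that satisfy $\Hcal = \bigcap_{\Dcal \in \Vscr_{\Hcal}} \Zcal_{K}(\Dcal)^{\perp}$, and similarly on the $L|l$ side. Since $\Vscr_{\Hcal}$, together with the operations $\Zcal_K(-)$, $\Ccal_K(-)$ and $(-)^{\perp}$, is defined using only the pairing and the relation $\Rcal_K$ of the $\Ascr$-structure, the induced isomorphism carries one collection of geometric subspaces to the other, giving an order isomorphism of posets $\Gbb_{\Kcal}(K|k) \cong \Gbb_{\Kcal}(L|l)$.

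Composing with Proposition~\ref{proposition:geometric_lattice_iso} yields an isomorphism of complete lattices $\Gbb(K|k) \cong \Gbb(L|l)$. Since the rank of the algebraic-dependence matroid equals the transcendence degree, the Krull dimension of $\Gbb(K|k)$ is $\trdeg(K|k)$, and likewise for $L|l$; isomorphic lattices share the same Krull dimension, so $\trdeg(K|k) = \trdeg(L|l)$. The step that most deserves care is the second: one must verify that the algebra isomorphism really does preserve the full characterization provided by Theorem~\ref{theorem:recover_Kcal_geom_lattice}, but this is essentially bookkeeping, since every ingredient of that characterization is intrinsic to $\Kcal_*$.
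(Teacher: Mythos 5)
Your proof is correct and follows the same route as the paper: establish $\trdeg(L|l)\geq 2$ so that Theorem~\ref{theorem:recover_Kcal_geom_lattice} applies on both sides, transfer the characterization of geometric subspaces across the induced isomorphism of $\Ascr$-structures, invoke Proposition~\ref{proposition:geometric_lattice_iso} to obtain $\Gbb(K|k)\cong\Gbb(L|l)$, and read off transcendence degrees as Krull dimensions. Your citation of Lemma~\ref{lemma:le_trdeg_of_iso_of_lt_trdeg} for the bound $\trdeg(L|l)\geq 2$ is in fact a touch more precise than the paper's direct appeal to Lemma~\ref{lemma:trdeg_geq_of_exists_subspace}, since the former packages the transfer-of-subspace step, but the argument is the same.
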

\begin{proof}
  By Lemma~\ref{lemma:trdeg_geq_of_exists_subspace}, we see that $\trdeg(L|l) \geq 2$, hence Theorem~\ref{theorem:recover_Kcal_geom_lattice} applies to both $K|k$ and $L|l$.
  This theorem provides us with an isomorphism of lattices
  \[ \Gbb_{\Kcal}(K|k) \cong \Gbb_{\Kcal}(L|l) \]
  hence also $\Gbb(K|k) \cong \Gbb(L|l)$ by Proposition~\ref{proposition:geometric_lattice_iso}.
  Since the transcendence degree of a relatively algebraically closed field extension is the Krull dimension of the corresponding geometric lattice, the claim follows.
\end{proof}

\section{Main results}

In this section, we present and prove the main results of this paper.
We split up this section into three subsections:
\begin{enumerate}
  \item The first regarding \emph{relative results}, dealing with relatively algebraically closed field extensions $K|k$ of sufficiently large transcendence degree, while using $\Kcal_{*}(K|k)$.
  \item The second regarding \emph{absolute results}, dealing with arbitrary fields of sufficiently large \emph{absolute} transcendence degree (i.e.~transcendence degree over the prime subfield), while using $\Kcal_{*}(K)$.
  \item The third dealing with finitely-generated relatively algebraically closed extensions $K|k$ over perfect fields and finitely-generated fields $L$, using $\KM_{*}(K|k)$, $\KM_{*}(L)$.
\end{enumerate}
In cases (1) and (2), our key tool is one of the main results of Evans-Hrushovski and Gismatullin~\cite{zbMATH00007333,zbMATH00839199,zbMATH05350217}, which we have summarized in Theorem~\ref{theorem:Evans-Hrushovski-Gismatullin} above.
In case (3), our key tools will be the reconstruction result due to Cadoret-Pirutka~\cite[Theorem 4]{zbMATH07463742}.

In the first two cases, we will only be able to recover fields \emph{up-to inseparable extensions}, or more precisely, we will be able to recover the perfect closure of the field in question.
Recall that we write $F^{i}$ for its perfect closure of a field $F$.
Of course, if $F$ has characteristic zero, then $F = F^{i}$.
If $F$ has positive characteristic $p$, then one has
\[ F^{i} = \bigcup_{n \geq 0} F^{1/p^{n}}. \]

Note that if $K|k$ is a relatively algebraically closed extension of fields, then $K^{i}|k^{i}$ is also relatively algebraically closed.
The canonical map $K^{\times} \to (K^{i})^{\times}$ induces an \emph{injective} map $K^{\times}/k^{\times} \to (K^{i})^{\times}/(k^{i})^{\times}$, and the induced map of $\Qbb$-modules $\Kcal_{K|k} \to \Kcal_{K^{i}|k^{i}}$ is an \emph{isomorphism}.

We first prove an auxiliary lemma that will be necessary to show certain \emph{uniqueness} properties of the isomorphisms of fields that we obtain.
This lemma has appeared before in~\cite[Theorem 1.1]{zbMATH00839199} and in~\cite[Lemma 13]{zbMATH05635168}, although the proof of~\cite{zbMATH00839199} is more complicated as it relies on the more general result~\cite[Theorem 2.2.2]{zbMATH00007333}, while~\cite{zbMATH05635168} has a blanket assumption that the fields in question have characteristic zero and the base field is algebraically closed.
The argument we give is an adaptation of the proof from~\cite{zbMATH05635168} which avoids the restrictions on the base field.

\begin{lemma}\label{lemma:acl_mul_compat}
  Let $K|k$ be a relatively algebraically closed extension of fields.
  Suppose that $x,y \in K$ are algebraically independent over $k$.
  Let $a,b \in K \smin k$ be two elements such that $\trdeg(k(a,x)|k) = 1$, $\trdeg(k(b,y)|k) = 1$ and $\trdeg(k(a \cdot b, x \cdot y)|k) = 1$.
  Then there exist nonzero integers $m$ and $n$ such that, modulo $k^{\times}$, one has $a^{n} = x^{m}$, $b^{n} = y^{m}$.
\end{lemma}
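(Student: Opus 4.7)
The plan is to recast the three algebraic-dependence hypotheses as geometric statements inside the torus $\Gbb_m^2$, and then to exploit the fact that every one-dimensional closed connected subgroup of $\Gbb_m^2$ is cut out by a single monomial equation.

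First I would note that, since $a, b \in K \smin k$ and $K|k$ is relatively algebraically closed, both $a$ and $b$ are transcendental over $k$. The hypothesis $\trdeg(k(a, x)|k) = 1$ then says that $(x, a) \in (K^\times)^2$ is the generic point, over $\bar k$, of an absolutely irreducible curve $C \subset \Gbb_{m, \bar k}^2$, and similarly $(y, b)$ is the generic point of an absolutely irreducible curve $D \subset \Gbb_{m, \bar k}^2$. Algebraic independence of $x$ and $y$ ensures that $((x,a),(y,b))$ is a generic point of $C \times D$. Consider the multiplication map
\[ \mu \colon C \times D \to \Gbb_{m, \bar k}^2, \quad ((u_1, v_1), (u_2, v_2)) \mapsto (u_1 u_2, v_1 v_2), \]
which sends this generic point to $(xy, ab)$. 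The remaining hypothesis $\trdeg(k(ab, xy)|k) = 1$ is then equivalent to the assertion that the image of $\mu$ has dimension one.

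The crucial step is to deduce that $C$ and $D$ are cosets of a common one-dimensional closed subgroup of $\Gbb_{m, \bar k}^2$. By the fibre-dimension theorem, for generic $g$ in the image, $\mu^{-1}(g)$ is one-dimensional; via projection to the first factor this fibre is identified with $C \cap g \cdot D^{-1}$, and as both are irreducible curves they must coincide, giving $C = g \cdot D^{-1}$. As $g$ ranges over a one-dimensional subset of the image, this forces the stabilizer of $D$ in $\Gbb_{m, \bar k}^2$ to be positive-dimensional, and its identity component $H$ is therefore a one-dimensional closed connected subgroup. Then $D$ is a single $H$-coset (being one-dimensional, irreducible, and $H$-invariant), and so is $C = g \cdot D^{-1}$. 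Every such $H$ is the kernel of a primitive character $\chi(u, v) = u^m v^n$ with $\gcd(m, n) = 1$, so $\chi$ is constant on $C$ and on $D$; evaluating at the generic points yields
\[ x^m a^n = c_1, \quad y^m b^n = c_2, \]
with $c_1, c_2 \in \bar k^\times$. Since the left-hand sides lie in $K$ and $K \cap \bar k = k$ by relative algebraic closure, in fact $c_1, c_2 \in k^\times$.

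Finally, $n = 0$ would force $x^m \in k^\times$, contradicting the transcendence of $x$ over $k$, while $m = 0$ would force $a^n \in k^\times$, making $a$ algebraic over $k$ and hence (by relative algebraic closure) in $k$, contradicting $a \notin k$. Replacing $(m, n)$ by $(-m, n)$ rewrites the identities as $a^n \equiv x^m$ and $b^n \equiv y^m$ modulo $k^\times$, which is the desired conclusion. I expect the main obstacle to be making the stabilizer argument fully rigorous, namely the passage from ``$\mu$ has one-dimensional image'' to ``$C$ and $D$ are cosets of a common one-dimensional subgroup of $\Gbb_{m, \bar k}^2$''; this is a standard, but not entirely routine, fact about subvarieties of commutative algebraic groups.
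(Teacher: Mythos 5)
Your argument is correct, and it takes a genuinely different route from the paper's. The paper's proof is Galois-theoretic: after embedding $K^\times/k^\times$ into $\bar K^\times/\bar k^\times$ so as to assume $k$ and $K$ algebraically closed, it identifies $\Gal\bigl(\bar{k(x)}\cdot\bar{k(y)}\,\big|\,k(x,y)^{i}\bigr)$ with the product $\Gal\bigl(\bar{k(x)}|k(x)^{i}\bigr)\times\Gal\bigl(\bar{k(y)}|k(y)^{i}\bigr)$, shows that each factor acts on $a$, $b$, $ab$ through roots of unity, passes to a power to land in $k(x)$, $k(y)$, $k(xy)$ (clearing both the root-of-unity and the $p$-power denominators), and concludes by comparing irreducible factorizations in $k[x,y]$. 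You instead recast the three dependence hypotheses geometrically inside $\Gbb_m^2$: $(x,a)$ and $(y,b)$ are $\bar k$-generic on irreducible curves $C$, $D$, the hypothesis on $ab$, $xy$ says the multiplication image $C\cdot D$ remains one-dimensional, and the fibre-dimension plus stabilizer argument exhibits $C$ and $D$ as cosets of a common one-dimensional subtorus, whose defining primitive character $u^m v^n$ yields the desired monomial relations. Both proofs are sound and are, at bottom, tracking the same one-dimensional group acting on the curves, but yours is conceptually tighter: the structure theory of connected closed subgroups of a split torus works uniformly in all characteristics, so the fiddling with roots of unity and the replacement $n\mapsto n p^{j}$ that the paper needs to pass from $k(x)^{i}$ to $k(x)$ are absorbed into one clean statement. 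The step you flagged as delicate is in fact fine as written: for generic $g$ in the one-dimensional image, $C\cap gD^{-1}$ is a one-dimensional closed subset of the irreducible curve $C$, hence equals $C$, hence equals $gD^{-1}$; letting $g$ range over a dense open of the image then produces a positive-dimensional family inside the stabilizer of $D$, whose identity component is the required subtorus.
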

\begin{proof}
  Embed $K^{\times}/k^{\times}$ into $\bar K^{\times}/\bar k^{\times}$ to assume without loss of generality that both $k$ and $K$ are algebraically closed.
  The elements $a$, $b$ and $a \cdot b$ are all contained in the compositum $\bar{k(x)} \cdot \bar{k(y)} =: M \subset K$.
  Since $x$ and $y$ are algebraically independent over $k$, we may identify
  \[ \Gal(M|k(x,y)^{i}) = \Gal(\bar{k(x)}|k(x)^{i}) \times \Gal(\bar{k(y)}|k(y)^{i}), \]
  where the two projections are the usual restriction maps.
  This lets us identify $\Gal(\bar{k(x)}|k(x)^{i})$ with the subgroup of $\Gal(M|k(x,y)^{i})$ which fixes $\bar{k(y)}$ pointwise, and similarly with $x$ and $y$ interchanged.

  With this identification, any $\sigma \in \Gal(\bar{k(x)}|k(x)^{i})$ acts on $a \cdot b$ as $\sigma(a) \cdot b$, and thus $\sigma(a)/a = \sigma(a \cdot b)/(a \cdot b) =: t_{\sigma}$.
  As $x$ and $x \cdot y$ are algebraically independent, we have
  \[ t_{\sigma} \in \bar{k(x)} \cap \bar{k(x \cdot y)} = k. \]
  But $\sigma(a) = t_{\sigma} \cdot a$ and $k(x)^{i}(a)$ is a finite extension of $k(x)^{i}$, hence $t_{\sigma}$ must be a root of unity.
  By symmetry, for any $\tau \in \Gal(\bar{k(y)}|k(y))$, we also have $\tau(b) = s_{\tau} \cdot b$ for some root of unity $s_{\tau}$.

  It follows that for all $\gamma \in \Gal(M|k(x,y)^{i})$, the elements $\gamma(a)/a$ and $\gamma(b)/b$ are both roots of unity.
  The action of $\Gal(M|k(x,y)^{i})$ on $k(x,y)^{i}(a,b)$ factors through a finite quotient, and thus we deduce that there exists some positive integer $n_{1}$ such that for all $\gamma \in \Gal(M|k(x,y)^{i})$, one has $\gamma(a^{n_{1}}) = a^{n_{1}}$ and $\gamma(b^{n_{1}}) = b^{n_{1}}$.
  In other words, $\Gal(M|k(x,y)^{i})$ acts trivially on $a^{n_{1}}$ and $b^{n_{1}}$, which implies that $a^{n_{1}} \in k(x)^{i}$, $b^{n_{1}} \in k(y)^{i}$.
  Arguing similarly with $x \cdot y$ in place of $y$ and $x^{-1}$ in place of $x$, we see that there exists an integer $n_{2}$ such that $a^{n_{2}} \in k(x)^{i}$ and $(a \cdot b)^{n_{2}} \in k(x \cdot y)^{i}$.
  Taking $n = n_{1} \cdot n_{2}$, it follows that $a^{n} \in k(x)^{i}$, $b^{n} \in k(y)^{i}$ and $(a \cdot b)^{n} \in k(x \cdot y)^{i}$.
  Further replacing $n$ by an integer of the form $n \cdot p^{k}$, where $p$ is the characteristic exponent of $k$, we may assume that $a^{n} \in k(x)$, $b^{n} \in k(y)$ and $(a \cdot b)^{n} \in k(x \cdot y)$.

  In particular, we may write
  \[ a^{n} = \prod_{c \in k} (x-c)^{m_{c}}, \ b^{n} = \prod_{c \in k} (y-c)^{n_{c}}, \]
  modulo constants, where all but finitely many of the $m_{c}, n_{c} \in \Zbb$ are zero.
  In particular, we must also have
  \[ (a \cdot b)^{n} = \prod_{c \in k} (x-c)^{m_{c}} \cdot (y-c)^{n_{c}}, \]
  modulo constants.
  But this element is contained in $k(x \cdot y)$, so the only irreducible polynomials from $k[x,y]$ that may appear in the factorization of $(a \cdot b)^{n}$ are of the form $(x \cdot y - c)$ for some $c \in k$.
  Combining these observations, we deduce that $m_{c} = n_{c} = 0$ for all $c \in k^{\times}$, and that $m_{0} = n_{0}$.
  In other words, there exists an integer $m$ such that $a^{n} = x^{m}$ and $b^{n} = y^{m}$ modulo constants, as required.
\end{proof}

Recall that $\Kcal_{K|k}$ is written \emph{additively}.
It will be useful to reinterpret the above lemma in the context of $\Kcal_{K|k}$.
First of all, if $S$ is \emph{any subset} of $\Kcal_{K|k}$, we write $\acl_{K|k}(S)$ for the \emph{smallest} element of $\Gbb_{\Kcal}(K|k)$ which contains $S$.
Since $\Gbb_{\Kcal}(K|k)$ is a complete lattice, this $\acl_{K|k}(S)$ is well-defined.
We say that two elements $x,y \in \Kcal_{K|k}$ are \emph{dependent} provided that there exists a geometric subspace of the form $\Kcal_{L|k}$ where $L \in \Gbb(K|k)$ with $\trdeg(L|k) \le 1$ such that $x,y \in \Kcal_{L|k}$.
If $x,y$ are \emph{not dependent} then we shall say that they are \emph{independent} and in this case $\acl_{K|k}(x,y) = \Kcal_{L|k}$ for some $L \in \Gbb(K|k)$ with $\trdeg(L|k) = 2$.
If $x,y \in K^{\times}$ with images $\bar x, \bar y$ in $\Kcal_{K|k}$, then $\bar x,\bar y$ are (in)dependent in the above sense if and only if $x,y$ are algebraically (in)dependent over $k$.

\begin{lemma}\label{lemma:acl_mul_compat_Kcal}
  Let $x,y \in \Kcal_{K|k}$ be two independent elements.
  Let $a,b \in \Kcal_{K|k}$ be two elements such that $a \in \acl_{K|k}(x)$, $b \in \acl_{K|k}(y)$ and $a + b \in \acl_{K|k}(x+y)$.
  Then there exists some $r \in \Qbb^{\times}$ such that $a = r \cdot x$ and $b = r \cdot y$.
\end{lemma}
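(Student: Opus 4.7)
The plan is to lift $x,y,a,b$ to representatives in $K^{\times}$ and invoke Lemma~\ref{lemma:acl_mul_compat}, which is the genuine field-theoretic content. The lemma at hand is essentially its reformulation in additive notation after tensoring with $\Qbb$.

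First I would dispose of the degenerate case $a = 0$ or $b = 0$. Since $x,y$ are independent in $\Kcal_{K|k}$, any lifts $\tilde x, \tilde y \in K^{\times}$ are algebraically independent over $k$; then $\tilde y$ and $\tilde x \tilde y$ are also algebraically independent, as $k(\tilde y, \tilde x \tilde y) = k(\tilde x, \tilde y)$. Hence $y$ and $x+y$ are independent in $\Kcal_{K|k}$, so $\acl_{K|k}(y) \cap \acl_{K|k}(x+y) = 0$. If $a = 0$, then $b = a+b$ lies in this intersection, forcing $b = 0$; symmetrically, $b = 0$ forces $a = 0$. Hence I may assume both $a$ and $b$ are nonzero.

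Next, choose a positive integer $N$ such that $Nx, Ny, Na, Nb$ all lie in the image of $K^{\times}/k^{\times} \to \Kcal_{K|k}$, and select lifts $\tilde x, \tilde y, \tilde a, \tilde b \in K^{\times}$ representing $Nx, Ny, Na, Nb$ respectively. Then $\tilde x \cdot \tilde y$ represents $N(x+y)$ and $\tilde a \cdot \tilde b$ represents $N(a+b)$. Via Proposition~\ref{proposition:geometric_lattice_iso} and the scale-invariance of geometric subspaces under $\Qbb^{\times}$, the hypothesis $a \in \acl_{K|k}(x)$ unpacks to the statement that $\tilde a$ lies in the relative algebraic closure of $k(\tilde x)$ in $K$; similarly for $\tilde b$ over $k(\tilde y)$, and for $\tilde a \tilde b$ over $k(\tilde x \tilde y)$. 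Since $a, b \neq 0$, the lifts $\tilde a, \tilde b$ lie in $K \smin k$, and since $\tilde x \tilde y$ is transcendental over $k$ (being the product of two algebraically independent transcendentals), we obtain $\trdeg(k(\tilde a, \tilde x)|k) = \trdeg(k(\tilde b, \tilde y)|k) = \trdeg(k(\tilde a \tilde b, \tilde x \tilde y)|k) = 1$. Thus all hypotheses of Lemma~\ref{lemma:acl_mul_compat} are satisfied.

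Applying Lemma~\ref{lemma:acl_mul_compat} yields nonzero integers $m,n$ such that $\tilde a^{n} = \tilde x^{m}$ and $\tilde b^{n} = \tilde y^{m}$ modulo $k^{\times}$. Translating back into $\Kcal_{K|k}$ gives $n \cdot Na = m \cdot Nx$ and $n \cdot Nb = m \cdot Ny$, whence $a = (m/n) \cdot x$ and $b = (m/n) \cdot y$; setting $r := m/n \in \Qbb^{\times}$ concludes the proof. The main obstacle is bookkeeping: one must track the common scaling factor $N$ and check that additive relations in $\Kcal_{K|k}$ correspond correctly to multiplicative relations among the chosen lifts in $K^{\times}/k^{\times}$. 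The substantive mathematical content is already contained in Lemma~\ref{lemma:acl_mul_compat}.
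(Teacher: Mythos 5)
Your proof is correct but differs in execution from the paper's. The paper's one-line argument embeds $\Kcal_{K|k}$ into $\Kcal_{\bar K|\bar k}$ and uses that the canonical map $\bar K^{\times}/\bar k^{\times} \to \Kcal_{\bar K|\bar k}$ is already an \emph{isomorphism} (since $\bar K^{\times}$ is divisible and $\bar K^{\times}/\bar k^{\times}$ is torsion-free); Lemma~\ref{lemma:acl_mul_compat} then applies verbatim to $\bar K|\bar k$ with no lifting or denominators. Your version stays inside $K|k$, chooses a common denominator $N$ so that $Nx,Ny,Na,Nb$ lift to $K^{\times}$, and then applies Lemma~\ref{lemma:acl_mul_compat} directly; the one spot that needs a word is why $\tilde a$ lands in the relative algebraic closure $L$ of $k(\tilde x)$, which holds because the preimage of the geometric subspace $\Kcal_{L|k}$ in $K^{\times}$ is exactly $L^{\times}$ ($L$ being relatively algebraically closed in $K$, the subgroup $L^{\times}/k^{\times}$ is saturated in $K^{\times}/k^{\times}$). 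The tradeoff is transparency versus brevity: yours makes the scaling bookkeeping explicit; the paper's eliminates it by passing to the algebraic closure, at the minor cost of checking that (in)dependence transfers across the embedding $\Kcal_{K|k} \hookrightarrow \Kcal_{\bar K|\bar k}$. One small remark: your degenerate-case analysis correctly shows $a=0$ iff $b=0$, but when both vanish the stated conclusion actually fails (since $x\neq 0$, no $r \in \Qbb^{\times}$ gives $r\cdot x = 0$); both your proof and the paper's tacitly restrict to $a,b\neq 0$, which is harmless in the lemma's sole application inside Theorem~\ref{theorem:main_relative_Kcal}, where $a$ and $b$ are images of nonzero elements under isomorphisms.
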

\begin{proof}
  The map $\Kcal_{K|k} \to \Kcal_{\bar K|\bar k}$ is injective and compatible with the notion of (in)dependence introduced above.
  Also, the map
  \[ \KM_{1}(\bar K|\bar k) \to \Kcal_{\bar K|\bar k} \]
  is an \emph{isomorphism} since $\bar K^{\times}$ is divisible.
  By mapping into $\Kcal_{\bar K|\bar k}$ and identifying this with $\KM_{1}(\bar K|\bar k)$, the assertion of this lemma reduces to that of Lemma~\ref{lemma:acl_mul_compat}.
\end{proof}

\subsection{The relative case}

\begin{theorem}\label{theorem:main_relative_Kcal}
  Let $K|k$ be a relatively algebraically closed extension of fields satisfying $\trdeg(K|k) \geq 5$.
  Then $K^{i}|k^{i}$ are determined up-to isomorphism by the algebra $\Kcal_{*}(K|k)$.
  More precisely, if $L|l$ is another relatively algebraically closed extension of fields and
  \[ \phi_{*} : \Kcal_{*}(K|k) \xrightarrow{\cong} \Kcal_{*}(L|l) \]
  is an isomorphism of $\Qbb$-algebras, then there exists some $r \in \Qbb^{\times}$ and an isomorphism of fields
  \[ \Phi : K^{i} \xrightarrow{\cong} L^{i} \]
  such that $\Phi(k^{i}) = l^{i}$ and the following diagram commutes:
  \[ \begin{tikzcd}
      \Kcal_{K^{i}|k^{i}} \ar[r,"\Phi"] & \Kcal_{L^{i}|l^{i}} \\
      \Kcal_{K|k} \ar[u] \ar[r,"r \cdot \phi_{1}"'] & \Kcal_{L|l}. \ar[u]
    \end{tikzcd}\]
  Here, the vertical maps are those induced by the inclusions $K \subset K^{i}$ and $L \subset L^{i}$, and the map labeled $\Phi$ is induced by the isomorphism $\Phi : K^{i} \cong L^{i}$ of fields.
  Finally, $\Phi$ is unique with these properties up-to composition with some power of the $p$-power Frobenius $x \mapsto x^{p}$, where $p$ is the characteristic exponent of $K$.
\end{theorem}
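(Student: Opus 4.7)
The plan is to combine the algebraic-dependence machinery of Section~\ref{section:algebraic_dependence} with the Evans-Hrushovski-Gismatullin reconstruction theorem (Theorem~\ref{theorem:Evans-Hrushovski-Gismatullin}), and then to adjust the resulting field isomorphism by a scalar using Lemma~\ref{lemma:acl_mul_compat_Kcal}. First I would transfer all the relevant structure across $\phi_{*}$. By Fact~\ref{fact:alternating_iff_milnor_condition} and the discussion around it, $\phi_{*}$ induces an isomorphism of structures $\Ascr(K|k) \cong \Ascr(L|l)$; in particular the degree-one component $\phi_{1}$ is a $\Qbb$-linear isomorphism $\Kcal_{K|k} \cong \Kcal_{L|l}$ compatible with the canonical pairings and the relation $\Rcal$. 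Lemma~\ref{lemma:trdeg_eq_of_iso} gives $\trdeg(L|l) = \trdeg(K|k) \geq 5$, and Theorem~\ref{theorem:recover_Kcal_geom_lattice} provides an intrinsic description of the geometric subspaces $\Gbb_{\Kcal}(K|k) \subset \Kcal_{K|k}$ purely in terms of $\Ascr(K|k)$. Consequently $\phi_{1}$ sends geometric subspaces to geometric subspaces, and together with Proposition~\ref{proposition:geometric_lattice_iso} this yields a lattice isomorphism $\bar\phi : \Gbb(K|k) \cong \Gbb(L|l)$.

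Next I would feed $\bar\phi$ into Theorem~\ref{theorem:Evans-Hrushovski-Gismatullin}, which applies because $\trdeg(K|k) \geq 5$. This produces a field isomorphism $\Phi : K^{i} \cong L^{i}$ with $\Phi(k^{i}) = l^{i}$ realizing $\bar\phi$ in the sense that $\bar\phi(M) = \Phi(M^{i}) \cap L$ for all $M \in \Gbb(K|k)$, and unique up to composition with a power of the $p$-power Frobenius. Via the canonical isomorphisms $\Kcal_{K|k} \cong \Kcal_{K^{i}|k^{i}}$ and $\Kcal_{L|l} \cong \Kcal_{L^{i}|l^{i}}$, the field isomorphism $\Phi$ induces a $\Qbb$-linear isomorphism $\Psi : \Kcal_{K|k} \cong \Kcal_{L|l}$ which realizes the very same lattice map $\bar\phi$ as $\phi_{1}$. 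The uniqueness claim in the theorem then reduces, once a compatible $\Phi$ has been constructed, to the uniqueness statement already built into Theorem~\ref{theorem:Evans-Hrushovski-Gismatullin}.

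The principal obstacle is the final commutativity of the diagram, i.e.~showing that $\Psi = r \cdot \phi_{1}$ for some $r \in \Qbb^{\times}$. Consider $\gamma := \Psi^{-1} \circ \phi_{1}$, a $\Qbb$-linear automorphism of $\Kcal_{K|k}$. Since $\Psi$ and $\phi_{1}$ induce the same bijection of geometric subspaces, $\gamma$ preserves every geometric subspace of $\Kcal_{K|k}$, so $\gamma(x) \in \acl_{K|k}(x)$ for every $x$. Given any nonzero $z \in \Kcal_{K|k}$, pick $w \in \Kcal_{K|k}$ independent from $z$, which is possible since $\trdeg(K|k) \geq 2$. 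Then $\gamma(z) \in \acl_{K|k}(z)$, $\gamma(w) \in \acl_{K|k}(w)$, and by $\Qbb$-linearity $\gamma(z) + \gamma(w) = \gamma(z+w) \in \acl_{K|k}(z+w)$. Lemma~\ref{lemma:acl_mul_compat_Kcal} then furnishes a scalar $r_{z,w} \in \Qbb^{\times}$ with $\gamma(z) = r_{z,w} \cdot z$ and $\gamma(w) = r_{z,w} \cdot w$. A standard pivoting argument, valid because $\trdeg(K|k) \geq 5$ guarantees the existence of a common $w$ independent from any two prescribed nonzero elements, shows that $r_{z,w}$ is independent of the choice of $z$ and $w$. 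Thus $\gamma$ is scalar multiplication by some fixed $r \in \Qbb^{\times}$, which is exactly the asserted commutativity after renaming $r \mapsto r^{-1}$.
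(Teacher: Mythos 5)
Your proposal is correct and follows essentially the same route as the paper's proof: transfer $\Ascr$-structure via Fact~\ref{fact:alternating_iff_milnor_condition}, pin down transcendence degree with Lemma~\ref{lemma:trdeg_eq_of_iso}, recover the geometric lattice via Theorem~\ref{theorem:recover_Kcal_geom_lattice} and Proposition~\ref{proposition:geometric_lattice_iso}, apply Theorem~\ref{theorem:Evans-Hrushovski-Gismatullin} to produce $\Phi$, and then use Lemma~\ref{lemma:acl_mul_compat_Kcal} with a pivoting argument to extract the scalar $r$. The only cosmetic difference is that you work with the automorphism $\gamma = \Psi^{-1}\circ\phi_1$ of $\Kcal_{K|k}$ and a single common pivot, whereas the paper compares $\phi$ and $\phi'$ directly on the target side and handles the dependent/independent cases separately; both are the same argument.
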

\begin{proof}
  Let $\phi_{*}$ be as in the statement of the theorem and put $\phi = \phi_{1}$.
  By Lemma~\ref{lemma:trdeg_eq_of_iso}, we see that $\trdeg(L|l) = \trdeg(K|k) \geq 5$.
  By Theorem~\ref{theorem:recover_Kcal_geom_lattice} and the discussion around Fact~\ref{fact:alternating_iff_milnor_condition}, we see that the map $\Hcal \mapsto \phi \Hcal$ is an \emph{isomorphism} of lattices $\Gbb_{\Kcal}(K|k) \cong \Gbb_{\Kcal}(L|l)$.
  By Proposition~\ref{proposition:geometric_lattice_iso}, we obtain an isomorphism $\psi : \Gbb(K|k) \cong \Gbb(L|l)$ of geometric lattices satisfying
  \[ \phi \Kcal_{M|k} = \Kcal_{\psi M|l} \]
  for all $M \in \Gbb(K|k)$.
  Thus, by~\cite[theorem 4.2]{zbMATH05350217} (see Theorem~\ref{theorem:Evans-Hrushovski-Gismatullin}), there exists an isomorphism $\Phi : K^{i} \cong L^{i}$ of fields satisfying the following conditions:
  \begin{enumerate}
    \item $\Phi(k^{i}) = l^{i}$.
    \item One has $\psi(M) = \Phi(M^{i}) \cap L$ for all $M \in \Gbb(K|k)$.
  \end{enumerate}
  And furthermore, this $\Phi$ is unique up-to composition with some power of the $p$-power Frobenius $x \mapsto x^{p}$, where $p$ is the characteristic exponent of $K$.

  To conclude, we must show that there exists some $r \in \Qbb^{\times}$ making the diagram from the statement of the theorem commute.
  Let $\phi' : \Kcal_{K|k} \cong \Kcal_{L|l}$ be the isomorphism induced by $\Phi$, i.e.~$\phi'$ is the composition
  \[ \Kcal_{K|k} \xrightarrow{\cong} \Kcal_{K^{i}|k^{i}} \xrightarrow{\Phi} \Kcal_{L^{i}|l^{i}} \xleftarrow{\cong} \Kcal_{L|l}. \]
  We must show that there exists some $r \in \Qbb^{\times}$ such that $r \cdot \phi = \phi'$.

  Let $x,y \in \Kcal_{K|k}$ be two independent elements.
  The pair $\phi(x)$, $\phi(y)$ is also independent, while the following pairs are all \emph{dependent}:
  \[ \phi(x),\phi'(x); \ \phi(y),\phi'(y); \ \phi(x) + \phi(y),\phi'(x)+\phi'(y). \]
  By Lemma~\ref{lemma:acl_mul_compat_Kcal}, there exists some $r(x,y) \in \Qbb^{\times}$ such that
  \[ r(x,y) \cdot \phi(x) = \phi'(x), \ r(x,y) \cdot \phi(y) = \phi'(y). \]
  Since $\phi(x)$ and $\phi(y)$ are also \emph{linearly independent} over $\Qbb$, it follows that $r(x,y) = r(y,x)$.

  We claim that $r(x,y)$ does not depend on the choice of $x,y$.
  First, if $z$ is another element which is independent from $x$, then $r(x,y) = r(x,z)$ since $r(x,y) \cdot \phi(x) = r(x,z) \cdot \phi(x)$.
  Now, if $x',y'$ is any other pair of independent elements, we show that $r(x,y) = r(x',y')$ as follows:
  \begin{enumerate}
    \item If $x,x'$ are \emph{dependent}, then $x',y$ are independent, hence
          \[ r(x,y) = r(y,x) = r(y,x') = r(x',y) = r(x',y'). \]
    \item If $x,x'$ are \emph{independent}, then
          \[ r(x,y) = r(x,x') = r(x',x) = r(x',y'). \]
  \end{enumerate}
  In any case, $r(x,y)$ doesn't depend on the choice of $x,y$, and we put $r := r(x,y)$ for some choice of an independent pair $x,y$.

  Now, if $x$ is any nontrivial element of $\Kcal_{K|k}$, we may find some $y \in \Kcal_{K|k}$ which is independent from $x$, and observe that
  \[ r \cdot \phi(x) = r(x,y) \cdot \phi(x) = \phi'(x). \]
  Of course, this equality holds with $x = 0$ as well, so we deduce that indeed $r \cdot \phi = \phi'$.
  This concludes the proof of the theorem.
\end{proof}

\subsection{The absolute case}

\begin{theorem}\label{theorem:main_absolute_Kcal}
  Let $K$ be a field whose absolute transcendence degree is at least five.
  Then $K^{i}$ is determined up-to isomorphism by the algebra $\Kcal_{*}(K)$.
  More precisely, suppose that $L$ is another field and
  \[ \phi_{*} : \Kcal_{*}(K) \xrightarrow{\cong} \Kcal_{*}(L) \]
  is an isomorphism of $\Qbb$-algebras.
  Let $k$ be the relative algebraic closure of the prime subfield of $K$ and $l$ the relative algebraic closure of the prime subfield of $L$.
  Then $\phi_{*}$ induces an isomorphism $\bar\phi_{*} : \Kcal_{*}(K|k) \cong \Kcal_{*}(L|l)$.
  Furthermore, there exists some $r \in \Qbb^{\times}$ and an isomorphism of fields
  \[ \Phi : K^{i} \xrightarrow{\cong} L^{i} \]
  which necessarily satisfies $\Phi(k^{i}) = l^{i}$, such that the following diagram commutes:
  \[ \begin{tikzcd}
      \Kcal_{K^{i}|k^{i}} \ar[r,"\Phi"] & \Kcal_{L^{i}|l^{i}} \\
      \Kcal_{K|k} \ar[u] \ar[r,"r \cdot \bar\phi_{1}"'] & \Kcal_{L|l}. \ar[u]
    \end{tikzcd}\]
  Here, the vertical maps are those induced by the inclusions $K \subset K^{i}$ and $L \subset L^{i}$, and the map labeled $\Phi$ is induced by the isomorphism $\Phi : K^{i} \cong L^{i}$ of fields.
  Finally, $\Phi$ is unique with these properties up-to composition with some power of the $p$-power Frobenius $x \mapsto x^{p}$, where $p$ is the characteristic exponent of $K$.
\end{theorem}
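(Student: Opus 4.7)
The plan is to reduce this absolute result to the relative case (Theorem~\ref{theorem:main_relative_Kcal}). Every piece of the theorem's conclusion --- the field isomorphism $\Phi$, the scalar $r$, the commuting diagram, the identification $\Phi(k^{i}) = l^{i}$, and the uniqueness up to $p$-power Frobenius --- will follow from Theorem~\ref{theorem:main_relative_Kcal} applied to $\bar\phi_{*}$, once $\bar\phi_{*}$ has been constructed. The transcendence-degree hypothesis $\trdeg(K|k) \geq 5$ required by the relative theorem is immediate: since $k$ is algebraic over the prime subfield of $K$, the value $\trdeg(K|k)$ equals the absolute transcendence degree of $K$, which is at least $5$ by assumption.

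The core of the argument is the construction of $\bar\phi_{*}$. For this, I must show that $\phi_{1}$ sends the subspace $k^{\times} \otimes \Qbb \subset \Kcal_{K}$ onto $l^{\times} \otimes \Qbb \subset \Kcal_{L}$; equivalently, each of these subspaces must be characterized intrinsically from the $\Qbb$-algebra structure of $\Kcal_{*}(K)$, or equivalently from the structure $\Ascr(K)$ (by Fact~\ref{fact:alternating_iff_milnor_condition} and the surrounding discussion). In positive characteristic this is automatic: $k$ is algebraic over $\mathbb{F}_{p}$, so $k^{\times}$ is torsion, $k^{\times} \otimes \Qbb = 0$, and $\Kcal_{*}(K|k) = \Kcal_{*}(K)$ canonically; the descent of $\phi_{*}$ is trivial. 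The substance of the argument is in characteristic zero.

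In characteristic zero I expect to identify $k^{\times} \otimes \Qbb$ as the minimum element of a lattice of \emph{absolute geometric subspaces} of $\Kcal_{K}$ --- subspaces of the form $M^{\times} \otimes \Qbb$ for $M$ ranging over $\Gbb(K|k)$. This parallels the identification of $\Gbb_{\Kcal}(K|k)$ carried out in Theorem~\ref{theorem:recover_Kcal_geom_lattice} and Proposition~\ref{proposition:geometric_lattice_iso}, but must be performed in $\Kcal_{K}$ itself, without a priori access to $k$. The tools are the visible valuations of Theorem~\ref{theorem:detect_visible_valuations} applied with $T = \{1\}$, together with an intersection-of-$\Ucal_{v}$ characterization analogous to Theorem~\ref{theorem:recover_Kcal_geom_lattice}. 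Once established, this lattice is preserved by $\phi_{1}$, its minimum is preserved, and $\phi_{1}(k^{\times} \otimes \Qbb) = l^{\times} \otimes \Qbb$ follows, yielding the descent to $\bar\phi_{*}$.

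The main obstacle lies in the characteristic-zero characterization of $k^{\times} \otimes \Qbb$. A naive formula of the form $k^{\times} \otimes \Qbb = \bigcap_{v} \Ucal_{v}$ over all $\{1\}$-visible valuations $v$ fails: $\{1\}$-visible valuations include $p$-adic extensions of $\Qbb$, which are nontrivial on $\Qbb^{\times} \subset k^{\times}$ and thus exclude elements of $k^{\times} \otimes \Qbb$ from the intersection. The successful characterization will need to restrict to the subclass of visible valuations trivial on $k$, distinguished from the arithmetic ones by an intrinsic criterion on the value groups (for example, via a saturation condition in the spirit of Lemma~\ref{lemma:valuative_iff_saturation}). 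Once this is in hand, the remainder of the argument is lattice-theoretic bookkeeping together with a direct invocation of Theorem~\ref{theorem:main_relative_Kcal}.
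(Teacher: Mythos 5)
Your overall strategy matches the paper's: construct $\bar\phi_{*}$ by characterizing $\ker(\Kcal_{K} \to \Kcal_{K|k})$ intrinsically from $\Ascr(K)$, then invoke Theorem~\ref{theorem:main_relative_Kcal}. You also correctly identify the essential obstacle in characteristic zero, namely that $\{1\}$-visible valuations include $p$-adic ones, so a bare intersection $\bigcap_v \Ucal_v$ over visible valuations undershoots $\Kcal_k$.

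However, your proposed mechanism for excluding the arithmetic valuations --- an intrinsic criterion on value groups, in the spirit of Lemma~\ref{lemma:valuative_iff_saturation} --- does not work, and this is the gap. The value group of a $p$-adic prolongation to $K$ is not structurally distinguishable from that of a geometric (residually transcendental) valuation; both can be, say, $\Zbb$, and Lemma~\ref{lemma:valuative_iff_saturation} concerns minimality of $v$ relative to a subspace, not arithmeticity. The paper's Proposition~\ref{proposition:prime_subfield_characterization} instead excludes the arithmetic valuations by an \emph{additive} device: for each nonzero $t \in \Kcal_{K}$ it forms the Milnor closure $\Hcal_{t}$ of the singleton $\{t\}$ and imposes the extra condition $\Hcal_{t} \cap \Dcal^{\perp} = 0$ on the subspaces $\Dcal$. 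The key computation is that (in characteristic zero, and after replacing $t$ by a suitable power, chosen via a complex embedding so that $1+t$ and $(2+t)/t$ are not roots of unity) one has $2 \in H_{t}$, hence $\Nbb \subset H_{t}$, by iterated use of Lemma~\ref{lemma:milnor_closure_description}. Then if $v$ restricted to $\Qbb$ were $p$-adic, the class of $1+p$ would be a nontrivial element of $\Hcal_{t} \cap \Ucal_{v}^{1} = \Hcal_{t} \cap \Dcal_{v}^{\perp}$, a contradiction. This device is genuinely different from anything about value groups, and it is the technical heart of the absolute result; your proposal does not supply it.

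A secondary gap: before applying the characterization of $\Kcal_{l}$ inside $\Kcal_{L}$, one must know $\trdeg(L|l) \geq 2$, and you cannot yet cite Lemma~\ref{lemma:trdeg_eq_of_iso} because that lemma is formulated for the relative algebras $\Kcal_{*}(K|k)$, which at this stage are not yet known to match under $\phi_{*}$. The paper handles this by transferring a subspace $\Dcal$ produced by Lemma~\ref{lemma:exists_subspace_of_trdeg_gt} across $\Gcal_{K} \cong \Gcal_{L}$, using Proposition~\ref{proposition:centre_valuative} to get a valuation $v$ on $L$ with $\dim_{\Qbb}(\Qbb \otimes vL) \geq 4$, and then concluding $\trdeg(L|l) \geq 3$ from Abhyankar's inequality after quotienting by $vl$ (which has rational rank at most $1$ since $l$ is algebraic over a prime field). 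This bootstrapping step is absent from your sketch.
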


By using Theorem~\ref{theorem:main_relative_Kcal}, in order to prove Theorem~\ref{theorem:main_absolute_Kcal} it suffices to give a characterization of the kernel of $\Kcal_{K} \to \Kcal_{K|k}$ using $\Kcal_{*}(K)$, where $k$ is the relative algebraic closure of the prime subfield.
If this kernel is denoted by $\Delta$, then one has
\[ \Kcal_{*}(K|k) = \frac{\Kcal_{*}(K)}{\langle \Delta \rangle} \]
so that we can then apply Theorem~\ref{theorem:main_relative_Kcal}.
We describe the characterization of this kernel in the following proposition.

For the rest of this subsection, we will use the notation $\Ecal^{\perp}$ to denote the orthogonal of a subspace $\Ecal \subset \Gcal_{K}$ with respect to the pairing
\[ \Kcal_{K} \times \Gcal_{K} \to \Qbb. \]
As before, it should be clear from context when $\Ecal^{\perp}$ refers to a subspace of $\Kcal_{K}$ as opposed to a subgroup of $K^{\times}$.

\begin{proposition}\label{proposition:prime_subfield_characterization}
  Let $K$ be any field, and let $k$ denote the relative algebraic closure of the prime subfield of $K$.
  Assume that $\trdeg(K|k) \geq 2$.
  Put $\Delta := \ker(\Kcal_{K} \to \Kcal_{K|k})$.
  For a nonzero $t \in \Kcal_{K}$, write $\Hcal_{t}$ for the Milnor closure of the subset $\{t\}$.
  Let $\Vscr_{t}$ denote the collection of closed subspaces $\Dcal \subset \Gcal_{K}$ satisfying the following conditions:
  \begin{enumerate}
    \item One has $\Zcal_{K}(\Dcal) \neq \Dcal$.
    \item One has $\Ccal_{K}(\Zcal_{K}(\Dcal)) = \Dcal$.
    \item One has $\Hcal_{t} \cap \Dcal^{\perp} = 0$.
  \end{enumerate}
  Then one has
  \[ \Delta = \bigcap_{t \in \Kcal_{K} \smin \{0\}} \bigcap_{\Dcal \in \Vscr_{t}} \Zcal_{K}(\Dcal)^{\perp}. \]
\end{proposition}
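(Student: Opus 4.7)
The plan is to prove an absolute analogue of Theorem~\ref{theorem:recover_Kcal_geom_lattice}, taking the role of the base to be the prime subfield $k_0$ of $K$ and $T = \{1\}$. First, I apply Theorem~\ref{theorem:detect_visible_valuations} with trivial $T$: conditions (1) and (2) defining $\Vscr_t$ correspond exactly to visible valuations, so every $\Dcal \in \Vscr_t$ has the form $\Dcal = \Dcal_v$ for a unique visible valuation $v$ of $K$, with $\Zcal_K(\Dcal) = \Ical_v$ (hence $\Zcal_K(\Dcal)^\perp = \Ucal_v$), and condition (3) becomes $\Hcal_t \cap \Ucal_v^1 = 0$. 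The assertion then reduces to
\[ \Delta \;=\; \bigcap_{t \neq 0} \; \bigcap_{v} \Ucal_v, \]
where the inner intersection runs over visible valuations $v$ satisfying $\Hcal_t \cap \Ucal_v^1 = 0$.

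For each nonzero $t \in \Kcal_K$, fix a preimage $\tilde t \in K^\times$ and let $L_t$ be the relative algebraic closure of $k_0(\tilde t)$ in $K$; note $k \subset L_t$. By induction on the filtration defining the Milnor closure (with Lemma~\ref{lemma:detect_independence} supplying the inductive step), $\Hcal_t$ is contained in $\Kcal_{L_t}$. The key claim is that for a visible valuation $v$, the condition $\Hcal_t \cap \Ucal_v^1 = 0$ is equivalent to triviality of $v$ on $L_t$. For the easy direction, if $v|_{L_t}$ is trivial then $L_t^\times \hookrightarrow Kv^\times$ is injective, which after tensoring with $\Qbb$ yields $\Kcal_{L_t} \cap \Ucal_v^1 = 0$, hence $\Hcal_t \cap \Ucal_v^1 = 0$. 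For the converse---the crux, paralleling the corresponding step in Theorem~\ref{theorem:recover_Kcal_geom_lattice}---assume $v$ is nontrivial on $L_t$, hence on $k_0(\tilde t)$; using Lemma~\ref{lemma:milnor_closure_description}, the preimage $H_t$ contains $\tilde t$ together with all expressions $a + b \cdot h$ for $h \in H_t$ nontorsion and $a, b$ torsion in $K^\times$. This is enough to produce an element $h \in H_t$ with $v(h) > 0$ (playing the role of the identity $k(H)^\times = H$ in the relative proof), after which $1 + h \in \Urm_v^1 \cap H_t$ provides, upon varying $h$ to ensure $1+h$ is nontorsion in $K^\times$, a nonzero element of $\Hcal_t \cap \Ucal_v^1$.

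Granted the key claim, the inner intersection becomes the intersection of $\Ucal_v$ over visible valuations trivial on $L_t$. The inclusion $\Kcal_{L_t} \subset \bigcap_v \Ucal_v$ is immediate, and for the reverse one adapts the construction from Theorem~\ref{theorem:recover_Kcal_geom_lattice}: given $x \notin \Kcal_{L_t}$, take a preimage transcendental over $L_t$, complete it to a transcendence basis of $K | L_t$, and form the associated discrete valuation, verifying visibility via Proposition~\ref{proposition:visible_of_trdeg_ge_one}. Finally, I show $\bigcap_{t \neq 0} \Kcal_{L_t} = \Delta$: the inclusion $\supset$ holds since $k \subset L_t$ for every $t$, while for the reverse, given $y \notin \Delta$, any preimage $\tilde y$ is transcendental over $k_0$ (otherwise $\tilde y \in k$ and $y \in \Kcal_k = \Delta$); invoking $\trdeg(K|k_0) \geq \trdeg(K|k) \geq 2$, pick $\tilde t \in K$ transcendental over $k_0$ and algebraically independent from $\tilde y$, so that $\tilde y \notin L_t$ and $y \notin \Kcal_{L_t}$.

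The main obstacle is the crux in the absolute setting. Lemma~\ref{lemma:milnor_closure_description} here only allows $a, b$ in the torsion subgroup of $K^\times$, which is strictly weaker than its relative analogue where $a, b$ range over $k^\times$; consequently the identity $k_0(H_t)^\times = H_t$ of the relative proof need not hold. To compensate, one must carefully construct a witness $h \in H_t \cap \mfrak_v$ from iterated torsion-coefficient polynomial manipulations of $\tilde t$, and then further adjust $h$ (for instance by replacing it with $h \cdot \tilde t^n$ or $h^n$ for suitable $n$) to ensure $1 + h$ is nontorsion in $K^\times$, ruling out the possibility that all candidate witnesses $1 + h'$ happen to be roots of unity.
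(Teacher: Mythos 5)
The high-level framing is right: you invoke Theorem~\ref{theorem:detect_visible_valuations} to translate conditions (1)--(2) into visible valuations, and you correctly reduce to studying when $\Hcal_t \cap \Ucal_v^1 = 0$. You also correctly compute $\bigcap_{t} \Kcal_{L_t} = \Kcal_k = \Delta$. However, your proposal aims at a \emph{more ambitious} intermediate statement than the paper proves --- that the inner intersection equals $\Kcal_{L_t}$ exactly --- and you acknowledge yourself that the ``crux'' needed for that statement is left unresolved. The paper sidesteps this entirely: for the inclusion $\Delta \subset (\text{RHS})$, it does not try to show that the relevant valuations are trivial on $L_t$, only that they are trivial on $k$, which is a much weaker (and sufficient) target. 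In characteristic $p$, $k^\times$ is the entire torsion subgroup so $\Kcal_*(K) = \Kcal_*(K|k)$ and Theorem~\ref{theorem:recover_Kcal_geom_lattice} applies directly; the work is all in characteristic zero, where triviality on $k$ reduces to triviality on $\Qbb$.

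The genuine missing idea is the paper's argument that $2 \in H_t$ for \emph{every} non-root-of-unity $t$. This is achieved by a complex-embedding argument: replace $t$ by a power $t^n$ so that $1+t^n$ and $(2+t^n)/t^n$ are also not roots of unity, and then chain the operations of Lemma~\ref{lemma:milnor_closure_description} to land on $2 = t^n \cdot \bigl((2+t^n)/t^n - 1\bigr) \in H_t$. Once $2 \in H_t$, one gets $\{2,3,4,\ldots\} \subset H_t$ and hence $\Qbb^\times \subset H_t$; then for any $\Dcal \in \Vscr_t$ whose visible valuation $v$ restricts to the $p$-adic valuation on $\Qbb$, the element $1+p$ lies in $H_t \cap \Urm_v^1$ and is non-torsion, contradicting condition (3). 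Your sketch (``iterated torsion-coefficient polynomial manipulations of $\tilde t$'') is pointed at this, but without the observation that the torsion-coefficient operations suffice to manufacture $2$, the argument cannot close. (Incidentally, once $\Qbb^\times \subset H_t$ is known, your identity $k_0(H_t)^\times = H_t$ \emph{would} follow by the same argument as in Theorem~\ref{theorem:recover_Kcal_geom_lattice}, so your plan is not wrong so much as it presupposes the hard step.)

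There is also a secondary gap in your ``easy direction'': you assert $\Hcal_t \subset \Kcal_{L_t}$ by induction via Lemma~\ref{lemma:detect_independence}, but that lemma, applied with $T = \{1\}$ and base the prime field $k_0$, only yields algebraic dependence over $k_0$, which is vacuous whenever the non-torsion element $s$ in the current stage of the Milnor closure is itself algebraic over $k_0$ (possible in characteristic zero). To handle that case one needs a separate argument, e.g.\ a tame-symbol computation, or one should base the induction on $k$ itself, using the map $\Kcal_*(K) \to \Kcal_*(K|k)$ and treating elements of $\Kcal_k$ specially.
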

\begin{proof}
  Note that $\Hcal_{t}$ remains unchanged if we replace $t$ by $n \cdot t$ for any nonzero integer $n$.
  Thus, we may restrict our attention to those nonzero $t \in \Kcal_{K}$ which are in the image of $K^{\times}$.
  Note also that the kernel of $K^{\times} \to \Kcal_{K}$ is the torsion subgroup of $K^{\times}$.

  If $K$ has positive characteristic then $k^{\times}$ is the torsion of $K^{\times}$ hence $\Kcal_{*}(K) = \Kcal_{*}(K|k)$.
  Letting $t \in K \smin k$ be given, we deduce from Theorem~\ref{theorem:recover_Kcal_geom_lattice} that the intersection
  \[ \bigcap_{\Dcal \in \Vscr_{t}} \Zcal_{K}(\Dcal)^{\perp} \]
  is precisely the geometric subspace $\Kcal_{L|k}$ where $L$ is the relative algebraic closure of $k(t)$ in $K$.
  As $t$ varies, the intersection of all these geometric subspaces is trivial, so the assertion of the proposition follows.

  Assume for the rest of the proof that $K$ has characteristic zero.
  Let $\Dcal$ be a subspace satisfying conditions (1) and (2).
  By Theorem~\ref{theorem:detect_visible_valuations}, we see that $\Dcal = \Dcal_{v}$ for some visibile (i.e.~$\{1\}$-visible) valuation $v$ of $K$, and that $\Ical_{v} = \Zcal_{K}(\Dcal)$.

  Let $t$ be an element of $K^{\times}$ which is not a root of unity, and let $H_{t}$ denote the preimage of $\Hcal_{\bar t}$ in $K^{\times}$, where $\bar t$ denotes the image of $t$ in $\Kcal_{K}$.
  Assume that such a $\Dcal$ also satisfies condition (3) for this $\bar t$.
  We claim that the $v$ mentioned above must be trivial on $\Qbb$ hence also on $k$.
  For this, it suffices to show that $2 \in H_{t}$.
  Indeed, in this case we would find that $\{2,3,\ldots\} \subset H_{t}$ since $\Hcal_{\bar t}$ is Milnor-closed, using Lemma~\ref{lemma:milnor_closure_description}.
  Thus, if $v$ is nontrivial on $\Qbb$, and $p$ is the prime for which $v|_{\Qbb}$ is the $p$-adic valuation, we would have $1 + p \in H_{t} \cap \Urm_{v}^{1}$, while $\Dcal^{\perp} = \Ucal_{v}^{1}$, hence the image of $1+p$ would be a nontrivial element of $\Hcal_{\bar t} \cap \Dcal^{\perp}$.

  If $t^{n} = -2$ for some nonzero integer $n$, then we have $2 \in H_{t}$, so we may assume this is not the case.
  By replacing $t$ with $t^{n}$ for some nonzero integer $n$ (recall that this doesn't change $H_{t}$), we may assume that $1+t$ and $(2+t)/t$ are also not roots of unity (note that our assumptions ensure that $1+t^{n}$ and $(2+t^{n})/t^{n}$ are both nonzero for any nonzero integer $n$).
  Indeed, since $t$ is not a root of unity, there exists a complex embedding $\sigma : \Qbb(t) \to \Cbb$ such that $|\sigma(t)| \neq 1$, where $\Qbb(t)$ refers to the subfield of $K$ generated by $t$.
  Replacing $t$ with $t^{-1}$ if needed, assume that $|\sigma(t)| > 1$ hence $|\sigma(t^{n})| \to \infty$ as $n \to \infty$ and thus $1+t^{n}$ is not a root of unity for all sufficiently large $n$.
  Replace $t$ with such a $t^{n}$ with $n$ positive, and note that $t^{n \cdot k}$ would have also worked for any positive $k$.

  Now if $(2+t)/t$ is a root of unity, then $|2 + \sigma(t)| = |\sigma(t)|$ which forces $\sigma(t) = -1 + b \cdot i$ for some nonzero real $b$.
  If $(2+t^{2})/t^{2}$ is also a root of unity then $\sigma(t^{2})$ must have real part $-1$ as well, hence $-1 = 1 - b^{2}$ so that $b = \pm\sqrt{2}$.
  But in this case the real part of $\sigma(t^{3})$ is $5$, so that $(2 + t^{3})/t^{3}$ cannot be a root of unity.

  Having made this replacement, we now use Lemma~\ref{lemma:milnor_closure_description} repeatedly to find that $1 + t \in H_{t}$, $2 + t \in H_{t}$, $(2+t)/t \in H_{t}$ and $(2+t)/t-1 \in H_{t}$ as well.
  Thus, $2 = t \cdot ((2+t)/t - 1) \in H_{t}$.
  In any case, we have obtained that $2 \in H_{t}$, which implies that $v$ is trivial on $k$ as noted above.

  We deduce that $\Kcal_{k}$ is indeed contained in the intersection in question.
  On the other hand, if $t$ is transcendental over $k$, then we may extend $t$ to a transcendence base
  \[ \Bcal = \{t\} \cup \Bcal_{0} \]
  for $K|k$.
  Put $L := k(\Bcal_{0})$ and $M = k(\Bcal) = L(t)$.
  Let $v$ be an extension of the $t$-adic valuation on $M$ to $K$ and note that $v$ is visible by Proposition~\ref{proposition:visible_of_trdeg_ge_one} and our assumption on $\trdeg(K|k)$.
  However, the image of $t$ is not contained in $\Ucal_{v} = \Zcal_{K}(\Dcal_{v})^{\perp}$.
  On the other hand, letting $s \in \Bcal_{0}$ be any element (which is nonempty by our assumption on $\trdeg(K|k)$), we have $\Dcal_{v} \in \Vscr_{\bar s}$ where $\bar s$ is the image of $s$, by arguing as in the proof of Theorem~\ref{theorem:recover_Kcal_geom_lattice}.
  Thus, the image of $t$ is not contained in the intersection in question, and so the assertion of the proposition follows.
\end{proof}

\begin{proof}[Proof of Theorem~\ref{theorem:main_absolute_Kcal}]
  Let $k$ denote the relative algebraic closure of the prime subfield of $K$, and $l$ the relative algebraic closure of the prime subfield of $L$.
  The algebras $\Kcal_{*}(K)$ and $\Kcal_{*}(L)$ are isomorphic, while $K$ has absolute transcendence degree $\geq 5$.
  Such an isomorphism induces an isomorphism of structures $\Ascr(K) \cong \Ascr(L)$ by Fact~\ref{fact:alternating_iff_milnor_condition}, hence also an isomorphism $\Gcal_{K} \cong \Gcal_{L}$ which is compatible with alternating pairs.

  By Lemma~\ref{lemma:exists_subspace_of_trdeg_gt}, there exists a closed subspace $\Dcal \subset \Gcal_{K|k} \subset \Gcal_{K}$ such that $\Zcal_{K}(\Dcal) \neq \Dcal$ and $4 \le \dim(\Zcal_{K}(\Dcal))$.
  Transferring $\Dcal$ across $\Gcal_{K} \cong \Gcal_{L}$, we obtain a closed subspace $\Dcal' \subset \Gcal_{L}$ such that $\Zcal_{L}(\Dcal') \neq \Dcal'$ and $4 \le \dim(\Zcal_{L}(\Dcal'))$.
  By Proposition~\ref{proposition:centre_valuative}, $\Ical := \Zcal_{L}(\Dcal')$ is valuative, and if $v$ denotes the valuation associated to $\Ical$, it follows that $\Qbb \otimes_{\Zbb} vL$ has dimension $\geq 4$.
  Since $l$ the algebraic over a prime field, it follows that $\Qbb \otimes_{\Zbb} vL/vl$ has dimension $\geq 3$, and thus $3 \le \trdeg(L|l)$ by Abhyankar's inequality.

  In any case, we may thus apply Proposition~\ref{proposition:prime_subfield_characterization} to both $K$ and $L$.
  This shows that the isomorphism $\phi_{1} : \Kcal_{K} \cong \Kcal_{L}$ sends $\Kcal_{k}$ to $\Kcal_{l}$, and thus $\phi_{*}$ descends to an isomorphism
  \[ \bar\phi_{*} : \Kcal_{*}(K|k) = \frac{\Kcal_{*}(K)}{ \langle \Kcal_{k} \rangle} \cong \frac{\Kcal_{*}(L)}{\langle \Kcal_{l} \rangle} = \Kcal_{*}(L|l). \]
  By Theorem~\ref{theorem:main_relative_Kcal}, we obtain an isomorphism of fields $K^{i} \cong L^{i}$ and a rational number $r \in \Qbb^{\times}$ satisfying the conditions of our theorem.
\end{proof}

\subsection{The finitely-generated case}

In the case where $K|k$ is finitely generated, and $k$ is perfect, we use the work of \cite{zbMATH07463742} to obtain a better result.

\begin{theorem}\label{theorem:fg_case_relative}
  Let $k$ be a perfect field and $K$ a regular function field over $k$ of transcendence degree $\geq 2$.
  Then $K|k$ are determined, up-to isomorphism, from $\KM_{*}(K|k)$.
  More precisely, if $L|l$ is another regular function field over a perfect field of transcendence degree $\geq 2$ and $\phi_{*} : \KM_{*}(K|k) \cong \KM_{*}(L|l)$ is an isomorphism, then there exists an isomorphism $\Phi : K \cong L$ of fields satisfying $\Phi(k) = l$ and some $\epsilon \in \{\pm 1\}$ such that $\epsilon \cdot \phi_{1}$ is the isomorphism $K^{\times}/k^{\times} \cong L^{\times}/l^{\times}$ induced by $\Phi$.
  If $\trdeg(K|k) \geq 3$, then the assumption on $\trdeg(L|l)$ can be dropped.
\end{theorem}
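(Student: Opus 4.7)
The plan is to combine the geometric-lattice reconstruction of Theorem~\ref{theorem:recover_Kcal_geom_lattice} with the reconstruction theorem of Cadoret--Pirutka~\cite[Theorem 4]{zbMATH07463742}, which was designed precisely for finitely generated regular function fields. First, tensoring $\phi_{*}$ with $\Qbb$ produces an isomorphism of $\Qbb$-algebras $\bar\phi_{*} : \Kcal_{*}(K|k) \cong \Kcal_{*}(L|l)$. Since $\trdeg(K|k) \geq 2$, Theorem~\ref{theorem:recover_Kcal_geom_lattice} applies on the $K|k$ side; when $\trdeg(K|k) \geq 3$, Lemma~\ref{lemma:le_trdeg_of_iso_of_lt_trdeg} forces $\trdeg(L|l) \geq 2$ as well, and otherwise this is part of the hypothesis. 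Thus the theorem applies symmetrically and, together with Proposition~\ref{proposition:geometric_lattice_iso}, produces a lattice isomorphism $\psi : \Gbb(K|k) \cong \Gbb(L|l)$ satisfying $\bar\phi_{1}(\Kcal_{M|k}) = \Kcal_{\psi(M)|l}$ for every $M \in \Gbb(K|k)$.

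Translating this back from the $\Qbb$-tensored picture to integral Milnor K-theory, the degree-one isomorphism $\phi_{1} : K^{\times}/k^{\times} \cong L^{\times}/l^{\times}$ identifies the subgroup $M^{\times}/k^{\times}$ with $\psi(M)^{\times}/l^{\times}$ for every $M \in \Gbb(K|k)$, and in particular carries algebraically dependent pairs over $k$ to algebraically dependent pairs over $l$. Because $K|k$ is regular and $k$ is perfect, $k$ is relatively algebraically closed in $K$ and $K|k$ is separable; consequently every root of unity of $K$ lies in $k$, so $K^{\times}/k^{\times}$ is torsion-free, and the same holds for $L^{\times}/l^{\times}$. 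This torsion-freeness reduces the natural scaling indeterminacy from $\Qbb^{\times}$ down to $\{\pm 1\}$, which explains the shape of the conclusion.

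At this point I appeal directly to Cadoret--Pirutka~\cite[Theorem 4]{zbMATH07463742}, whose reconstruction theorem takes as input exactly an isomorphism $K^{\times}/k^{\times} \cong L^{\times}/l^{\times}$ between the multiplicative groups of such regular function fields that preserves algebraic dependence (equivalently, that is compatible with the vanishing of symbols in $\KM_{2}$), and outputs a field isomorphism $\Phi : K \cong L$ with $\Phi(k) = l$, unique up to a sign twist. Feeding $\phi_{1}$ into their theorem yields the desired $\Phi$ and $\epsilon \in \{\pm 1\}$ such that $\epsilon \cdot \phi_{1}$ is the map $K^{\times}/k^{\times} \cong L^{\times}/l^{\times}$ induced by $\Phi$.

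The main obstacle will be verifying that the hypotheses of \cite[Theorem 4]{zbMATH07463742} are met exactly as formulated there; in particular, one must confirm that the algebraic-dependence preserving map $\phi_{1}$ extracted from $\phi_{*}$ fits into their framework (whether they phrase their assumptions in terms of the geometry of $\Gbb(K|k)$, of the Steinberg relations, or both). A secondary bookkeeping subtlety is the borderline case $\trdeg(K|k) = 2$, where Lemma~\ref{lemma:le_trdeg_of_iso_of_lt_trdeg} produces only $\trdeg(L|l) \geq 1$, forcing the extra hypothesis on $\trdeg(L|l)$ in the statement.
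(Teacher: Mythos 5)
Your proof is correct and follows essentially the same route as the paper: tensor with $\Qbb$, invoke Theorem~\ref{theorem:recover_Kcal_geom_lattice} to get that $\phi_{1}$ is compatible with algebraic dependence, then hand off to Cadoret--Pirutka~\cite[Theorem 4]{zbMATH07463742}, using a transcendence-degree lemma to discharge the hypothesis on $L|l$ when $\trdeg(K|k) \geq 3$. The only divergence is cosmetic: you invoke Lemma~\ref{lemma:le_trdeg_of_iso_of_lt_trdeg} with $d = 2$ rather than Lemma~\ref{lemma:trdeg_eq_of_iso} (either works), and you spell out the torsion-freeness of $K^{\times}/k^{\times}$ and $L^{\times}/l^{\times}$ needed to descend the $\Qbb$-level lattice statement back to an integral statement about $\phi_{1}$ -- a step the paper leaves implicit.
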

\begin{proof}
  The isomorphism $\phi_{1} : K^{\times}/k^{\times} \cong L^{\times}/l^{\times}$ is compatible with algebraic dependence by Theorem~\ref{theorem:recover_Kcal_geom_lattice}.
  Thus the claim follows from~\cite[Theorem 4]{zbMATH07463742}.
  In the case where $\trdeg(K|k) \geq 3$, the fact that $\trdeg(L|l) \geq 2$ follows from Lemma~\ref{lemma:trdeg_eq_of_iso}.
\end{proof}

\begin{theorem}\label{theorem:fg_case_absolute}
  Let $K$ be a finitely-generated field whose absolute transcendence degree is at least two.
  Then the isomorphism type of $K$ is determined by $\KM_{*}(K)$.
  More precisely, suppose that $L$ is any other finitely-generated field of absolute transcendence degree $\geq 2$, $\phi_{*} : \KM_{*}(K) \cong \KM_{*}(L)$ is an isomorphism, $k$ denotes the relative algebraic closure of the prime subfield of $K$ and $l$ the relative algebraic closure of the prime subfield of $L$.
  Then $\phi_{*}$ induces an isomorphism
  \[ \bar\phi_{*} : \KM_{*}(K|k) \cong \KM_{*}(L|l), \]
  and there exists an isomorphism of fields $\Phi : K \cong L$ and some $\epsilon \in \{\pm 1\}$, such that $\epsilon \cdot \bar\phi_{1} : K^{\times}/k^{\times} \cong L^{\times}/l^{\times}$ is the isomorphism induced by $\Phi$.
  If $K$ has absolute transcendence degree $\geq 4$, then the assumption on the transcendence degree of $L$ can be dropped.
\end{theorem}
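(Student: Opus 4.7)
My plan is to mirror the architecture of the proof of Theorem~\ref{theorem:main_absolute_Kcal}, but in the final step invoke Theorem~\ref{theorem:fg_case_relative} (and hence the Cadoret--Pirutka reconstruction) rather than Theorem~\ref{theorem:main_relative_Kcal}. I would begin by base-changing to obtain an isomorphism of $\Qbb$-algebras $\phi_*^{\Qbb} : \Kcal_*(K) \cong \Kcal_*(L)$, which induces an isomorphism of structures $\Ascr(K) \cong \Ascr(L)$ via Fact~\ref{fact:alternating_iff_milnor_condition}.

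To apply Proposition~\ref{proposition:prime_subfield_characterization} on both sides, one needs $\trdeg(K|k) \geq 2$ and $\trdeg(L|l) \geq 2$. Under the first hypothesis of the theorem, both are given. Under the stronger hypothesis $\trdeg(K) \geq 4$, I would derive $\trdeg(L|l) \geq 2$ exactly as in the proof of Theorem~\ref{theorem:main_absolute_Kcal}: use Lemma~\ref{lemma:exists_subspace_of_trdeg_gt} with $d = 3$ to produce a closed subspace $\Dcal \subset \Gcal_{K|k} \subset \Gcal_K$ with $\dim_\Qbb \Zcal_K(\Dcal) \geq 3$, transport it across $\Gcal_K \cong \Gcal_L$, extract a valuation $v$ of $L$ via Proposition~\ref{proposition:centre_valuative}, and conclude using Abhyankar's inequality together with the observation that any valuation of $l$ has rational rank at most one (since $l$ is algebraic over a prime field).

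Next, Proposition~\ref{proposition:prime_subfield_characterization} gives a characterization of $\ker(\Kcal_K \to \Kcal_{K|k})$ purely in terms of the structure $\Ascr(K)$, hence $\phi_1^{\Qbb}$ sends this kernel onto the analogous kernel for $L$. To lift this to an identification of the multiplicative groups themselves, I would use the fact that $k^\times$ is saturated in $K^\times$: since $k$ is relatively algebraically closed in $K$ and contains every root of unity of $K$ (all roots of unity being algebraic over the prime field), any $x \in K^\times$ with $x^n \in k^\times$ lies in $k$, so $K^\times/k^\times$ is torsion-free and the canonical map $K^\times/k^\times \hookrightarrow \Kcal_{K|k}$ is injective. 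Consequently $k^\times$ is exactly the preimage of $\ker(\Kcal_K \to \Kcal_{K|k})$ in $K^\times$, and the analogous statement holds for $l^\times \subset L^\times$. Therefore $\phi_1(k^\times) = l^\times$, so $\phi_*$ carries $\langle k^\times \rangle$ to $\langle l^\times \rangle$ and descends to the promised isomorphism $\bar\phi_* : \KM_*(K|k) \cong \KM_*(L|l)$.

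Finally, since $k$ is perfect (being algebraic over the prime field and relatively algebraically closed in $K$) and $K|k$ is regular for the same reason, with an identical picture for $L|l$, I can apply Theorem~\ref{theorem:fg_case_relative} to $\bar\phi_*$; this yields the field isomorphism $\Phi : K \cong L$ with $\Phi(k) = l$ and the sign $\epsilon \in \{\pm 1\}$ realizing $\epsilon \cdot \bar\phi_1$ as the induced isomorphism $K^\times/k^\times \cong L^\times/l^\times$. No step looks genuinely new here --- the deep content is already absorbed into Proposition~\ref{proposition:prime_subfield_characterization} and Theorem~\ref{theorem:fg_case_relative}. The only delicate bookkeeping point I anticipate is the saturation argument above: one must check that all roots of unity of $K$ lie in $k$, so that the characterization from Proposition~\ref{proposition:prime_subfield_characterization} (which a priori only detects a $\Qbb$-subspace) lets us recover $k^\times$ itself rather than merely its saturation.
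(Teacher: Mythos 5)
Your proposal is correct and takes essentially the same route as the paper's own (very terse) proof: identify $\Kcal_k$ inside $\Kcal_K$ via Proposition~\ref{proposition:prime_subfield_characterization}, lift to $k^\times \subset K^\times$ using relative algebraic closure (plus the fact that all roots of unity lie in $k$), and hand off to Theorem~\ref{theorem:fg_case_relative}; the extra transcendence-degree bookkeeping under the weaker hypothesis $\trdeg(K)\geq 4$ is handled exactly as in Theorem~\ref{theorem:main_absolute_Kcal}, with $d=3$ rather than $d=4$. You actually spell out the saturation/torsion argument which the paper leaves implicit, but the structure is the same.
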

\begin{proof}
  Use the characterization of $\Kcal_{k}$ in $\Kcal_{K}$ from Proposition~\ref{proposition:prime_subfield_characterization} and the fact that the preimage of $\Kcal_{k}$ in $K^{\times}$ is $k^{\times}$ to observe that $\phi_{1} : K^{\times} \cong L^{\times}$ sends $k^{\times}$ to $l^{\times}$.
  Conclude by using Theorem~\ref{theorem:fg_case_relative}.
  If $\trdeg(K|k) \geq 4$, we may argue as in the proof of Theorem~\ref{theorem:main_absolute_Kcal} to see that $L$ has absolute transcendence degree $\geq 2$.
\end{proof}

\bibliographystyle{amsplain}
\bibliography{refs.bib}

\end{document}